%
\documentclass[12pt, reqno]{amsart}
\usepackage{amsmath, amsthm, amscd, amsfonts, amssymb, graphicx, color, mathrsfs}
\usepackage[bookmarksnumbered, colorlinks, plainpages]{hyperref}
\usepackage[all]{xy}
\usepackage{slashed}
\usepackage{tipa}
\usepackage{mathabx}
\usepackage{soul}
\usepackage{cancel}
\usepackage{ulem}
\usepackage{inputenc}
\usepackage{booktabs}

\textheight 22.5truecm \textwidth 14.5truecm
\setlength{\oddsidemargin}{0.35in}\setlength{\evensidemargin}{0.35in}

\setlength{\topmargin}{-.5cm}

\newtheorem{theorem}{Theorem}[section]
\newtheorem{lemma}[theorem]{Lemma}

\newtheorem{proposition}[theorem]{Proposition}
\newtheorem{corollary}[theorem]{Corollary}
\theoremstyle{definition}
\newtheorem{definition}[theorem]{Definition}

\theoremstyle{remark}
\newtheorem{remark}[theorem]{Remark}
\numberwithin{equation}{section}

\newtheorem{assump}[theorem]{Assumption}

\def\ind{{\mathcal I}}
\def\sL{{\star_{L}}}
\def\sLs{\widetilde{\star}_{L}}

\newcounter{quotecount}
\newcommand{\MyQuote}[2]{\bigskip (#2)\hspace*{1cm}
\vspace{1cm}\addtocounter{quotecount}{1}%
     \parbox{12cm}{ #1}}

\begin{document}
\setcounter{page}{1}

\title[ Global Functional calculus, lower/upper bounds  and... ]{  Global Functional calculus, lower/upper bounds  and evolution equations on manifolds with boundary}

\author[D. Cardona]{Duv\'an Cardona}
\address{
  Duv\'an Cardona S\'anchez:
  \endgraf
  Department of Mathematics: Analysis, Logic and Discrete Mathematics
  \endgraf
  Ghent University, Belgium
  \endgraf
  {\it E-mail address} {\rm duvanc306@gmail.com, Duvan.CardonaSanchez@UGent.be}
  }

\author[V. Kumar]{Vishvesh Kumar}
\address{
 Vishvesh Kumar:
  \endgraf
  Department of Mathematics: Analysis, Logic and Discrete Mathematics
  \endgraf
  Ghent University, Belgium
  \endgraf
  {\it E-mail address} {\rm Kumar.Vishvesh@UGent.be}
  }

\author[M. Ruzhansky]{Michael Ruzhansky}
\address{
  Michael Ruzhansky:
  \endgraf
  Department of Mathematics: Analysis, Logic and Discrete Mathematics
  \endgraf
  Ghent University, Belgium
  \endgraf
 and
  \endgraf
  School of Mathematical Sciences
  \endgraf
  Queen Mary University of London
  \endgraf
  United Kingdom
  \endgraf
  {\it E-mail address} {\rm Michael.Ruzhansky@ugent.be}
  }

\author[N. Tokmagambetov]{Niyaz Tokmagambetov}
\address{
 Niyaz Tokmagambetov:
  \endgraf
  Department of Mathematics: Analysis, Logic and Discrete Mathematics
  \endgraf
  Ghent University, Belgium
  \endgraf
  {\it E-mail address} {\rm Niyaz.Tokmagambetov@UGent.be}
  }

\subjclass[2010]{Primary {22E30; Secondary 58J40}.}

\keywords{pseudo-differential operators, boundary value problems, global analysis}

\thanks{The  authors were supported by the FWO Odysseus 1 grant G.0H94.18N: Analysis and Partial Differential Equations. The third author was also supported in parts by  EPSRC grant
EP/R003025/1.}

\begin{abstract}
Given a smooth  manifold $M$ (with or without boundary), in this paper we establish a global functional calculus (without the standard assumption that the operators are classical pseudo-differential operators) and  the G\r{a}rding inequality for global pseudo-differential operators associated with boundary value problems. The analysis that we follow is free of local coordinate systems. Applications of the G\r{a}rding inequality to the global solvability for a class of evolution problems are also considered. 
\end{abstract} \maketitle

\tableofcontents
\allowdisplaybreaks

\section{Introduction}
 
Let  $M=\overline{\Omega}$ be a smooth manifold (with or without boundary). This work deals with the $L^2$-theory for the pseudo-differential calculus associated with a boundary-value problem $L_{\Omega},$ determined by a pseudo-differential operator $L$ on $\Omega$ with discrete spectrum, having suitable boundary conditions, in the framework of the non-harmonic analysis developed by the last two authors in \cite{RT2016,RT2017} and \cite{DRT2017}. To be precise, we will formulate a (Dunford-Riesz) global functional calculus for the $(\rho,\delta)$-H\"ormander classes associated to $L_{\Omega}$.  Once established, the functional calculus will be applied to the proof of a  global   G\r{a}rding inequality on $M,$ and in establishing the $L^2$-boundedness for the $(\rho,\delta)$-class of order zero of this calculus. Finally, we will use the $L^2$-theory developed, applying it to the global solvability for a class of  evolution problems on $\Omega,$ associated with (possibly time-dependent) $L$-strongly elliptic pseudo-differential operators on $M,$ (which is the class of elliptic operators in the calculus determined by $L_\Omega$).

In their seminal work \cite{KohnNirenberg1965}, J.  Kohn and L. Nirenberg  introduced a calculus of pseudo-differential operators for the classes $S^{m}_{1,0}(\mathbb{R}^n\times \mathbb{R}^n),$\footnote{which consists of all smooth functions $a$  satisfying $|\partial_{x}^\beta\partial_{\xi}^\alpha a(x,\xi)|=O(1+|\xi|)^{m-|\alpha|},$ when $|\xi|\rightarrow\infty.$} which was applied by  K.  K. O. Friedrichs and 
P. D. Lax in their classical work \cite{PeterLax} to study  boundary value problems of first order. The appearance of both works,  \cite{KohnNirenberg1965} and \cite{PeterLax}, in the same volume of Comm.  Pure  Appl. Math. shows an immediate profound impact of the  Kohn-Nirenberg calculus of pseudo-differential operators in the development of the solvability theory of partial differential operators.  Nevertheless, the pseudo-differential technique (which means, to solve problems in mathematics using microlocal methods), appeared before 1959 in the classical works of Mihlin, and in the theory of singular integrals  developed by Calder\'on and Zygmund approximating inverses of elliptic operators, and in 1959  in Calderon's proof  of Cauchy uniqueness for a wide  class of principal type operators, using a pseudo-differential factorization to prove a Carleman estimates \cite{Carleman}. 

Generalising  the Kohn-Nirenberg classes $S^m(\mathbb{R}^n\times \mathbb{R}^n),$ L. H\"ormander introduced in 1967, the classes $S^m_{\rho,\delta}(\mathbb{R}^n\times \mathbb{R}^n)$\footnote{which consists of all smooth functions $a$  satisfying $|\partial_{x}^\beta\partial_{\xi}^\alpha a(x,\xi)|=O(1+|\xi|)^{m-\rho|\alpha|+\delta|\beta|},$ when $|\xi|\rightarrow\infty.$}, $0\leq \delta<\rho\leq 1,$ motivated by construction of the parametrix $P$ of the heat operator $$\partial_t-\Delta_x$$ which has symbol $\sigma_P$ in the class $S^{-1}_{\frac{1}{2},\frac{1}{2}}(\mathbb{R}^n\times \mathbb{R}^n).$
At the same time,  in \cite{seeley} R. T. Seeley, developed the asymptotic expansions for the symbols of the complex powers of elliptic operators. Therefore, Seeley have tackled one of the fundamental problems of the functional calculus of pseudo-differential operators.  This can be regarded as  the first model for a functional calculus of pseudo-differential operators. The theory of pseudo-differential operators got a major breakthrough when they proved to be worthwhile in the proof of the seminal work of Atiyah and Singer on the index theory of elliptic operators on compact manifolds \cite{ASinger}. The ideas developed by Seeley for functional calculus \cite{seeley} have been carried forward by several researchers. We cite \cite{Shubin76, Shubin78, Beals, Buzanonikola,MeladzeShubin, CSS, KumanoTsutsumi73,Loya2003,Loya2003a,Sch86,Sch88,Ruzwirth} to mention a very few of them. In particular, in \cite{Shubin78}, the author has investigated only the case of complex powers of differential operators, in \cite{Beals} the structure of the inverse of an elliptic operator, which is also considered as the second fundamental problem of the functional calculus, has been examined by using different techniques  of Seeley.  In \cite{KumanoTsutsumi73} the formal theory of complex powers was developed by Kumano-go and Tsutsumi  using the similar methods as in \cite{seeley}, in \cite{MeladzeShubin} they developed functional calculus on connected unimodular Lie groups. On the other hand, the functional calculus for pseudo-differential operators on the manifolds with certain geometry (on the boundary or on the manifold itself) was studied in \cite{CSS,Sch86,Sch88,Loya2003,Loya2003a}. For example, Coriasco, Schrohe and Seiler \cite{CSS} looked over the bounded imaginary powers of differential operators on manifolds with conical singularities, Schrohe \cite{Sch88, Sch88} analysed the complex powers on noncompact manifolds and manifolds with fibered boundaries, Loya \cite{Loya2003} explored the manifolds with canonical singularity, where the author used the heat kernels techniques \cite{Loya2003a} in \cite{Loya2003}.

In 2014, the third author with J. Wirth \cite{Ruzwirth} developed the global functional calculus for the elliptic pseudo-differential operators on compact Lie groups using the globally defined matrix symbols instead of representations in local coordinates, which is the version of the analysis well adopted to the operator theory on compact Lie groups. The global theory of symbols and their calculus was introduced and investigated in detail by the third author and V. Turunen in \cite{RT, Ruzhansky-Turunen:IMRN}. H\"ormander classes on compact Lie groups were investigated by Ruzhansky, Turunen and Wirth \cite{Ruzhansky-Turunen-Wirth:JFAA} providing the characterisation of operators in H\"ormander's classes $S^m_{1,0}$ on the compact Lie group viewed as a manifold was given in terms of these matrix symbols, thus providing a link between local and global symbolic calculi. Matrix valued symbols also proved to be important in the study of the $L^p$-multipliers problems on general compact Lie groups \cite{RuzwirthLp}. The functional calculus (complex powers of elliptic operators) has several important applications to index theory, evolution equations, $\zeta$-functions of an operator, Wodzicki-type (non-commutative) residues, G\r{a}rding inequalities \cite{Ruz-Tur-Sharp}. We refer to \cite{Buzanonikola, Shubin76} for several aspects of the functional calculus and extensive reviews of the above topics and to \cite{CardonaRuzhansky2020} for the functional calculus of subelliptic pseudo-differential operators on compact Lie groups.

In this paper we work in the setting of the Fourier analysis arising from the spectral decomposition of a model operator $L_{\Omega}$ on a smooth manifold $\overline{\Omega}$ (with or without boundary) (\cite{RT2016,RT2017}). In order to address the problem of the functional calculus from the view of H\"ormander symbolic calculus developed by the last two authors \cite{RT2016} in this setting, we first introduce in Section \ref{Parameterellipticity} the concept of parameter dependent $L$-ellipticity with respect to an analytic curve in the complex plane in the setting of nonharmonic analysis and examine its properties. In general, for the pseudo-differential operators on manifolds one puts some restrictions on the H\"ormander symbol classes $S^m_{\rho, \delta}$  \cite[Section 4]{Shubin78}, usually one requires $1-\rho<\delta<\rho,$ which in turn gives $\rho>\frac{1}{2}.$ It is worth noting that, in this paper, we allow $0\leq \delta<\rho \leq 1.$ This freedom on the condition on $\rho$ and $\delta$ enables us to handle some specific classes of operator, for example the resolvent of an $L$-elliptic symbol, or its complex powers which cannot be handled by the standard theory due the restriction $\rho>\frac{1}{2}.$  

After developing the global functional calculus for pseudo-differential operators in the nonharmonic analysis setting on manifolds, we present applications of functional calculus to the G\r{a}rding inequality, $L^2$-boundedess of pseudo-differential operators and global solvability of evolution problems of hyperbolic/parabolic type on compact manifolds.  We will now discuss each application separately in detail.

The G\r{a}rding inequality on $\mathbb{R}^n$ is a well-known and important inequality for pseudo-differential operators on $\mathbb{R}^n$ with $0\leq \delta <\rho \leq 1$ (\cite{Taylor}) and on manifolds with the restriction $1-\rho<\delta<\rho.$ For Lie groups, this was proved for the operators in the H\"ormander class $(1,0)$-type using the results developed by Langlands \cite{Langland} for the semigroups of Lie groups. The third author and Wirth \cite{Ruzwirth} have obtained it for operators on compact Lie groups with matrix valued symbols under the condition $0\leq \delta<\rho \leq 1,$ using the global functional calculus developed by them. In this paper we carry forward the ideas of \cite{Ruzwirth} to establish the G\r{a}rding inequality for operators with the global H\"ormander symbols \cite{RT} under the same condition $0\leq \delta<\rho \leq 1,$ using the global functional calculus developed for pseudo-differential operator in Section \ref{SFC}. 

The second application of the functional calculus is to prove the $L^2$-boundedness of global operators associated to the  symbol class $S^0_{\rho, \delta}(\overline{\Omega} \times \mathcal{I})$ with $0 \leq \delta <\rho \leq 1$ which is presented in Section \ref{L^2boun}.  In Section \ref{GST}, we present an application of the G\r{a}rding inequality and consequently, an application of the functional calculus to study the existence and uniqueness of Cauchy problems.

\section{Preliminaries: global pseudo-differential calculus  associated with boundary value problems}\label{Sect2} 

Let $M=\overline{\Omega}$ be a  $C^\infty$-manifold with (possibly empty) boundary $\partial \Omega.$ Let us formulate some basics  of the non-harmonic analysis and the pseudo-differential calculus developed by the third and the fourth author in \cite{RT2016} (see also \cite{CardVishTokRuzI, DRT2017}): \begin{itemize}
    \item {\it{  Consider
a pseudo-differential operator $L:=L_{\Omega}$  of order $m$ 
on a smooth manifold ${\Omega}$ (in the sense of H\"ormander \cite{Hormander1985III}) equipped with some boundary
conditions \textnormal{(BC)} defining a  space of functions endowed with a complex structure of  vector space. We  will also assume the condition \textnormal{(BC+)}, which states that the boundary conditions define a closed topological space.\footnote{The assumption \textnormal{(BC)} may be reformulated by saying that the domain ${\rm Dom}(L)$ of the operator
$L$ is linear, and the condition (BC+) by saying that ${\rm Dom}(L)$ and ${\rm Dom}(L^*)$
are closed in the topologies of $C_{L}^\infty(\overline{\Omega})$ and  $C_{{L}^*}^\infty(\overline{\Omega})$, 
respectively, with the latter spaces and their topologies introduced in Definition \ref{TestFunSp}. }}}
\item {\it{  
The pseudo-differential operator ${L}_\Omega$ is assumed to have a  discrete spectrum
$\{\lambda_{\xi}\in\mathbb C: \, \xi\in\ind\}$
on $L^{2}(\Omega)$,
and we order the eigenvalues with
the occurring multiplicities in the ascending order:}}
$
 |\lambda_{j}|\leq|\lambda_{k}| \quad\textrm{ for } |j|\leq |k|\footnote{Let us denote by $u_{\xi}$  the eigenfunction of $L$ corresponding to the
eigenvalue $\lambda_{\xi}$ for each $\xi\in\ind$, so that
$ {L}u_{\xi}=\lambda_{\xi}u_{\xi},$ 
in $   \Omega,$ for all $  \xi\in\ind.
$
Here, the system of eigenfunctions $u_\xi$ satisfy the boundary conditions
\textnormal{(BC)} discussed earlier. The conjugate spectral problem is
$
L^{\ast}v_{\xi}=\overline{\lambda}_{\xi}v_{\xi},$ in $ \Omega$ for all $  \xi\in\ind,$
which we equip with the conjugate boundary conditions, which we may denote by \textnormal{(BC)}$^*$.
This adjoint problem is associated to the adjoint  $L^{*}:={L}_\Omega^*$ of $L.$ }.
$
\item { \it{ The eigenfunctions $u_\xi$ of ${L}$ (associated to $\lambda_\xi$) and $v_\xi$ of ${L}^*$  are considered to be $L^2$-normalised. Also, they  satisfy the condition of biorthogonality,
i.e.}}
$$
(u_{\xi},v_{\eta})_{L^2}=\delta_{\xi,\eta},\,\,$$ {\it{  where $\delta_{\xi,\eta}$  is the Kronecker-Delta and $(\cdot, \cdot)_{L^2}$ is the usual $L^2$-inner product given by }} $(f, g)_{L^{2}}:=\int\limits_{\Omega}f(x)\overline{g(x)}dx,$  $f,$  $g\in L^2(\Omega).$ 
\end{itemize}

From 
\cite{bari} it follows that the system $\{u_{\xi}: \,\,\,
\xi\in\ind\}$ is a basis in $L^{2}(\Omega)$ if and only if the
system $\{v_{\xi}: \,\,\, \xi\in\ind\}$ is a basis in
$L^{2}(\Omega)$.
So, from now, we assume the following:

\begin{itemize}
    \item \it{ 
The system
$\{u_{\xi}:  \xi\in\ind\}$ is a basis in $L^{2}(\Omega)$, i.e.
for every $f\in L^{2}(\Omega)$ there exists a unique series
$\sum_{\xi\in\ind} a_\xi u_\xi(x)$ that converges to $f$ in  $L^{2}(\Omega)$.
}
\end{itemize} 
Let us define the following notation ($L$-Japanese bracket)
\begin{equation}\label{EQ:angle}
\langle\xi\rangle:=(1+|\lambda_{\xi}|^2)^{\frac{1}{2m}},
\end{equation}
which will be used later in measuring the growth/decay of Fourier coefficients  of the distributions in our context.
Define the operator
${L}^{\circ}$ by setting its values on the basis $u_{\xi}$ by
\begin{equation}\label{EQ:Lo-def}
{L}^{\circ} u_{\xi}:=\overline{\lambda_{\xi}} u_{\xi},\quad
\textrm{ for all } \xi\in\ind,
\end{equation}
we can informally think of $\langle\xi\rangle,$ $\xi\in \mathcal{I},$
as of the eigenvalues of the positive (first order) pseudo-differential operator
$({\rm I}+{L^\circ\, L})^{\frac{1}{2m}}.$

\medskip
The following technical definition will be useful to single out the case when the eigenfunctions
of both $L$ and ${L}^*$ do not have zeros (WZ stands for `without zeros'):

\begin{definition}
\label{DEF: WZ-system}
The system $\{u_{\xi}: \,\,\, \xi\in\ind\}$ is called a ${\rm WZ}$-system if the functions $u_{\xi}(x), \,
v_{\xi}(x)$ do not have zeros on the domain $\overline{\Omega}$
for all $\xi\in\ind$, and if there exist $C>0$
and $N\geq0$ such that
$$
\inf\limits_{x\in\Omega}|u_{\xi}(x)|\geq
C\langle\xi\rangle^{-N},\,\,\,
\inf\limits_{x\in\Omega}|v_{\xi}(x)|\geq
C\langle\xi\rangle^{-N},
$$
as $\langle\xi\rangle\to\infty$. Here WZ stands for `without zeros'.
\end{definition}

In the sequel, unless stated otherwise, whenever we will use inverses $u_{\xi}^{-1}$
of the functions $u_{\xi}$,
we will suppose that the system $\{u_{\xi}: \, \xi\in\ind\}$ is a ${\rm WZ}$-system.
However, we will also try to mention explicitly when we make such an additional assumption.

\subsection{Global distributions generated by the boundary value problem}
\label{SEC:TD}

Now, we will present the spaces of distributions generated by the boundary value problem
${L}_\Omega$ and by its adjoint ${L}_\Omega^*$ and the related global Fourier analysis.
We first define the space $C_{{L}}^{\infty}(\overline{\Omega})$ of test functions.

\begin{definition}\label{TestFunSp}
The space
$C_{{L}}^{\infty}(\overline{\Omega}):={\rm Dom}({L}_\Omega^{\infty})$ is called the
space of test functions for ${L}_\Omega$. Here we define
$$
{\rm Dom}({L}_\Omega^{\infty}):=\bigcap_{k=1}^{\infty}{\rm Dom}({
L}_\Omega^{k}),
$$
where ${\rm Dom}({L}_\Omega^{k})$, or just ${\rm Dom}({L}^{k})$ for simplicity, is the domain of the
operator ${L}^{k}$, in turn defined as
$$
{\rm Dom}({L}^{k}):=\{f\in L^{2}(\Omega): \,\,\, {
L}^{j}f\in {\rm Dom}({L}), \,\,\, j=0, \,1, \, 2, \ldots,
k-1\}.
$$
The operators ${L}^{k}$, $k\in\mathbb N$, are  endowed with the same boundary conditions \textnormal{(BC)}.
The Fr\'echet topology of $C_{{L}}^{\infty}(\overline{\Omega})$ is given by the family of norms
\begin{equation}\label{EQ:L-top}
\|\varphi\|_{C^{k}_{{L}}}:=\max_{j\leq k}
\|{L}^{j}\varphi\|_{L^2(\Omega)}, \quad k\in\mathbb N_0,\,
 \varphi\in C_{{L}}^{\infty}(\overline{\Omega}).
\end{equation}

Analogously,  we introduce the space $C_{{
L^{\ast}}}^{\infty}(\overline{\Omega})$ corresponding to the adjoint operator ${L}_\Omega^*$ by
$$
C_{{L^{\ast}}}^{\infty}(\overline{\Omega}):=
{\rm Dom}(({L^{\ast}})^{\infty})=\bigcap_{k=1}^{\infty}{\rm
Dom}(({L^{\ast}})^{k}),
$$
where ${\rm Dom}(({L^{\ast}})^{k})$ is the domain of the
operator $({L^{\ast}})^{k}$,
$$
{\rm Dom}(({L^{\ast}})^{k}):=\{f\in L^{2}(\Omega): \,\,\, ({
L^{\ast}})^{j}f\in {\rm Dom}({L^{\ast}}), \,\,\, j=0, \ldots, k-1\},
$$
which satisfy the adjoint boundary conditions corresponding to the operator
${L}_\Omega^*$. The Fr\'echet topology of $C_{{L}^*}^{\infty}(\overline{\Omega})$ is given by the family of norms
\begin{equation}\label{EQ:L-top-adj}
\|\psi\|_{C^{k}_{{L}^*}}:=\max_{j\leq k}
\|({L}^*)^{j}\psi\|_{L^2(\Omega)}, \quad k\in\mathbb N_0,
\, \psi\in C_{{L}^*}^{\infty}(\overline{\Omega}).
\end{equation}
\end{definition}
\begin{remark}
If ${L}_\Omega$ is self-adjoint, i.e. if ${L}_\Omega^*={L}_\Omega$
with the equality of domains, then
$C_{{L^{\ast}}}^{\infty}(\overline{\Omega})=C_{{L}}^{\infty}(\overline{\Omega}).$ On the other hand, since we have $u_\xi\in C^\infty_{{L}}(\overline\Omega)$ and
$v_\xi\in C^\infty_{{L}^*}(\overline\Omega)$ for all $\xi\in\ind$, we observe that
the biorthogonality condition of the systems $\{u_\xi\}_{\xi\in \mathcal{I}},$ and $\{v_\xi\}_{\xi\in \mathcal{I}}$ implies that the spaces
$C^\infty_{{L}}(\overline\Omega)$ and $C^\infty_{{L}^*}(\overline\Omega)$
are dense in $L^2(\Omega)$.
\end{remark}

In general, for functions $f\in C_{{L}}^{\infty}(\overline{\Omega})$ and
$g\in C_{{L}^*}^{\infty}(\overline{\Omega})$, the $L^2$-duality makes sense in view
of the formula
\begin{equation}\label{EQ:duality}
({L}f, g)_{L^2(\Omega)}=(f,{L}^*g)_{L^2(\Omega)}.
\end{equation}
Therefore, in view of the formula \eqref{EQ:duality},
it makes sense to define the distributions $\mathcal D'_{{L}}(\Omega)$
as the space which is dual to $C_{{L}^*}^{\infty}(\overline{\Omega})$.
Note that the the respective boundary conditions of ${L}_\Omega$ and ${L}_\Omega^*$
are satisfied by the choice of $f$ and $g$ in corresponding domains.

\begin{definition}\label{DistrSp}
The space $$\mathcal D'_{{
L}}(\Omega):=\mathcal L(C_{{L}^*}^{\infty}(\overline{\Omega}),
\mathbb C)$$ of linear continuous functionals on
$C_{{L}^*}^{\infty}(\overline{\Omega})$ is called the space of
${L}$-distributions.\footnote{
We can understand the continuity here either in terms of the topology
\eqref{EQ:L-top-adj} or in terms of sequences, see
Proposition \ref{TH: UniBdd}.}
For
$w\in\mathcal D'_{{L}}(\Omega)$ and $\varphi\in C_{{L}^*}^{\infty}(\overline{\Omega})$,
we shall write
$$
w(\varphi)=\langle w, \varphi\rangle.
$$
Observe that, for any $\psi\in C_{{L}}^{\infty}(\overline{\Omega})$,
$$
C_{{L}^*}^{\infty}(\overline{\Omega})\ni \varphi\mapsto\int\limits_{\Omega}{\psi(x)} \, \varphi(x)\, dx
$$
is an ${L}$-distribution, which gives an embedding $\psi\in
C_{{L}}^{\infty}(\overline{\Omega})\hookrightarrow\mathcal D'_{{
L}}(\Omega)$.
We note that in the distributional notation formula \eqref{EQ:duality} becomes
\begin{equation}\label{EQ:duality-dist}
\langle{L}\psi, \varphi\rangle=\langle \psi,\overline{{L}^* \overline{\varphi}}\rangle.
\end{equation}
\end{definition}

With the topology on $C_{{L}}^{\infty}(\overline{\Omega})$
defined by \eqref{EQ:L-top},
the space $$\mathcal
D'_{{L^{\ast}}}(\Omega):=\mathcal L(C_{{L}}^{\infty}(\overline{\Omega}), \mathbb C)$$
of linear continuous functionals on $C_{{L}}^{\infty}(\overline{\Omega})$
is called the
space of ${L^{\ast}}$-distributions.

\begin{proposition}\label{TH: UniBdd}
A linear functional $w$ on
$C_{{L}^*}^{\infty}(\overline{\Omega})$ belongs to $\mathcal D'_{{
L}}(\Omega)$ if and only if there exists a constant $c>0$ and a
number $k\in\mathbb N_0$ with the property
\begin{equation}
\label{EQ: UnifBdd-s1} |w(\varphi)|\leq
c \|\varphi\|_{C^{k}_{{L}^*}} \quad \textrm{ for all } \, \varphi\in C_{{
L}^*}^{\infty}(\overline{\Omega}).
\end{equation}
\end{proposition}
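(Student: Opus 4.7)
The plan is to prove the standard characterization of continuous linear functionals on the Fréchet space $C^\infty_{L^*}(\overline{\Omega})$, whose topology is generated by the monotonically increasing family of seminorms $\{\|\cdot\|_{C^k_{L^*}}\}_{k\in\mathbb{N}_0}$ defined in \eqref{EQ:L-top-adj}. Because this countably seminormed topology is metrizable, the footnote's two notions of continuity (topological and sequential) coincide, and I will use whichever is convenient in each direction.

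For the sufficiency direction, I assume the bound \eqref{EQ: UnifBdd-s1} holds for some $c>0$ and $k\in\mathbb{N}_0$. If $\varphi_n\to\varphi$ in $C^\infty_{L^*}(\overline{\Omega})$, then by definition of the Fréchet topology every seminorm $\|\varphi_n-\varphi\|_{C^j_{L^*}}$ tends to $0$; applying this for $j=k$ and invoking linearity yields
\[
|w(\varphi_n)-w(\varphi)|=|w(\varphi_n-\varphi)|\leq c\,\|\varphi_n-\varphi\|_{C^k_{L^*}}\longrightarrow 0,
\]
so $w\in\mathcal{D}'_{L}(\Omega)$.

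For the necessity direction, suppose $w$ is continuous. Continuity at the origin means that $w^{-1}(\{z\in\mathbb{C}:|z|<1\})$ is an open neighborhood of $0$. Since the family $\{\|\cdot\|_{C^k_{L^*}}\}$ is monotone increasing in $k$, a neighborhood basis of $0$ in $C^\infty_{L^*}(\overline{\Omega})$ is given by the sets $V_{k,\delta}:=\{\varphi:\|\varphi\|_{C^k_{L^*}}<\delta\}$, $k\in\mathbb{N}_0,\ \delta>0$. Hence there exist $k$ and $\delta>0$ such that $|w(\varphi)|<1$ whenever $\|\varphi\|_{C^k_{L^*}}<\delta$.

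Finally, a homogeneity argument converts this local bound into the global estimate \eqref{EQ: UnifBdd-s1}. For $\varphi$ with $\|\varphi\|_{C^k_{L^*}}>0$ the rescaled element $\varphi':=\bigl(\delta/(2\|\varphi\|_{C^k_{L^*}})\bigr)\varphi$ lies in $V_{k,\delta}$, so by linearity $|w(\varphi)|\leq (2/\delta)\|\varphi\|_{C^k_{L^*}}$; for $\varphi$ with $\|\varphi\|_{C^k_{L^*}}=0$ every scalar multiple $\lambda\varphi$ still lies in $V_{k,\delta}$, forcing $w(\varphi)=0$ and the inequality trivially holds. Setting $c:=2/\delta$ completes the proof. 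There is no serious obstacle: the result is purely soft functional analysis, and the only point requiring care is the use of the monotonicity of the seminorms to describe a neighborhood basis by a single seminorm rather than a finite maximum.
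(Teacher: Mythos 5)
Your proof is correct and is the standard characterization of continuous linear functionals on a Fréchet space whose topology is generated by an increasing family of seminorms. Note that the paper itself gives no proof of this Proposition: it is recalled from \cite{RT2016} as part of the preliminaries, so there is no "paper approach" to compare against. Both directions of your argument are sound: the sufficiency part correctly reduces to sequential continuity (which suffices since the space is metrizable), and the necessity part uses the monotonicity of the seminorms $\|\cdot\|_{C^k_{L^*}} = \max_{j\leq k}\|(L^*)^j\cdot\|_{L^2}$ to obtain a neighborhood basis indexed by a single $k$, followed by the usual homogeneity/scaling argument. A small simplification you could have noted: since $\|\varphi\|_{C^0_{L^*}}=\|\varphi\|_{L^2}$ already appears in the max, each $\|\cdot\|_{C^k_{L^*}}$ is in fact a norm, so the degenerate case $\|\varphi\|_{C^k_{L^*}}=0$ occurs only for $\varphi=0$; but the way you handled it via scaling is also perfectly valid.
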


The space $\mathcal D'_{{L}}(\Omega)$\footnote{The convergence in the linear space
$\mathcal D'_{{L}}(\Omega)$ is the usual weak convergence with respect to
the space $C_{{L}^*}^{\infty}(\overline{\Omega})$.} has many similarities with the
usual spaces of distributions. For example, suppose that for a linear continuous operator
$D:C_{{L}}^{\infty}(\overline{\Omega})\to C_{{L}}^{\infty}(\overline{\Omega})$
its adjoint $D^*$
preserves the adjoint boundary conditions (domain) of ${L}_\Omega^*$
and is continuous on the space
$C_{{L}^*}^{\infty}(\overline{\Omega})$, i.e.
that the operator
$D^*:C_{{L}^*}^{\infty}(\overline{\Omega})\to C_{{L}^*}^{\infty}(\overline{\Omega})$
is continuous.
Then we can extend $D$ to $\mathcal D'_{{L}}(\Omega)$ by
$$
\langle Dw,{\varphi}\rangle := \langle w, \overline{D^* \overline{\varphi}}\rangle \quad
(w\in \mathcal D'_{{L}}(\Omega),\,  \varphi\in C_{{
L}^*}^{\infty}(\overline{\Omega})).
$$
This extends \eqref{EQ:duality-dist} from $L$ to other operators.

The following principle of uniform boundedness is based on the
Banach--Steinhaus Theorem applied to the Fr\'echet space $C_{{
L}^*}^{\infty}(\overline{\Omega})$.

\begin{lemma} \label{LEM: UniformBoundedness}
Let $\{w_{j}\}_{j\in\mathbb N}$ be a sequence in $\mathcal
D'_{{L}}(\Omega)$ with the property that for every $\varphi\in
C_{{L}^*}^{\infty}(\overline{\Omega})$, the sequence
$\{w_{j}(\varphi)\}_{j\in\mathbb N}$ in $\mathbb C$ is bounded.
Then there exist constants $c>0$ and $k\in\mathbb N_0$ such that
\begin{equation}
\label{EQ: UniformBoundedness} |w_{j}(\varphi)|\leq c
\|\varphi\|_{C^{k}_{{L}^*}} \quad \textrm{ for all } \, j\in\mathbb N, \,\,
\varphi\in C_{{L}^*}^{\infty}(\overline{\Omega}).
\end{equation}
\end{lemma}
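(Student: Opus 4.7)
The plan is to deduce this from the classical Banach--Steinhaus theorem applied to the Fréchet space $C^\infty_{L^*}(\overline{\Omega})$. The hypothesis already says each $w_j$ is a continuous linear functional on a topological vector space whose topology is generated by the countable separating family of seminorms $\{\|\cdot\|_{C^k_{L^*}}\}_{k\in\mathbb N_0}$, and the conclusion is exactly equicontinuity at $0$ for the family $\{w_j\}_{j\in\mathbb N}$.

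First I would check that $C^\infty_{L^*}(\overline{\Omega})$ is actually a Fréchet space, i.e., that it is complete in the metric induced by the seminorms \eqref{EQ:L-top-adj}. This is where the standing assumption \textnormal{(BC+)} enters: since $\mathrm{Dom}(L^*)$ is closed in the topology of $C^\infty_{L^*}(\overline{\Omega})$, a sequence $\{\psi_n\}\subset C^\infty_{L^*}(\overline\Omega)$ which is Cauchy in every norm $\|\cdot\|_{C^k_{L^*}}$ has, by completeness of $L^2(\Omega)$, limits $\psi_{(j)}=\lim_n (L^*)^j\psi_n\in L^2(\Omega)$ for each $j$; the closedness of the iterated operators $(L^*)^j$ (which follows from the closedness of $\mathrm{Dom}(L^*)$, since the boundary conditions \textnormal{(BC)}$^*$ are preserved at each iterate) then forces $\psi:=\psi_{(0)}\in\mathrm{Dom}((L^*)^j)$ for all $j$ with $(L^*)^j\psi=\psi_{(j)}$, so $\psi\in C^\infty_{L^*}(\overline\Omega)$ and $\psi_n\to\psi$ in every seminorm.

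Once this is established, the pointwise boundedness assumption $\sup_j|w_j(\varphi)|<\infty$ for every $\varphi$ together with Proposition \ref{TH: UniBdd} (each $w_j$ is continuous, hence bounded by some seminorm $\|\cdot\|_{C^{k_j}_{L^*}}$) puts us in the exact setting of the uniform boundedness principle for Fréchet spaces. I would invoke Banach--Steinhaus: a pointwise bounded family of continuous linear maps from a Fréchet space to $\mathbb C$ is equicontinuous. Equicontinuity at $0$ means there exists a neighborhood of $0$ on which $|w_j(\varphi)|\leq 1$ for all $j$; since the topology is defined by the seminorms $\|\cdot\|_{C^k_{L^*}}$, such a neighborhood is of the form $\{\varphi: \|\varphi\|_{C^k_{L^*}}<\varepsilon\}$ for some $k\in\mathbb N_0$ and $\varepsilon>0$. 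Homogeneity then yields the desired bound $|w_j(\varphi)|\leq c\|\varphi\|_{C^k_{L^*}}$ with $c=1/\varepsilon$, uniformly in $j$.

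The only nontrivial step is the verification that $C^\infty_{L^*}(\overline{\Omega})$ is Fréchet; after that, the statement is a direct transcription of Banach--Steinhaus. I expect the completeness argument to be the main technical point, and it is precisely why the assumption \textnormal{(BC+)} was imposed earlier in the excerpt.
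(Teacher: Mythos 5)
Your proposal follows exactly the route the paper takes: immediately before the lemma, the paper notes that it ``is based on the Banach--Steinhaus Theorem applied to the Fr\'echet space $C_{L^*}^{\infty}(\overline{\Omega})$,'' and your argument (verify the Fr\'echet/completeness property via \textnormal{(BC+)}, then invoke Banach--Steinhaus to get equicontinuity, then use homogeneity of the seminorms to produce the constants $c$ and $k$) is precisely this. The only part you expand on which the paper leaves implicit is the completeness check; your account of how \textnormal{(BC+)} supplies it is the right reading of that assumption, with the understanding that the iterated operators $(L^*)^j$ need to be graph-closed on $L^2$ for the limit identification to go through, which is implicitly part of the standing closedness hypothesis on $\mathrm{Dom}(L^*)$.
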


The lemma above leads to the following property of completeness of
the space of ${L}$-distributions. 

\begin{theorem} \label{TH: Com-nessDistr}
Let $\{w_{j}\}_{j\in\mathbb N}$ be a
sequence in $\mathcal D'_{{L}}(\Omega)$ with the property that
for every $\varphi\in C_{{L}^*}^{\infty}(\overline{\Omega})$ the
sequence $\{w_{j}(\varphi)\}_{j\in\mathbb N}$ converges in
$\mathbb C$ as $j\rightarrow\infty$. Denote the limit by
$w(\varphi)$.

{\rm (i)} Then $w:\varphi\mapsto w(\varphi)$ defines an ${
L}$-distribution on $\Omega$. Furthermore,
$$
\lim_{j\rightarrow\infty}w_{j}=w \,\,\,\,\,\,\,\, \hbox{in}
\,\,\,\,\,\,\, \mathcal D'_{{L}}(\Omega).
$$

{\rm (ii)} If $\varphi_{j}\rightarrow\varphi$ in $\in C_{{
L}^*}^{\infty}(\overline{\Omega})$, then
$$
\lim_{j\rightarrow\infty}w_{j}(\varphi_{j})=w(\varphi) \,\,\,\,\,\,\,\,
\hbox{in} \,\,\,\,\,\,\, \mathbb C.
$$
\end{theorem}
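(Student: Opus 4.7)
The plan is to deduce both parts from the uniform boundedness principle already stated as Lemma \ref{LEM: UniformBoundedness}, plus the characterization of $L$-distributions provided by Proposition \ref{TH: UniBdd}. The hypothesis guarantees pointwise convergence, hence pointwise boundedness, of $\{w_j\}$ on the Fr\'echet space $C^{\infty}_{L^*}(\overline{\Omega})$, which is exactly the input required for that lemma.

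First, for part (i), I would verify that the limit functional $w(\varphi) := \lim_{j\to\infty} w_j(\varphi)$ is linear; this is immediate from the linearity of each $w_j$ and the linearity of limits in $\mathbb{C}$. For continuity, I apply Lemma \ref{LEM: UniformBoundedness} to the pointwise-bounded sequence $\{w_j\}$ to obtain constants $c>0$ and $k\in\mathbb{N}_0$ such that
\begin{equation*}
|w_j(\varphi)| \leq c\,\|\varphi\|_{C^{k}_{L^*}} \qquad \text{for all } j\in\mathbb{N},\ \varphi\in C^{\infty}_{L^*}(\overline{\Omega}).
\end{equation*}
Letting $j\to\infty$ preserves the inequality, yielding $|w(\varphi)| \leq c\,\|\varphi\|_{C^{k}_{L^*}}$ for every test function $\varphi$. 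By Proposition \ref{TH: UniBdd}, this means $w\in\mathcal{D}'_{L}(\Omega)$. The convergence $w_j \to w$ in $\mathcal{D}'_{L}(\Omega)$ is simply a restatement of the defining hypothesis, since convergence in $\mathcal{D}'_L(\Omega)$ is the weak convergence with respect to $C^{\infty}_{L^*}(\overline{\Omega})$.

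For part (ii), the idea is the standard ``approximation plus uniform estimate'' trick, using the same constants $c$ and $k$. Write
\begin{equation*}
|w_j(\varphi_j) - w(\varphi)| \leq |w_j(\varphi_j - \varphi)| + |w_j(\varphi) - w(\varphi)|.
\end{equation*}
The first term is bounded by $c\,\|\varphi_j - \varphi\|_{C^{k}_{L^*}}$, which tends to $0$ since $\varphi_j\to\varphi$ in the Fr\'echet topology of $C^{\infty}_{L^*}(\overline{\Omega})$ (all seminorms $\|\cdot\|_{C^k_{L^*}}$ vanish in the limit). The second term tends to $0$ by the hypothesis defining $w$ from part (i). Combining these two estimates gives $w_j(\varphi_j)\to w(\varphi)$.

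The only nontrivial ingredient in this argument is the uniform bound, which is supplied for free by Lemma \ref{LEM: UniformBoundedness}; the rest is a mechanical chase through the definitions. I expect no significant obstacle, since the Fr\'echet completeness of $C^{\infty}_{L^*}(\overline{\Omega})$ (needed for the Banach--Steinhaus theorem underlying Lemma \ref{LEM: UniformBoundedness}) has already been built into the construction of the test function space in Definition \ref{TestFunSp}.
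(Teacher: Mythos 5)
Your proposal is correct and follows exactly the route the paper signals by prefacing the theorem with ``The lemma above leads to the following property of completeness'': pointwise convergence gives pointwise boundedness, Lemma \ref{LEM: UniformBoundedness} (Banach--Steinhaus on the Fr\'echet space $C^{\infty}_{L^*}(\overline{\Omega})$) yields a uniform estimate $|w_j(\varphi)|\le c\|\varphi\|_{C^k_{L^*}}$, passing to the limit and invoking Proposition \ref{TH: UniBdd} establishes (i), and the two-term triangle-inequality split with the same uniform constant handles (ii). This is the standard argument and matches the paper's intended proof.
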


Similarly to the previous case, we have analogues of
Proposition \ref{TH: UniBdd} and Theorem \ref{TH: Com-nessDistr}
for ${L^{\ast}}$-distributions.

\subsection{${L}$-Fourier transform, ${L}$-Convolution, Plancherel formula, Sobolev spaces  and their Fourier images}
\label{SEC:FT}

Let us start by defining the ${L}$-Fourier transform introduced in \cite{RT2016}, which is generated by
the boundary value problem ${L}_\Omega$ and its main properties. Here we record that:

\MyQuote{assume that, with ${L}_0$ denoting ${L}$ or ${L}^*$, if
$f_j\in C_{{L}_0}^{\infty}(\overline{\Omega})$ satisfies $f_j\to f$ in $C_{{L}_0}^{\infty}(\overline{\Omega})$,
then $f\in C_{{L}_0}^{\infty}(\overline{\Omega})$.}{BC+}
\medskip
Let us denote by $\mathcal S(\ind)$  the space of rapidly decaying
functions $\varphi:\ind\rightarrow\mathbb C$.\footnote{That is,
$\varphi\in\mathcal S(\ind)$ if for any $M<\infty$ there
exists a constant $C_{\varphi, M}$ such that
$
|\varphi(\xi)|\leq C_{\varphi, M}\langle\xi\rangle^{-M}
$
holds for all $\xi\in\ind$.
The topology on $\mathcal
S(\ind)$ is given by the seminorms $p_{k}$, where
$k\in\mathbb N_{0}$ and $ p_{k}(\varphi):=\sup_{\xi\in\ind}\langle\xi\rangle^{k}|\varphi(\xi)|.$  }
In this space, the continuous linear functionals  are of
the form
$$
\varphi\mapsto\langle u, \varphi\rangle:=\sum_{\xi\in\ind}u(\xi)\varphi(\xi),
$$
where the functions $u:\ind \rightarrow \mathbb C$ grow at most
polynomially at infinity, i.e. there exist constants $M<\infty$
and $C_{u, M}$ such that
$ 
|u(\xi)|\leq C_{u, M}\langle\xi\rangle^{M},
$ 
holds for all $\xi\in\ind$. Such distributions $u:\ind
\rightarrow \mathbb C$ form the space of distributions which we denote by
$\mathcal S'(\ind)$.
We now define the $L$-Fourier transform on $C_{{L}}^{\infty}(\overline{\Omega})$.

\begin{definition} \label{FT}
We define the ${L}$-Fourier transform
$$
(\mathcal F_{{L}}f)(\xi)=(f\mapsto\widehat{f}):
C_{{L}}^{\infty}(\overline{\Omega})\rightarrow\mathcal S(\ind)
$$
by
\begin{equation}
\label{FourierTr}
\widehat{f}(\xi):=(\mathcal F_{{L}}f)(\xi)=\int\limits_{\Omega}f(x)\overline{v_{\xi}(x)}dx.
\end{equation}
Analogously, we define the ${L}^{\ast}$-Fourier
transform
$$
(\mathcal F_{{L}^{\ast}}f)(\xi)=(f\mapsto\widehat{f}_{\ast}):
C_{{L}^{\ast}}^{\infty}(\overline{\Omega})\rightarrow\mathcal
S(\ind)
$$
by
\begin{equation}\label{ConjFourierTr}
\widehat{f}_{\ast}(\xi):=(\mathcal F_{{
L}^{\ast}}f)(\xi)=\int\limits_{\Omega}f(x)\overline{u_{\xi}(x)}dx.
\end{equation}
\end{definition}

The expressions \eqref{FourierTr} and \eqref{ConjFourierTr}
are well-defined.
Moreover, we have:

\begin{proposition}\label{LEM: FTinS}
The ${L}$-Fourier transform
$\mathcal F_{{L}}$ is a bijective homeomorphism from $C_{{
L}}^{\infty}(\overline{\Omega})$ to $\mathcal S(\ind)$.
Its inverse  $$\mathcal F_{{L}}^{-1}: \mathcal S(\ind)
\rightarrow C_{{L}}^{\infty}(\overline{\Omega})$$ is given by
\begin{equation}
\label{InvFourierTr} (\mathcal F^{-1}_{{
L}}h)(x)=\sum_{\xi\in\ind}h(\xi)u_{\xi}(x),\quad h\in\mathcal S(\ind),
\end{equation}
so that the Fourier inversion formula becomes
\begin{equation}
\label{InvFourierTr0}
f(x)=\sum_{\xi\in\ind}\widehat{f}(\xi)u_{\xi}(x)
\quad \textrm{ for all } f\in C_{{
L}}^{\infty}(\overline{\Omega}).
\end{equation}
Similarly,  $\mathcal F_{{L}^{\ast}}:C_{{L}^{\ast}}^{\infty}(\overline{\Omega})\to \mathcal S(\ind)$
is a bijective homeomorphism and its inverse
$$\mathcal F_{{L}^{\ast}}^{-1}: \mathcal S(\ind)\rightarrow
C_{{L}^{\ast}}^{\infty}(\overline{\Omega})$$ is given by
\begin{equation}
\label{ConjInvFourierTr} (\mathcal F^{-1}_{{
L}^{\ast}}h)(x):=\sum_{\xi\in\ind}h(\xi)v_{\xi}(x), \quad h\in\mathcal S(\ind),
\end{equation}
so that the conjugate Fourier inversion formula becomes
\begin{equation}
\label{ConjInvFourierTr0} f(x)=\sum_{\xi\in\ind}\widehat{f}_{\ast}(\xi)v_{\xi}(x)\quad \textrm{ for all } f\in C_{{
L^*}}^{\infty}(\overline{\Omega}).
\end{equation}
\end{proposition}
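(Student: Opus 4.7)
The plan is to establish four statements in sequence: (a) $\mathcal{F}_{L}$ sends $C_{L}^{\infty}(\overline{\Omega})$ continuously into $\mathcal{S}(\ind)$; (b) the formula (\ref{InvFourierTr}) defines a continuous map $\mathcal{S}(\ind)\to C_{L}^{\infty}(\overline{\Omega})$; (c) these two maps are mutual inverses, which simultaneously yields bijectivity and the inversion identity (\ref{InvFourierTr0}); and (d) the statements for $\mathcal{F}_{L^{*}}$ follow by running the same argument with the roles of $\{u_{\xi}\}$ and $\{v_{\xi}\}$ interchanged.

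For (a) the central identity is $\widehat{Lf}(\xi)=\lambda_{\xi}\,\widehat{f}(\xi)$, which I obtain from the $L^{2}$-duality (\ref{EQ:duality}) together with the conjugate spectral relation $L^{*}v_{\xi}=\overline{\lambda_{\xi}}v_{\xi}$: namely,
\[
\widehat{Lf}(\xi)=(Lf,v_{\xi})_{L^{2}}=(f,L^{*}v_{\xi})_{L^{2}}=\lambda_{\xi}\,(f,v_{\xi})_{L^{2}}=\lambda_{\xi}\,\widehat{f}(\xi).
\]
Iterating this $k$ times, using Cauchy--Schwarz with $\|v_{\xi}\|_{L^{2}}=1$, and the elementary equivalence $\langle\xi\rangle^{m}\asymp 1+|\lambda_{\xi}|$ coming from the definition (\ref{EQ:angle}), I get $\langle\xi\rangle^{mk}|\widehat{f}(\xi)|\leq C_{k}\|f\|_{C_{L}^{k}}$. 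This bound simultaneously certifies $\widehat{f}\in\mathcal{S}(\ind)$ and gives continuity of $\mathcal{F}_{L}$ with respect to the stated Fr\'echet topologies.

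For (b), given $h\in\mathcal{S}(\ind)$, I form the partial sums $f_{N}:=\sum_{\langle\xi\rangle\leq N}h(\xi)u_{\xi}$. Since $\|L^{j}u_{\xi}\|_{L^{2}}=|\lambda_{\xi}|^{j}\leq\langle\xi\rangle^{mj}$, rapid decay of $h$ combined with polynomial growth of the eigenvalue counting function $\#\{\xi:\langle\xi\rangle\leq R\}$ forces $\sum_{\xi}|h(\xi)|\,\langle\xi\rangle^{mj}<\infty$ for every $j$. Consequently $\{f_{N}\}$ is Cauchy in each norm $\|\cdot\|_{C_{L}^{k}}$, and the assumption (BC+) (which says precisely that $C_{L}^{\infty}(\overline{\Omega})$ is closed under such limits) ensures that the limit $f$ belongs to $C_{L}^{\infty}(\overline{\Omega})$. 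The same summability yields continuity of $h\mapsto f$. For (c), I use the basis hypothesis to write $f=\sum_{\xi}a_{\xi}u_{\xi}$ in $L^{2}(\Omega)$, and pairing with $v_{\eta}$ under the biorthogonality $(u_{\xi},v_{\eta})_{L^{2}}=\delta_{\xi\eta}$ identifies $a_{\eta}=(f,v_{\eta})_{L^{2}}=\widehat{f}(\eta)$; in the other direction, if $f=\sum_{\xi}h(\xi)u_{\xi}$ constructed in (b), the same pairing gives $\widehat{f}=h$.

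The main obstacle is step (b): converting the pointwise rapid decay of a sequence on $\ind$ into convergence in the Fr\'echet topology of $C_{L}^{\infty}(\overline{\Omega})$, and in particular certifying that the limit lies in $C_{L}^{\infty}(\overline{\Omega})$ rather than merely in $L^{2}(\Omega)$. The former is handled by direct norm estimates on partial sums, but the latter genuinely requires (BC+), i.e.\ the closedness of $\mathrm{Dom}(L^{k})$ for every $k$ in the topology generated by (\ref{EQ:L-top}); without this closure property the candidate inverse would only produce an element of $L^{2}(\Omega)$ with rapidly decaying coefficients, not a bona fide test function respecting the boundary conditions.
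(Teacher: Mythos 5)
Your argument is correct and follows what is essentially the standard route, which the paper itself delegates to \cite{RT2016} without reproducing. The two key mechanisms you identify are exactly the ones that matter: the intertwining $\widehat{Lf}(\xi)=\lambda_{\xi}\widehat{f}(\xi)$ (obtained via the duality \eqref{EQ:duality} and the conjugate eigenequation) drives the forward continuity estimate, and the eigenfunction expansion together with biorthogonality drives the inverse. Steps (c) and (d) are exactly as they should be, and your observation that the real content of (b) is showing the limit lands in $C_{L}^{\infty}(\overline{\Omega})$ rather than merely $L^{2}(\Omega)$ is the correct reading of where \textnormal{(BC+)} enters.

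One point worth making explicit: the summability you invoke in step (b), namely $\sum_{\xi}|h(\xi)|\langle\xi\rangle^{mj}<\infty$, is not a consequence of rapid decay of $h$ alone. It requires the counting function $\#\{\xi:\langle\xi\rangle\leq R\}$ to grow at most polynomially, which is precisely Assumption~\ref{Assumption_4} ($\sum_{\xi}\langle\xi\rangle^{-s_0}<\infty$). In the present paper that assumption is stated only in the subsection following this proposition (and is flagged ``from now on''), but it is made globally in \cite{RT2016}, so your appeal to it is legitimate; you should just name it rather than treating ``polynomial growth of the counting function'' as an unstated background fact. With Assumption~\ref{Assumption_4} in hand, your chain
\[
\Bigl\|\sum_{\xi}h(\xi)L^{j}u_{\xi}\Bigr\|_{L^{2}}
\le \sum_{\xi}|h(\xi)|\,\langle\xi\rangle^{mj}
\le p_{mj+s_0}(h)\sum_{\xi}\langle\xi\rangle^{-s_0}
\]
simultaneously yields Cauchyness of the partial sums in every $C_{L}^{k}$-seminorm and continuity of $\mathcal{F}_{L}^{-1}$, so the ``bijective homeomorphism'' claim, not just bijectivity, is covered.
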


By dualising the inverse ${L}$-Fourier
transform $\mathcal F_{{L}}^{-1}: \mathcal S(\ind)
\rightarrow C_{{L}}^{\infty}(\overline{\Omega})$, the
${L}$-Fourier transform extends uniquely to the mapping
$$\mathcal F_{{L}}: \mathcal D'_{{L}}(\Omega)\rightarrow
\mathcal S'(\ind)$$ by the formula
\begin{equation}\label{EQ: FTofDistr}
\langle\mathcal F_{{L}}w, \varphi\rangle:=
\langle w,\overline{\mathcal F_{{L}^*}^{-1}\overline{\varphi}}\rangle,
\quad\textrm{ with } w\in\mathcal D'_{{L}}(\Omega),\, \varphi\in\mathcal
S(\ind).
\end{equation}
It can be readily seen that if $w\in\mathcal D'_{{
L}}(\Omega)$ then $\widehat{w}\in\mathcal S'(\ind)$.
The reason for taking complex conjugates in \eqref{EQ: FTofDistr}
is that, if $w\in C_{{L}}^{\infty}(\overline{\Omega})$, we have
the equality
\begin{multline*}
\langle \widehat{w},\varphi\rangle =
\sum_{\xi\in\ind} \widehat{w}(\xi) \varphi(\xi)=
\sum_{\xi\in\ind} \left( \int_\Omega w(x) \overline{v_\xi(x)}dx\right) \varphi(\xi)\\
=
\int_\Omega w(x) \overline{\left( \sum_{\xi\in\ind} \overline{\varphi(\xi)} v_\xi(x)\right)} dx
=
\int_\Omega w(x) \overline{\left( \mathcal F_{{L}^*}^{-1} \overline{\varphi} \right)} dx
=\langle w,\overline{\mathcal F_{{L}^*}^{-1}\overline{\varphi}}\rangle.
\end{multline*}
Analogously, we have the mapping
$$\mathcal F_{{L}^*}: \mathcal D'_{{L}^*}(\Omega)\rightarrow
\mathcal S'(\ind)$$ defined by the formula
\begin{equation}\label{EQ: FTofDistr}
\langle\mathcal F_{{L}^*}w, \varphi\rangle:=
\langle w,\overline{\mathcal F_{{L}}^{-1}\overline{\varphi}}\rangle,
\quad\textrm{ with } w\in\mathcal D'_{{L}^*}(\Omega),\, \varphi\in\mathcal
S(\ind).
\end{equation}
It can be also seen that if $w\in\mathcal D'_{{
L}^*}(\Omega)$ then $\widehat{w}\in\mathcal S'(\ind)$.
The following statement follows from the work of Bari \cite[Theorem 9]{bari}:

\begin{lemma}\label{LEM: FTl2}
There exist constants $K,m,M>0$ such that for every $f\in L^{2}(\Omega)$
we have
$$
m^2\|f\|_{L^{2}}^2 \leq \sum_{\xi\in\ind} |\widehat{f}(\xi)|^2\leq M^2\|f\|_{L^{2}}^2 \leq \sum_{\xi\in\ind} |\widehat{f}_*(\xi)|^2\leq K^2\|f\|_{L^{2}}^2.
$$
\end{lemma}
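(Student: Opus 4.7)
The lemma is a quantitative reformulation of the statement that both $\{u_\xi\}_{\xi \in \ind}$ and $\{v_\xi\}_{\xi \in \ind}$ are Riesz bases of $L^2(\Omega)$. The paper has already assumed that $\{u_\xi\}$ is a Schauder basis of $L^2(\Omega)$ and, via the reference to \cite{bari}, that the biorthogonal system $\{v_\xi\}$ is then also a basis. My plan is to invoke Bari's Theorem~9 in \cite{bari}, which guarantees that such a pair of biorthogonal bases is in fact a Riesz basis: there exists an orthonormal basis $\{e_\xi\}_{\xi \in \ind}$ of $L^2(\Omega)$ and a bounded invertible operator $T : L^2(\Omega) \to L^2(\Omega)$ with $u_\xi = T e_\xi$, and consequently, by biorthogonality, $v_\xi = (T^*)^{-1} e_\xi$.

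With this realisation in hand, I would rewrite the Fourier coefficients by pulling $T^{\pm 1}$ through the $L^2$-inner product using the definitions \eqref{FourierTr} and \eqref{ConjFourierTr}:
\[
\widehat{f}(\xi) = (f, v_\xi)_{L^2} = (f, (T^*)^{-1} e_\xi)_{L^2} = (T^{-1} f, e_\xi)_{L^2},
\]
\[
\widehat{f}_{\ast}(\xi) = (f, u_\xi)_{L^2} = (T^* f, e_\xi)_{L^2}.
\]
Parseval's identity for the orthonormal basis $\{e_\xi\}$ then converts both $\ell^2$-sums into operator images,
\[
\sum_{\xi \in \ind} |\widehat{f}(\xi)|^2 = \|T^{-1} f\|_{L^2}^2, \qquad \sum_{\xi \in \ind} |\widehat{f}_{\ast}(\xi)|^2 = \|T^* f\|_{L^2}^2,
\]
and the standard two-sided estimates $\|T\|^{-1} \|f\|_{L^2} \leq \|T^{-1} f\|_{L^2} \leq \|T^{-1}\| \|f\|_{L^2}$ (together with the analogous bound for $T^*$, using $\|T^*\| = \|T\|$ and $\|(T^*)^{-1}\| = \|T^{-1}\|$) translate directly into two-sided inequalities of the stated form, with $m, M, K > 0$ chosen from $\|T\|^{\pm 1}$ and $\|T^{-1}\|^{\pm 1}$.

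The main point is that nothing beyond the classical Riesz-basis theorem of Bari is required; there is no genuine analytic obstacle, only the repackaging of an abstract Hilbert-space fact. The only care needed is the bookkeeping of constants so that the common letter $M$ appearing on the two sides of the middle of the chain can be chosen consistently; this amounts to a straightforward comparison of the operator norms of $T$ and $T^*$ and a choice of $M$ lying between the upper bound for the first sum and the lower bound for the second.
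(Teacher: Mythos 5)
Your identification of the key tool is exactly right: the paper indeed offers no proof of this lemma, only a citation of Bari's Theorem 9, and the intended content is the Riesz-basis characterisation. Your realisation $u_\xi = Te_\xi$, $v_\xi = (T^*)^{-1}e_\xi$, and the resulting identities $\sum_\xi|\widehat f(\xi)|^2=\|T^{-1}f\|^2$, $\sum_\xi|\widehat f_*(\xi)|^2=\|T^*f\|^2$ are correct and in fact \emph{more} explicit than what the paper provides. So far the route matches the paper's (implicit) one and is cleaner.

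The gap is in the final sentence, where you dismiss the bookkeeping as "a choice of $M$ lying between the upper bound for the first sum and the lower bound for the second." Those two thresholds do not, in general, leave room for any $M$. From your own identities, the smallest constant in $\sum|\widehat f(\xi)|^2\le M^2\|f\|^2$ is $M=\|T^{-1}\|$, while the largest constant in $M^2\|f\|^2\le\sum|\widehat f_*(\xi)|^2$ is $M=1/\|(T^*)^{-1}\|=1/\|T^{-1}\|$. A common $M$ exists only when $\|T^{-1}\|\le 1$. But the paper normalises $\|u_\xi\|_{L^2}=1$, i.e.\ $\|Te_\xi\|=1$ for all $\xi$; this forces $\|T^{-1}\|\ge 1$ (since $1=\|T^{-1}Te_\xi\|\le\|T^{-1}\|\cdot 1$), and $\|T^{-1}\|=1$ together with $\|Te_\xi\|=1$ already forces $T^*T=I$, i.e.\ $T$ unitary and $\{u_\xi\}$ an orthonormal basis — the trivial case. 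A concrete $2\times 2$ model with $u_1=(1,0)$, $u_2=(\cos\theta,\sin\theta)$, $\theta\in(0,\pi/2)$, shows the middle inequality fails for a single $M$. So either the lemma's single $M$ is an imprecision that should be read as two distinct constants (in which case your proof works with that small change), or some further normalisation is being used implicitly that you would need to exhibit; as written, the constant-matching step is not straightforward and in fact fails in the genuinely non-self-adjoint case.
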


However, we note that the Plancherel identity can be also achieved in suitably
defined $l^2$-spaces of Fourier coefficients, see Proposition \ref{PlanchId}.

Let us introduce a notion of the  ${L}$-convolution, an analogue of the convolution adapted to the
boundary problem ${L}_\Omega$.

\begin{definition} (${L}$-Convolution) \label{Convolution}
For $f, g\in C_{{L}}^{\infty}(\overline{\Omega})$ define their ${L}$-convolution
by
\begin{equation}\label{EQ: CONV1}
(f\sL g)(x):=\sum_{\xi\in\ind}\widehat{f}(\xi)\widehat{g}(\xi)u_{\xi}(x).
\end{equation}
By Proposition \ref{LEM: FTinS}  it is well-defined and we have
$f\sL g\in C_{{L}}^{\infty}(\overline{\Omega}).$\footnote{Due to the rapid decay of $L$-Fourier coefficients of
functions in $C_{{L}}^{\infty}(\overline{\Omega})$ compared to
a fixed polynomial growth of elements of $\mathcal S'(\ind)$, the
definition \eqref{EQ: CONV1}  still makes sense if $f\in \mathcal
D^\prime_{L}(\Omega)$ and $g\in C_{{
L}}^{\infty}(\overline{\Omega})$, with $f\sL g\in C_{{
L}}^{\infty}(\overline{\Omega}).$}
\end{definition}

Analogously to the ${L}$-convolution, we can introduce the ${L}^*$-convolution.
Thus, for $f, g\in C_{{
L^{\ast}}}^{\infty}(\overline{\Omega})$, we define the ${
L^{\ast}}$-convolution using the ${L}^*$-Fourier transform by
\begin{equation}\label{EQ: CONV2}
(f\sLs g)(x):=\sum_{\xi\in\ind}\widehat{f}_{\ast}(\xi)\widehat{g}_{\ast}(\xi)v_{\xi}(x).
\end{equation}
Its properties are similar to those of the ${L}$-convolution, so we may
formulate only the latter.
\begin{proposition}\label{ConvProp}For any $f, g\in C_{{
L}}^{\infty}(\overline{\Omega})$ we have
$$\widehat{f\sL g}=\widehat{f}\times \widehat{g},\,\xi\in \mathcal{I}.$$
The convolution is commutative and associative.
If $g \in C_{{
L}}^{\infty}(\overline{\Omega}),$ then for all
$f\in \mathcal D^\prime_{L}(\Omega)$ we have
\begin{equation}\label{EQ:conv1}
f\sL g\in C_{{L}}^{\infty}(\overline{\Omega}).
\end{equation}
In addition,  if $\Omega\subset \mathbb{R}^n$ is bounded, and $f,g\in  L^{2}(\Omega)$, then $f\sL g\in L^{1}(\Omega)$ with
$$\|f\sL g\|_{L^1}\leq C|\Omega|^{1/2} \|f\|_{L^2}\|g\|_{L^2},$$
where $|\Omega|\in (0,\infty]$ is the volume of $\Omega$, with $C$ independent
of $f,g,\Omega$.
\end{proposition}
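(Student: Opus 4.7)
The plan is to handle each assertion in turn, leveraging the defining identity for the $L$-convolution together with the biorthogonality of the systems $\{u_\xi\}$ and $\{v_\xi\}$.

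First, for the Fourier-product identity $\widehat{f\sL g}=\widehat f\cdot\widehat g$, I would compute $\widehat{f\sL g}(\eta)$ directly from the definition \eqref{EQ: CONV1}: since $\widehat f,\widehat g\in\mathcal S(\ind)$ for $f,g\in C_L^\infty(\overline\Omega)$ by Proposition \ref{LEM: FTinS}, the series \eqref{EQ: CONV1} converges in $C_L^\infty(\overline\Omega)$, so one may apply $\mathcal F_L$ term by term. Using the biorthogonality $(u_\xi,v_\eta)_{L^2}=\delta_{\xi,\eta}$ one obtains
\[
\widehat{f\sL g}(\eta)\,=\,\sum_{\xi\in\ind}\widehat f(\xi)\,\widehat g(\xi)\,(u_\xi,v_\eta)_{L^2}\,=\,\widehat f(\eta)\,\widehat g(\eta).
\]
Commutativity and associativity of $\sL$ then follow immediately from the commutativity and associativity of pointwise multiplication on $\mathcal S(\ind)$, together with the bijectivity of $\mathcal F_L:C_L^\infty(\overline\Omega)\to\mathcal S(\ind)$.

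For \eqref{EQ:conv1}, if $f\in\mathcal D'_L(\Omega)$ then $\widehat f\in\mathcal S'(\ind)$ has at most polynomial growth, while $\widehat g\in\mathcal S(\ind)$ is rapidly decaying. Hence $\widehat f\,\widehat g\in\mathcal S(\ind)$, and by Proposition \ref{LEM: FTinS} one recovers $f\sL g=\mathcal F_L^{-1}(\widehat f\,\widehat g)\in C_L^\infty(\overline\Omega)$.

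Finally, for the $L^1$-estimate when $\Omega\subset\mathbb R^n$ is bounded and $f,g\in L^2(\Omega)$, I would first use the $L^2$-normalisation $\|v_\xi\|_{L^2}=1$ and Cauchy--Schwarz to get the uniform bound $|\widehat g(\xi)|\leq\|g\|_{L^2}$. Lemma \ref{LEM: FTl2} applied to the partial sums of \eqref{EQ: CONV1} shows first that they form a Cauchy sequence in $L^2(\Omega)$, so $f\sL g$ is well-defined as an $L^2$-limit, and then that
\[
\|f\sL g\|_{L^2}^2\,\leq\,\frac{1}{m^2}\sum_{\xi\in\ind}|\widehat f(\xi)|^2|\widehat g(\xi)|^2\,\leq\,\frac{\|g\|_{L^2}^2}{m^2}\sum_{\xi\in\ind}|\widehat f(\xi)|^2\,\leq\,\frac{M^2}{m^2}\|f\|_{L^2}^2\|g\|_{L^2}^2.
\]
Since $|\Omega|<\infty$, Cauchy--Schwarz on $\Omega$ gives $\|h\|_{L^1}\leq|\Omega|^{1/2}\|h\|_{L^2}$, yielding $\|f\sL g\|_{L^1}\leq(M/m)|\Omega|^{1/2}\|f\|_{L^2}\|g\|_{L^2}$, with $C=M/m$ independent of $f,g,\Omega$. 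The main obstacle is really this last point: one must justify that $f\sL g$ is actually defined (as an $L^2$-limit) for general $f,g\in L^2(\Omega)$, and for this the combination of the frame-type inequality in Lemma \ref{LEM: FTl2} with the uniform bound $\sup_\xi|\widehat g(\xi)|<\infty$ is essential.
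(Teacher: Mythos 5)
Your proof is correct. The paper itself does not spell out a proof of this proposition (it is stated as a preliminary recalled from \cite{RT2016}), so there is nothing to compare against line by line; but your argument is sound and self-contained. The Fourier-product identity follows even more directly than you indicate, since by definition $f\sL g=\mathcal F_L^{-1}(\widehat f\,\widehat g)$ and $\mathcal F_L$ is a bijection, so $\widehat{f\sL g}=\widehat f\,\widehat g$ tautologically; your route through biorthogonality and term-by-term integration reaches the same conclusion. Commutativity, associativity, and the distribution-times-test-function case \eqref{EQ:conv1} are handled exactly as expected via $\mathcal F_L$.

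For the $L^1$ estimate your route is a small but genuine variant of the most obvious one. You pass through $L^2$: the lower frame bound in Lemma~\ref{LEM: FTl2} gives $\|h\|_{L^2}^2\leqslant m^{-2}\sum_\xi|\widehat h(\xi)|^2$, which combined with the uniform bound $|\widehat g(\xi)|\leqslant\|g\|_{L^2}$ and the upper frame bound for $\widehat f$ yields $\|f\sL g\|_{L^2}\leqslant (M/m)\|f\|_{L^2}\|g\|_{L^2}$, and then Cauchy--Schwarz on the finite-measure set $\Omega$ produces the $L^1$ estimate. An alternative is to estimate $\|f\sL g\|_{L^1}$ directly: $\|f\sL g\|_{L^1}\leqslant\sum_\xi|\widehat f(\xi)||\widehat g(\xi)|\,\|u_\xi\|_{L^1}\leqslant|\Omega|^{1/2}\sum_\xi|\widehat f(\xi)||\widehat g(\xi)|$ (using $\|u_\xi\|_{L^1}\leqslant|\Omega|^{1/2}\|u_\xi\|_{L^2}=|\Omega|^{1/2}$), then Cauchy--Schwarz in $\xi$ and the upper frame bound give $C=M^2$. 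Your version buys the extra information that $f\sL g\in L^2(\Omega)$ with a quantitative bound, and you correctly flag that the $L^2$-convergence of the partial sums must be established first, doing so via the frame inequality applied to the tails. One minor caveat worth keeping in mind: the constant you produce, $C=M/m$, is the ratio of the Bari frame constants for the system $\{u_\xi\}$; these are fixed once $L_\Omega$ is fixed, so the phrase ``$C$ independent of $\Omega$'' in the statement should be read in that sense rather than as a uniformity across a family of domains.
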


\medskip
Let us denote by $ l^{2}_{{L}}=l^2({L})$  
the linear space of complex-valued functions $a$
on $\ind$ such that $\mathcal F^{-1}_{{L}}a\in
L^{2}(\Omega)$, i.e. if there exists $f\in L^{2}(\Omega)$ such that $\mathcal F_{{L}}f=a$.
Then the space of sequences $l^{2}_{{L}}$ is a
Hilbert space with the inner product
\begin{equation}\label{EQ: InnerProd SpSeq-s}
(a,\ b)_{l^{2}_{{
L}}}:=\sum_{\xi\in\ind}a(\xi)\ \overline{(\mathcal F_{{
L^{\ast}}}\circ\mathcal F^{-1}_{{L}}b)(\xi)}
\end{equation}
for arbitrary $a,\,b\in l^{2}_{{L}}$.
The norm of $l^{2}_{{L}}$ is then given by the
formula
\begin{equation}\label{EQ:l2norm}
\|a\|_{l^{2}_{{L}}}=\left(\sum_{\xi\in\ind}a(\xi)\
\overline{(\mathcal F_{{L^{\ast}}}\circ\mathcal F^{-1}_{{
L}}a)(\xi)}\right)^{1/2}, \quad \textrm{ for all } \, a\in l^{2}_{{L}}.
\end{equation} Analogously, we introduce the
Hilbert space $ l^{2}_{{L^{\ast}}}=l^{2}({L^{\ast}})$
as the space of functions $a$ on $\ind$
such that $\mathcal F^{-1}_{{L^{\ast}}}a\in L^{2}(\Omega)$,
with the inner product
\begin{equation}
\label{EQ: InnerProd SpSeq-s_2} (a,\ b)_{l^{2}_{{
L^{\ast}}}}:=\sum_{\xi\in\ind}a(\xi)\ \overline{(\mathcal
F_{{L}}\circ\mathcal F^{-1}_{{L^{\ast}}}b)(\xi)}
\end{equation}
for arbitrary $a,\,b\in l^{2}_{{L^{\ast}}}$. The norm of
$l^{2}_{{L^{\ast}}}$ is given by the formula
$$
\|a\|_{l^{2}_{{L^{\ast}}}}=\left(\sum_{\xi\in\ind}a(\xi)\
\overline{(\mathcal F_{{L}}\circ\mathcal F^{-1}_{{
L^{\ast}}}a)(\xi)}\right)^{1/2}
$$
for all $a\in l^{2}_{{L^{\ast}}}$. The spaces of sequences
$l^{2}_{{L}}$ and
$l^{2}_{{L^{\ast}}}$ are thus generated by biorthogonal systems
$\{u_{\xi}\}_{\xi\in\ind}$ and $\{v_{\xi}\}_{\xi\in\ind}$.
The reason for their definition in the above forms becomes clear again
in view of the following Plancherel identity:

\begin{proposition} {\rm(Plancherel's identity)}\label{PlanchId}
If $f,\,g\in L^{2}(\Omega)$ then
$\widehat{f},\,\widehat{g}\in l^{2}_{{L}}, \,\,\,
\widehat{f}_{\ast},\, \widehat{g}_{\ast}\in l^{2}_{{\rm
L^{\ast}}}$, and the inner products {\rm(\ref{EQ: InnerProd SpSeq-s}),
(\ref{EQ: InnerProd SpSeq-s_2})} take the form
$$
(\widehat{f},\ \widehat{g})_{l^{2}_{{L}}}=\sum_{\xi\in\ind}\widehat{f}(\xi)\ \overline{\widehat{g}_{\ast}(\xi)}
,\,\,
(\widehat{f}_{\ast},\ \widehat{g}_{\ast})_{l^{2}_{{
L^{\ast}}}}=\sum_{\xi\in\ind}\widehat{f}_{\ast}(\xi)\
\overline{\widehat{g}(\xi)}.
$$
In particular, we have
$$
\overline{(\widehat{f},\ \widehat{g})_{l^{2}_{{L}}}}=
(\widehat{g}_{\ast},\ \widehat{f}_{\ast})_{l^{2}_{{
L^{\ast}}}}.
$$
The Parseval identity takes the form
\begin{equation}\label{Parseval}
(f,g)_{L^{2}}=(\widehat{f},\widehat{g})_{l^{2}_{{
L}}}=\sum_{\xi\in\ind}\widehat{f}(\xi)\ \overline{\widehat{g}_{\ast}(\xi)}.
\end{equation}
Furthermore, for any $f\in L^{2}(\Omega)$, we have
$\widehat{f}\in l^{2}_{{L}}$, $\widehat{f}_{\ast}\in l^{2}_{{
L^{\ast}}}$, and
$ \|f\|_{L^{2}}=\|\widehat{f}\|_{l^{2}_{{
L}}}=\|\widehat{f}_{\ast}\|_{l^{2}_{{L^{\ast}}}}.
$
\end{proposition}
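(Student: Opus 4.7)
The plan is to reduce everything to the $L^2$-expansion $f=\sum_\xi\widehat f(\xi)u_\xi$ and its biorthogonal dual $f=\sum_\xi\widehat f_\ast(\xi)v_\xi$, together with the defining property of the inner products on $l^2_L$ and $l^2_{L^\ast}$. First I would establish the expansion itself: by hypothesis $\{u_\xi\}$ is a basis of $L^2(\Omega)$, so for each $f\in L^2(\Omega)$ there is a unique sequence $(a_\xi)$ with $S_N:=\sum_{|\xi|\le N}a_\xi u_\xi\to f$ in $L^2$. Testing $S_N$ against $v_\eta$, the biorthogonality $(u_\xi,v_\eta)_{L^2}=\delta_{\xi\eta}$ isolates $a_\eta$, while the continuity of the $L^2$-inner product in its first slot gives $a_\eta=(f,v_\eta)_{L^2}=\widehat f(\eta)$. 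Since $\{v_\xi\}$ is likewise a basis (already invoked in the preliminaries via Bari's theorem), the symmetric argument delivers $f=\sum_\xi\widehat f_\ast(\xi)v_\xi$ in $L^2$.

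With the expansions at hand, $\mathcal F_L^{-1}\widehat f=f\in L^2(\Omega)$ is exactly the defining condition of $l^2_L$, hence $\widehat f,\widehat g\in l^2_L$, and analogously $\widehat f_\ast,\widehat g_\ast\in l^2_{L^\ast}$. Substituting $\mathcal F_L^{-1}\widehat g=g$ into \eqref{EQ: InnerProd SpSeq-s} collapses $\mathcal F_{L^\ast}\circ\mathcal F_L^{-1}\widehat g=\mathcal F_{L^\ast}g=\widehat g_\ast$, giving
\[
(\widehat f,\widehat g)_{l^2_L}=\sum_{\xi\in\ind}\widehat f(\xi)\,\overline{\widehat g_\ast(\xi)}.
\]
The same manoeuvre applied to \eqref{EQ: InnerProd SpSeq-s_2} produces the $L^\ast$-analogue, and comparing the two resulting sums yields $\overline{(\widehat f,\widehat g)_{l^2_L}}=(\widehat g_\ast,\widehat f_\ast)_{l^2_{L^\ast}}$ at once.

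For Parseval, I would substitute the $u$-expansion of $f$ into $(f,g)_{L^2}$ and exploit continuity in the first slot together with the identity $(u_\xi,g)_{L^2}=\overline{\widehat g_\ast(\xi)}$, which is just the definition \eqref{ConjFourierTr} of $\widehat g_\ast$ read backwards:
\[
(f,g)_{L^2}=\Bigl(\sum_\xi\widehat f(\xi)u_\xi,\,g\Bigr)_{L^2}=\sum_{\xi\in\ind}\widehat f(\xi)\,\overline{\widehat g_\ast(\xi)}.
\]
Combined with the previous step, this is Parseval. Specialising $g=f$ then yields $\|f\|_{L^2}^2=\|\widehat f\|_{l^2_L}^2$, and the identity $\|f\|_{L^2}=\|\widehat f_\ast\|_{l^2_{L^\ast}}$ follows from the symmetric argument (or, equivalently, from the conjugation relation just established).

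The only real obstacle is bookkeeping rather than conceptual: one must keep careful track of which transform ($\mathcal F_L$ or $\mathcal F_{L^\ast}$) pairs with which biorthogonal system ($\{u_\xi\}$ or $\{v_\xi\}$) in the asymmetric inner products defining $l^2_L$ and $l^2_{L^\ast}$. This design, already encoded in \eqref{EQ: InnerProd SpSeq-s} and \eqref{EQ: InnerProd SpSeq-s_2}, is precisely what makes the biorthogonality collapse the cross terms; absolute convergence of the resulting sums is secured by the Bari bounds of Lemma \ref{LEM: FTl2}.
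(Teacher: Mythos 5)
Your proof is correct and follows the natural route that the definitions invite: derive the $L^2$-expansions $f=\sum_\xi\widehat f(\xi)u_\xi$ and $f=\sum_\xi\widehat f_\ast(\xi)v_\xi$ from the basis hypothesis and biorthogonality, feed $\mathcal F_L^{-1}\widehat g=g$ and $\mathcal F_{L^\ast}g=\widehat g_\ast$ into the defining formula for $(\cdot,\cdot)_{l^2_L}$, then obtain Parseval by pairing the $u$-expansion of $f$ against $g$ and using $(u_\xi,g)_{L^2}=\overline{\widehat g_\ast(\xi)}$, with absolute convergence supplied by Lemma~\ref{LEM: FTl2} and Cauchy--Schwarz. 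The paper records this proposition as a preliminary imported from \cite{RT2016} without reproducing a proof, and your argument is the standard derivation one would expect there.
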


Now we introduce Sobolev spaces generated by the operator ${L}_{\Omega}$:

\begin{definition}[Sobolev spaces $\mathcal \mathcal{H}^{s}_{{L}}(\Omega)$] \label{SobSp}
For $f\in\mathcal D'_{{L}}(\Omega)\cap \mathcal D'_{{L}^{*}}(\Omega)$ and $s\in\mathbb R$, we say that
$$f\in\mathcal \mathcal{H}^{s}_{{L}}(\Omega)\, {\textrm{ if and only if }}\,
\langle\xi\rangle^{s}\widehat{f}(\xi)\in l^{2}_{{L}}.$$
We define the norm on $\mathcal \mathcal{H}^{s}_{{L}}(\Omega)$ by
\begin{equation}\label{SobNorm}
\|f\|_{\mathcal \mathcal{H}^{s}_{{
L}}(\Omega)}:=\left(\sum_{\xi\in\ind}
\langle\xi\rangle^{2s}\widehat{f}(\xi)\overline{\widehat{f}_{\ast}(\xi)}\right)^{1/2}.
\end{equation}
The Sobolev space $\mathcal \mathcal{H}^{s}_{{L}}(\Omega)$ is then the
space of ${L}$-distributions $f$ for which we have
$\|f\|_{\mathcal \mathcal{H}^{s}_{{L}}(\Omega)}<\infty$. Similarly,
we can define the
space $\mathcal \mathcal{H}^{s}_{{L^{\ast}}}(\Omega)$ by the
condition
\begin{equation}\label{SobNorm2}
\|f\|_{\mathcal \mathcal{H}^{s}_{{
L^{\ast}}}(\Omega)}:=\left(\sum_{\xi\in\ind}\langle\xi\rangle^{2s}\widehat{f}_{\ast}(\xi)\overline{\widehat{f}(\xi)}\right)^{1/2}<\infty.
\end{equation}
\end{definition}
We note that the expressions in \eqref{SobNorm} and 
\eqref{SobNorm2} are well-defined since the sum
$$
\sum_{\xi\in\ind}
\langle\xi\rangle^{2s}\widehat{f}(\xi)\overline{\widehat{f}_{\ast}(\xi)}=
(\langle\xi\rangle^{s}\widehat{f}(\xi),\langle\xi\rangle^{s}\widehat{f}(\xi))_{l^{2}_{L}}\geq 0
$$
is real and non-negative. 
Consequently, since we can write the sum in \eqref{SobNorm2} as the
complex conjugate of that in  \eqref{SobNorm}, and with both being real,
we see that the spaces $\mathcal \mathcal{H}^{s}_{{L}}(\Omega)$ and 
$\mathcal \mathcal{H}^{s}_{{L^{\ast}}}(\Omega)$ coincide as sets. Moreover, we have

\begin{proposition}\label{SobHilSpace}
For every $s\in\mathbb R$, the Sobolev space
$\mathcal \mathcal{H}^{s}_{{L}}(\Omega)$ is a Hilbert space with the
inner product
$$
(f,\ g)_{\mathcal \mathcal{H}^{s}_{{L}}(\Omega)}:=\sum_{\xi\in\ind
}\langle\xi\rangle^{2s}\widehat{f}(\xi)\overline{\widehat{g}_{\ast}(\xi)}.
$$
Similarly,
the Sobolev space
$\mathcal \mathcal{H}^{s}_{{L^{\ast}}}(\Omega)$ is a Hilbert space with
the inner product
$$
(f,\ g)_{\mathcal \mathcal{H}^{s}_{{
L^{\ast}}}(\Omega)}:=\sum_{\xi\in\ind}\langle\xi\rangle^{2s}\widehat{f}_{\ast}(\xi)\overline{\widehat{g}(\xi)}.
$$
For every $s\in\mathbb R$, the Sobolev spaces  $\mathcal \mathcal{H}^{s}_{{L}}(\Omega)$,
and $\mathcal \mathcal{H}^{s}_{{L}^*}(\Omega)$ are
isometrically isomorphic.
\end{proposition}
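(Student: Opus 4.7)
The strategy is to identify $\mathcal{H}^{s}_{L}(\Omega)$, equipped with the stated bilinear form, with the Hilbert space $l^{2}_{L}$ via a weighted $L$-Fourier transform. I would introduce the linear map
$$\Phi^{s}\colon\mathcal{H}^{s}_{L}(\Omega)\to l^{2}_{L},\qquad \Phi^{s}(f)(\xi):=\langle\xi\rangle^{s}\widehat{f}(\xi),$$
whose image lies in $l^{2}_{L}$ by the very definition of $\mathcal{H}^{s}_{L}(\Omega)$. This $\Phi^{s}$ is injective because $\mathcal{F}_{L}$ is, and surjective because for any $a\in l^{2}_{L}$ the distribution $f:=\mathcal{F}_{L}^{-1}(\langle\xi\rangle^{-s}a)$ lies in $\mathcal D'_{L}(\Omega)$ and satisfies $\Phi^{s}(f)=a$, whence $f\in\mathcal{H}^{s}_{L}(\Omega)$. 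The key identity $\|f\|^{2}_{\mathcal{H}^{s}_{L}(\Omega)}=(\langle\xi\rangle^{s}\widehat{f},\langle\xi\rangle^{s}\widehat{f})_{l^{2}_{L}}=\|\Phi^{s}(f)\|^{2}_{l^{2}_{L}}$ is already recorded in the excerpt, so $\Phi^{s}$ is in fact an isometry.

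Granted this, I would verify the inner product axioms. Sesquilinearity in $(f,g)\mapsto(f,g)_{\mathcal{H}^{s}_{L}(\Omega)}$ is immediate from the linearity of $\mathcal{F}_{L}$ and $\mathcal{F}_{L^{*}}$. Positive-definiteness follows from $\|f\|^{2}_{\mathcal{H}^{s}_{L}(\Omega)}=\|\Phi^{s}f\|^{2}_{l^{2}_{L}}\geq 0$ with equality only when $\Phi^{s}f=0$, equivalently $f=0$. Conjugate symmetry is obtained by polarization: a direct expansion of the quadratic expression shows that the parallelogram law holds for $\|\cdot\|_{\mathcal{H}^{s}_{L}(\Omega)}$, so the polarized bilinear form is a genuine complex inner product, and this polarized form recovers precisely $\sum_{\xi}\langle\xi\rangle^{2s}\widehat{f}(\xi)\overline{\widehat{g}_{*}(\xi)}$. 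Completeness of $\mathcal{H}^{s}_{L}(\Omega)$ is then transported from $l^{2}_{L}$ along $\Phi^{s}$: a Cauchy sequence $(f_{n})$ in $\mathcal{H}^{s}_{L}(\Omega)$ maps to a Cauchy sequence $(\Phi^{s}f_{n})$ in $l^{2}_{L}$, whose limit $a$ produces the limit $f=\mathcal{F}_{L}^{-1}(\langle\xi\rangle^{-s}a)\in\mathcal{H}^{s}_{L}(\Omega)$. The Hilbert space structure on $\mathcal{H}^{s}_{L^{*}}(\Omega)$ is obtained by the exactly parallel argument, replacing $L$ by $L^{*}$, $u_{\xi}$ by $v_{\xi}$, $\mathcal{F}_{L}$ by $\mathcal{F}_{L^{*}}$, and $l^{2}_{L}$ by $l^{2}_{L^{*}}$ throughout.

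For the isometric isomorphism, the excerpt has already observed that the defining sums for $\|f\|^{2}_{\mathcal{H}^{s}_{L}(\Omega)}$ and $\|f\|^{2}_{\mathcal{H}^{s}_{L^{*}}(\Omega)}$ are complex conjugates of one and the same real non-negative quantity, hence are equal; in particular the two spaces coincide as sets and the identity map is norm-preserving. In a complex Hilbert space the norm determines the inner product via polarization, so the identity map also intertwines the two inner products and is therefore the desired isometric isomorphism. The principal technical point in executing this plan is not conceptual but a careful bookkeeping of the two Fourier transforms $\mathcal{F}_{L}$, $\mathcal{F}_{L^{*}}$ and their interaction via the biorthogonality $(u_{\xi},v_{\eta})_{L^{2}}=\delta_{\xi\eta}$; it is this biorthogonality, together with Lemma~\ref{LEM: FTl2}, that reduces the weighted Sobolev sums to $l^{2}_{L}$-pairings and underlies the reality and positivity of $(f,f)_{\mathcal{H}^{s}_{L}(\Omega)}$.
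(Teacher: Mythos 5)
The paper does not actually supply a proof of Proposition~\ref{SobHilSpace}; it is stated as imported background from \cite{RT2016}, with only the side remark following Definition~\ref{SobSp} that the diagonal sum equals $(\langle\xi\rangle^{s}\widehat{f},\langle\xi\rangle^{s}\widehat{f})_{l^{2}_{L}}\geq 0$, and that the $L$- and $L^{*}$-expressions are conjugates of one another. So there is no in-paper proof to compare against, and your proposal must be judged on its own.

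Your plan --- conjugate the problem through $\Phi^{s}(f)=\langle\cdot\rangle^{s}\widehat{f}$ into $l^{2}_{L}$, transport the inner product, completeness, and isometry from there --- is the natural argument, and it does work given the paper's asserted identity $\|f\|^{2}_{\mathcal{H}^{s}_{L}}=\|\Phi^{s}f\|^{2}_{l^{2}_{L}}$. Two points deserve tightening. First, the surjectivity and completeness steps both produce a candidate $f=\mathcal{F}_{L}^{-1}(\langle\cdot\rangle^{-s}a)$ and implicitly assert that it lies in $\mathcal{D}'_{L}(\Omega)\cap\mathcal{D}'_{L^{*}}(\Omega)$, which is the domain required by Definition~\ref{SobSp}; membership in $\mathcal{D}'_{L}(\Omega)$ is automatic but membership in $\mathcal{D}'_{L^{*}}(\Omega)$ needs at least one line (e.g.\ by noting $a\in l^{2}_{L}$ is polynomially bounded so $\mathcal{F}_{L}^{-1}(\langle\cdot\rangle^{-s}a)\in L^{2}(\Omega)$ after a further weight, or by arguing directly that the pairing against $C^{\infty}_{L}(\overline\Omega)$ converges). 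Second, the detour through the parallelogram law and polarization for conjugate symmetry is valid but heavier than necessary: the form $(f,g)\mapsto\sum_{\xi}\langle\xi\rangle^{2s}\widehat{f}(\xi)\overline{\widehat{g}_{*}(\xi)}$ is manifestly sesquilinear, and a sesquilinear form whose diagonal $(f,f)$ is real for all $f$ is automatically Hermitian; since the paper already tells you the diagonal is real and non-negative, Hermitian symmetry and positive-definiteness follow at once without invoking the parallelogram law. With those two refinements the argument is complete and matches what \cite{RT2016} would have in mind; the final identification of $\mathcal{H}^{s}_{L}(\Omega)$ with $\mathcal{H}^{s}_{L^{*}}(\Omega)$ via the coincidence of norms and polarization is exactly right.
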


\subsection{$L$-Schwartz kernel theorem}
\label{SEC:Schwartz}

This subsection is devoted to discuss the Schwartz kernel theorem in the space
of distributions $\mathcal D'_{{L}}(\Omega).$ 
In this analysis we will need the following assumption which may be
also regarded as the definition of the number $s_{0}$. So, from now on we will make the
following:

\begin{assump}
\label{Assumption_4}
Assume that the number
$s_0\in\mathbb R$ is such that we have
$$\sum_{\xi\in\ind} \langle\xi\rangle^{-s_0}<\infty.$$
\end{assump}
Recalling the operator ${L}^{\circ}$ in \eqref{EQ:Lo-def}
the assumption \eqref{Assumption_4} is equivalent to assuming that
the operator $({\rm I}+{L^\circ L})^{-\frac{s_0}{4m}}$ is Hilbert-Schmidt on $L^2(\Omega)$.\\

{\it{
Indeed, recalling the definition of $\langle\xi\rangle$ in \eqref{EQ:angle},
namely that $\langle\xi\rangle$ are the eigenvalues of $({\rm I}+{L^\circ L})^{-\frac{s_0}{2m}}$,
that the operator $({\rm I}+{L^\circ L})^{-\frac{s_0}{4m}}$ is Hilbert-Schmidt on $L^2(\Omega),$ is equivalent to
the condition that}}
\begin{equation}\label{EQ:HS-conv}
\|({\rm I}+{L^\circ L})^{-\frac{s_0}{4m}}\|_{\tt HS}^2\cong \sum_{\xi\in\ind}
\langle\xi\rangle^{-s_0}<\infty.
\end{equation}
\begin{remark}If $L$ is elliptic, we may expect that we can take any $s_0>n:=\textnormal{dim}(\Omega)$ but this depends on the
boundary conditions in general.
The order $s_0$ will enter the regularity properties of the Schwartz kernels.
\end{remark}
We will use the notation
$$C^{\infty}_{{L}}(\overline{\Omega}\times \overline{\Omega}):=
C^{\infty}_{{L}}(\overline{\Omega})\otimes C^{\infty}_{{L}}(\overline{\Omega}),$$
and for the corresponding dual space we write
$ \mathcal D'_{{L}}(\Omega\times\Omega):=
\left(C^{\infty}_{{L}}(\overline{\Omega}\times \overline{\Omega})\right)^\prime.$
By following \cite{RT2016},
to any continuous linear operator  $A:C^{\infty}_{{
L}}(\overline{\Omega})\rightarrow\mathcal D'_{{L}}(\Omega),$ we can associate a  kernel $K\in \mathcal D'_{{L}}(\Omega\times\Omega)$ such that
$$
\langle Af,g\rangle=\int\limits_{\Omega}\int\limits_{\Omega}K(x,y)f(x)g(y)dxdy,
$$
and, using the notion of the ${L}$-convolution, also a convolution kernel $k_{A}(x)\in\mathcal D'_{{L}}(\Omega)$, such that
$$
Af(x)=(k_{A}(x)\sL f)(x),
$$ provided that  $\{u_{\xi}: \,\,\, \xi\in\ind\}$ is a ${\rm WZ}$-system
in the sense of Definition \ref{DEF: WZ-system}.
As usual, $K_{A}$ is called the Schwartz kernel of $A$. Note that, by using the Fourier series formula for $f\in C^{\infty}_{{L}}(\overline{\Omega})$, 
$$
f(y)=\sum\limits_{\eta\in\ind}\widehat{f}(\eta) u_{\eta}(y),
$$
we can also write
\begin{equation}\label{EQ:int2}
Af(x)=\sum\limits_{\eta\in\ind}\widehat{f}(\eta)\int\limits_{\Omega}K_{A}(x,y)u_{\eta}(y)dy,
\end{equation}  
and the ${L}$-distribution $k_{A}\in\mathcal D'_{{L}}(\Omega\times\Omega)$ is determined
by the formula
\begin{equation} \label{EQ: KernelConv}
k_{A}(x,z):=k_A(x)(z):=\sum\limits_{\eta\in\ind}u_{\eta}^{-1}(x)
\int\limits_{\Omega}K_{A}(x,y)u_{\eta}(y)dy \,
u_{\eta}(z).
\end{equation}
Since for some $C>0$ and $N\geq0,$ we have, by Definition \ref{DEF: WZ-system},
$$
\inf\limits_{x\in\overline{\Omega}}|u_{\eta}(x)|\geq
C\langle\eta\rangle^{-N},
$$
the series in (\ref{EQ: KernelConv}) is converges in the sense of
${L}$-distributions.
Formula \eqref{EQ: KernelConv} means that the Fourier transform of $k_A$ in the
second variable satisfies
\begin{equation}\label{EQ:int3}
 \widehat{k}_{A} (x,\eta)u_{\eta}(x)=
\int\limits_{\Omega}K_{A}(x,y)u_{\eta}(y)dy.
\end{equation}
Combining this and \eqref{EQ:int2} we get
\begin{equation*}
Af(x)=\sum\limits_{\eta\in\ind}
\widehat{f}(\eta)\int\limits_{\Omega}K_{A}(x,y)u_{\eta}(y)dy
=\sum\limits_{\eta\in\ind}\widehat{f}(\eta)\widehat{k}_{A}(x,\eta)u_{\eta}(x)
=(f\sL k_{A}(x))(x),
\end{equation*}
where in the last equality we used the notion of the $L$-convolution in Definition \ref{Convolution}.

\subsection{${L}$-Quantization and and full symbols}
\label{SEC:quantization}

In this subsection we describe the ${L}$-quantization induced by the boundary value problem
${L}_\Omega$. From now on we will assume
that the system of functions $\{u_{\xi}:\, \xi\in\ind\}$ is a ${\rm WZ}$-system
in the sense of Definition \ref{DEF: WZ-system}. Later, we will make some remarks
on what happens when this assumption is not satisfied.

\begin{definition}[${L}$-Symbols of operators on $\Omega$] \label{$L$--Symbols}
The ${L}$-symbol of a linear continuous
operator $$A:C^{\infty}_{{L}}(\overline{\Omega})\rightarrow
\mathcal D'_{{L}}(\Omega)$$ at $x\in\Omega$ and
$\xi\in\ind$ is defined by
$$\sigma_{A}(x, \xi):=\widehat{k_{A}(x)}(\xi)=\mathcal F_{{L}}(k_{A}(x))(\xi).$$
Hence, we can also write
$$\sigma_{A}(x, \xi)=\int\limits_{\Omega}k_{A}(x,y)\overline{v_{\xi}(y)}dy=
\langle k_{A}(x),\overline{v_{\xi}}\rangle.$$
\end{definition}

By the ${L}$-Fourier inversion formula the convolution kernel
can be recovered from the symbol:
\begin{equation}
\label{Kernel} k_{A}(x, y)=\sum_{\xi\in\ind}\sigma_{A}(x,
\xi)u_{\xi}(y),
\end{equation}
all in the sense of ${L}$-distributions. We now show that an
operator $A$ can be represented by its symbol \cite{RT2016}.

\begin{theorem}[${L}$--quantization] \label{QuanOper}
Let $ A:C^{\infty}_{{L}}(\overline{\Omega})\rightarrow
C^{\infty}_{{L}}(\overline{\Omega})$ be a continuous linear
operator with {L}-symbol $\sigma_{A}$. Then
\begin{equation}\label{Quantization}
Af(x)=\sum_{\xi\in\ind}
u_{\xi}(x)\sigma_{A}(x, \xi) \widehat{f}(\xi)
\end{equation}
for every $f\in C^{\infty}_{{L}}(\overline{\Omega})$ and
$x\in\Omega$.
The {L}-symbol $\sigma_{A}$ satisfies
\begin{equation}\label{FormSymb}
\sigma_{A}(x,\xi)=u_{\xi}(x)^{-1}(Au_{\xi})(x)
\end{equation}
for all $x\in\Omega$ and $\xi\in\ind$.
\end{theorem}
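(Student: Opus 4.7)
The proof will proceed directly from the material already developed, principally the Schwartz kernel representation in Section~\ref{SEC:Schwartz} and the definition of the $L$-convolution in Definition~\ref{Convolution}.

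First I would recall from the preceding subsection that, since the system $\{u_\xi\}_{\xi\in\ind}$ is a WZ-system, every continuous linear operator $A:C^\infty_{L}(\overline\Omega)\to\mathcal D'_{L}(\Omega)$ admits both a Schwartz kernel $K_A\in\mathcal D'_{L}(\Omega\times\Omega)$ and a convolution kernel $k_A(x)\in\mathcal D'_{L}(\Omega)$ related by \eqref{EQ: KernelConv}, and that the identity $Af(x)=(f\sL k_A(x))(x)$ was already established at the end of Section~\ref{SEC:Schwartz}. Combining this with the very definition of the $L$-convolution \eqref{EQ: CONV1} gives
\[
Af(x) = (f\sL k_A(x))(x) = \sum_{\xi\in\ind}\widehat{f}(\xi)\,\widehat{k_A(x)}(\xi)\,u_\xi(x).
\]
By Definition~\ref{$L$--Symbols}, $\widehat{k_A(x)}(\xi)=\sigma_A(x,\xi)$, and rearranging the factors yields \eqref{Quantization}. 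Convergence of the series in the $L$-distributional sense is guaranteed because for $f\in C^\infty_L(\overline\Omega)$ the Fourier coefficients $\widehat f(\xi)$ decay rapidly, while $\sigma_A(x,\cdot)\in\mathcal S'(\ind)$ has at most polynomial growth in $\xi$.

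For the symbol-recovery identity \eqref{FormSymb}, I would simply test \eqref{Quantization} on $f=u_\xi$. The biorthogonality relation $(u_\xi,v_\eta)_{L^2}=\delta_{\xi,\eta}$ combined with the definition \eqref{FourierTr} of the $L$-Fourier transform shows that
\[
\widehat{u_\xi}(\eta) = \int_\Omega u_\xi(x)\,\overline{v_\eta(x)}\,dx = \delta_{\xi,\eta},
\]
so the series in \eqref{Quantization} collapses to the single term
\[
(Au_\xi)(x) = u_\xi(x)\,\sigma_A(x,\xi).
\]
Dividing by $u_\xi(x)$, which is permitted by the WZ-assumption $\inf_{x\in\overline\Omega}|u_\xi(x)|\geq C\langle\xi\rangle^{-N}>0$, yields \eqref{FormSymb}.

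There is no real obstacle here; the only subtle point worth double-checking is the justification of the rearrangement of the double series implicit in passing from $Af(x)=(f\sL k_A(x))(x)$ to \eqref{Quantization} when $A$ maps into $\mathcal D'_L(\Omega)$ rather than into smooth functions. Since our hypothesis is that $A$ actually maps $C^\infty_L(\overline\Omega)$ into itself, the footnote to Definition~\ref{Convolution} applies verbatim, so the convolution is well-defined and the sum converges in $C^\infty_L(\overline\Omega)$; this is what allows the pointwise evaluation at $x$ used in the argument, and closes the proof.
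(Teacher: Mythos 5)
Your proof is correct and follows essentially the same route the paper lays out: the chain of equalities at the end of Section~\ref{SEC:Schwartz}, namely $Af(x)=\sum_{\eta}\widehat{f}(\eta)\,\widehat{k}_A(x,\eta)\,u_\eta(x)=(f\sL k_A(x))(x)$, together with Definition~\ref{$L$--Symbols} identifying $\sigma_A(x,\xi)=\widehat{k_A(x)}(\xi)$, already gives \eqref{Quantization}, and your derivation of \eqref{FormSymb} by testing on $f=u_\xi$, using biorthogonality to get $\widehat{u_\xi}(\eta)=\delta_{\xi,\eta}$ and then dividing by $u_\xi(x)\ne 0$ (WZ assumption), is exactly the intended argument. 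Your discussion of convergence via the rapid decay of $\widehat f$ against the polynomial growth of $\sigma_A(x,\cdot)\in\mathcal S'(\ind)$ correctly matches the footnote to Definition~\ref{Convolution} and closes the only potential gap.
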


Now,
we  collect several formulae for the symbol under the assumption that
the biorthogonal system $u_\xi$ is a WZ-system:\footnote{In the case when $\{u_{\xi}: \, \xi\in\ind\}$ is not a
WZ-system, we can still understand the ${L}$-symbol
$\sigma_{A}$ of the operator $A$ as a function on
$\overline{\Omega}\times\ind$, for which the equality
$
u_{\xi}(x)\sigma_{A}(x,\xi)=\int\limits_{\Omega}K_{A}(x,y)u_{\xi}(y)dy
$
holds for all $\xi$ in $\ind$ and for $x\in\overline{\Omega}$.
Of course, this implies certain restrictions on the zeros of the
Schwartz kernel $K_A$. Such restrictions may be considered
natural from the point of view of the scope of problems that
can be treated by our approach in the case when the eigenfunctions
$u_\xi(x)$ may vanish at some points $x$. We refer to \cite{RT2017} for the calculus without the WZ-condition.}

\begin{corollary}\label{COR: SymFor}
We have the following equivalent formulae for {L}-symbols:
\begin{align*}
{\rm (i)} \,\,\,\,\, \sigma_{A}(x, \xi)
&=\int\limits_{\Omega}k_{A}(x,y)\overline{v_{\xi}(y)}dy;\\
{\rm (ii)} \,\,\,\,\, \sigma_{A}(x, \xi)&=u_{\xi}^{-1}(x)(Au_{\xi})(x);\\
{\rm (iii)} \,\,\,\,\,
\sigma_{A}(x,\xi)&=u_{\xi}^{-1}(x)\int\limits_{\Omega}K_{A}(x,y)u_{\xi}(y)dy;\\
{\rm (iv)} \,\,\,\,\, \sigma_{A}(x,
\xi)&=u_{\xi}^{-1}(x)\int\limits_{\Omega}\int\limits_{\Omega}F(x,y,z)k_{A}(x,y)
u_{\xi}(z)dydz.
\end{align*}
Here and in the sequel we write $u_{\xi}^{-1}(x)=u_{\xi}(x)^{-1}.$
Formula {\rm (iii)} also implies
\begin{align*}
{\rm (v)} \,\,\,\,\, K_A(x,y)&=
\sum_{\xi\in\ind}  u_\xi(x) \sigma_A(x,\xi) \overline{v_\xi(y)}.
\end{align*}
\end{corollary}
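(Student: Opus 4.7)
The plan is to notice that each of the five equations is a short consequence of material already in place, so the task is essentially to organise a chain of implications rather than perform any real computation.

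First, (i) is purely a matter of unfolding definitions: by the definition of the $L$-symbol one has $\sigma_A(x,\xi)=\mathcal F_L(k_A(x))(\xi)$, and \eqref{FourierTr} rewrites this as $\int_\Omega k_A(x,y)\overline{v_\xi(y)}\,dy$. Formula (ii) is exactly \eqref{FormSymb}, which is already established in Theorem \ref{QuanOper}; no further argument is needed there.

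For (iii) I would combine (ii) with the defining property of the Schwartz kernel $K_A$. Testing on $f=u_\xi$ yields $(Au_\xi)(x)=\int_\Omega K_A(x,y)u_\xi(y)\,dy$; dividing by $u_\xi(x)$ --- which is permitted by the standing WZ-assumption --- and comparing with (ii) produces (iii), which one may also read off directly from \eqref{EQ:int3}. Formula (v) is then obtained by Fourier-inverting (iii) in the second variable: for each fixed $x$, view $K_A(x,\cdot)$ as an element of $\mathcal D'_{{L^*}}(\Omega)$ and apply the complex-conjugated version of the $L^{*}$-inversion formula \eqref{ConjInvFourierTr0} to get $K_A(x,y)=\sum_{\xi\in\ind}\overline{v_\xi(y)}\int_\Omega K_A(x,y')u_\xi(y')\,dy'$; substituting $\int_\Omega K_A(x,y')u_\xi(y')\,dy'=u_\xi(x)\sigma_A(x,\xi)$ from (iii) produces (v).

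Formula (iv) is a Fubini-type rewriting: the auxiliary function $F(x,y,z)$ plays the role of the kernel encoding the passage from the convolution kernel $k_A(x,y)$ back to the Schwartz kernel in accordance with \eqref{EQ: KernelConv}, and (iv) is obtained by inserting that representation into (i) and invoking (ii). The one step that requires care is the convergence of the series in (v), since $K_A$ is only an $L$-distribution in general; this I would handle by pairing with a test function $\varphi\in C^\infty_{{L}}(\overline{\Omega})$ in the second variable and using the completeness of $\mathcal D'_{{L}}(\Omega)$ from Theorem \ref{TH: Com-nessDistr} to justify the interchange of summation and integration, so that the expansion holds in the distributional sense. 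Beyond this convergence issue, which is the main (and only) obstacle, the rest is pure bookkeeping.
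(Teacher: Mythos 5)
Your derivation is correct and matches the intended route: the paper states this corollary without proof (it is recalled from the earlier work \cite{RT2016}), and each identity follows exactly as you describe --- (i) is the definition of the $L$-symbol combined with \eqref{FourierTr}, (ii) is \eqref{FormSymb}, (iii) comes from testing the Schwartz kernel on $u_\xi$ as in the computation leading to \eqref{EQ:int3}, (iv) is the Fubini rewriting via the kernel $F(x,y,z)$ linking $k_A$ to $K_A$ as in \eqref{EQ: KernelConv}, and (v) is the conjugated $L^*$-inversion formula \eqref{ConjInvFourierTr0} applied to $K_A(x,\cdot)$ with all series understood in the sense of $L$-distributions. Your care about the distributional convergence of (v) is consistent with the paper's own convention stated after \eqref{Kernel}.
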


Similarly, we can introduce an analogous notion of the ${L^{\ast}}$-quantization.

\begin{definition}[${L^{\ast}}$-Symbols of operators on $\Omega$] \label{$L$--Symbols_star}
The ${L^{\ast}}$-symbol of a linear
continuous operator $$A:C^{\infty}_{{
L^{\ast}}}(\overline{\Omega})\rightarrow \mathcal D'_{{L}^*}(\Omega)$$ 
at $x\in{\Omega}$ and
$\xi\in\ind$ is defined by
$$\tau_{A}(x, \xi):=\mathcal F_{{L^{\ast}}}(\widetilde{k}_{A}(x))(\xi).$$
We can also write
$$\tau_{A}(x, \xi)=\int\limits_{\Omega}\widetilde{k}_{A}(x,y)\overline{u_{\xi}(y)}dy=
\langle\widetilde{k}_{A}(x),\overline{u_{\xi}}\rangle.$$
\end{definition}

By the ${L^{\ast}}$-Fourier inversion formula the convolution
kernel can be regained from the symbol:
\begin{equation}
\label{Kernel_star} \widetilde{k}_{A}(x, y)=\sum_{\xi\in\ind}\tau_{A}(x, \xi)v_{\xi}(y)
\end{equation}
in the sense of ${L^{\ast}}$--distributions.
Analogously to the $L$-quantization, we have:

\begin{corollary}[${L^{\ast}}$-quantization] \label{QuanOper_star}
Let $\tau_{A}$ be the ${L}^*$-symbol of a
continuous linear operator $ A:C^{\infty}_{{
L^{\ast}}}(\overline{\Omega})\rightarrow C^{\infty}_{{
L^{\ast}}}(\overline{\Omega}).$  Then
\begin{equation}
\label{Quantization_star} Af(x)=\sum_{\xi\in\ind}
v_{\xi}(x) \tau_{A}(x, \xi)  \widehat{f}_{\ast}(\xi)
\end{equation}
for every $f\in C^{\infty}_{{L^{\ast}}}(\overline{\Omega})$
and $x\in\Omega$.
For all
$x\in\Omega$ and $\xi\in\ind$, we have
\begin{equation}
\label{FormSymb_star} \tau_{A}(x,
\xi)=v_{\xi}(x)^{-1}(Av_{\xi})(x).
\end{equation}
We also have the following equivalent formulae for the ${L^{\ast}}$-symbol:
\begin{align*}
{\rm (i)} \,\,\,\,\, \tau_{A}(x, \xi)
&=\int\limits_{\Omega}\widetilde{k}_{A}(x,y)\overline{u_{\xi}(y)}dy;\\
{\rm (ii)} \,\,\,\,\,
\tau_{A}(x,\xi)&=v_{\xi}^{-1}(x)\int\limits_{\Omega}\widetilde{K}_{A}(x,y)v_{\xi}(y)dy.\\
\end{align*}
\end{corollary}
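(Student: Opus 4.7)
The plan is to mirror the argument for the $L$-quantization (Theorem \ref{QuanOper}) by systematically swapping the roles $L \leftrightarrow L^*$, $u_\xi \leftrightarrow v_\xi$, $\widehat{\,\cdot\,} \leftrightarrow \widehat{\,\cdot\,}_*$, and $\sL \leftrightarrow \sLs$. Concretely, a continuous linear operator $A:C^{\infty}_{{L^{\ast}}}(\overline{\Omega})\to C^{\infty}_{{L^{\ast}}}(\overline{\Omega})$ fits the $L^*$-Schwartz kernel framework (the analogue of Subsection \ref{SEC:Schwartz} applied with the roles of $\{u_\xi\}$ and $\{v_\xi\}$ exchanged, which is legitimate because the two biorthogonal systems are on equal footing and the $v_\xi$ are assumed WZ). Thus $A$ admits an $L^*$-Schwartz kernel $\widetilde{K}_A\in \mathcal D'_{L^*}(\Omega\times\Omega)$ and a convolution kernel $\widetilde{k}_A(x)\in\mathcal D'_{L^*}(\Omega)$ with $Af(x)=(\widetilde{k}_A(x)\sLs f)(x)$.

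First I would expand $f\in C^{\infty}_{L^*}(\overline{\Omega})$ via the conjugate inversion formula \eqref{ConjInvFourierTr0}, $f(y)=\sum_\eta \widehat{f}_*(\eta)v_\eta(y)$, and insert this into the kernel representation of $A$. Using the $L^*$-analogue of \eqref{EQ:int3}, namely $\widehat{\widetilde{k}}_{A,*}(x,\xi)\,v_\xi(x)=\int_\Omega \widetilde{K}_A(x,y)v_\xi(y)dy$, together with Definition \ref{$L$--Symbols_star} ($\tau_A(x,\xi)=\mathcal F_{L^*}(\widetilde{k}_A(x))(\xi)$) and the definition of $\sLs$ in \eqref{EQ: CONV2}, I obtain
\begin{equation*}
Af(x)=\sum_{\eta\in\ind}\widehat{f}_*(\eta)\int_\Omega \widetilde{K}_A(x,y)v_\eta(y)dy
=\sum_{\eta\in\ind}\widehat{f}_*(\eta)\,\tau_A(x,\eta)\,v_\eta(x),
\end{equation*}
which is exactly \eqref{Quantization_star}.

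Next, to derive \eqref{FormSymb_star} I substitute $f=v_\eta$ in \eqref{Quantization_star}. By biorthogonality $(u_\xi,v_\eta)_{L^2}=\delta_{\xi,\eta}$, the $L^*$-Fourier coefficients of $v_\eta$ are
\begin{equation*}
\widehat{v_\eta}_*(\xi)=\int_\Omega v_\eta(x)\overline{u_\xi(x)}dx=\overline{(u_\xi,v_\eta)_{L^2}}=\delta_{\xi,\eta},
\end{equation*}
so \eqref{Quantization_star} collapses to $Av_\eta(x)=v_\eta(x)\tau_A(x,\eta)$, and the WZ-condition on $\{v_\xi\}$ allows division by $v_\xi(x)$, yielding \eqref{FormSymb_star}. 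Formula (i) is just Definition \ref{$L$--Symbols_star} rewritten, while (ii) follows from (i) by substituting the $L^*$-version of \eqref{EQ: KernelConv} expressing $\widetilde{k}_A(x,y)$ in terms of $\widetilde{K}_A$, then using orthogonality to isolate the $\xi$-th term.

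The only genuinely delicate point — and the only step where an obstacle could appear — is verifying that the various manipulations (interchanging the summation over $\eta$ with the integral against $\widetilde{K}_A$, and the convergence of the series defining $\widetilde{k}_A$ in $\mathcal D'_{L^*}(\Omega)$) are justified. This is handled exactly as in the $L$-case: the rapid decay of $\widehat{f}_*(\eta)$ for $f\in C^\infty_{L^*}(\overline{\Omega})$ and the polynomial lower bound $|v_\xi(x)|\ge C\langle\xi\rangle^{-N}$ from the WZ-assumption (Definition \ref{DEF: WZ-system}), combined with the polynomial growth characterisation of $\mathcal S'(\ind)$, give absolute convergence and permit every interchange. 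No new idea beyond the $L$-case is required; the proof is complete once the dictionary $L \leftrightarrow L^*$ has been applied verbatim to the argument in Subsections \ref{SEC:Schwartz}–\ref{SEC:quantization}.
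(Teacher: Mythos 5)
Your proposal is correct and is precisely the argument the paper has in mind: the paper gives no independent proof of Corollary \ref{QuanOper_star}, prefacing it with ``Analogously to the $L$-quantization, we have,'' so the expected argument is exactly the $L \leftrightarrow L^*$ transcription of Theorem \ref{QuanOper} that you carry out, including the use of $\widehat{(v_\eta)}_*(\xi)=\overline{(u_\xi,v_\eta)_{L^2}}=\delta_{\xi,\eta}$ to collapse \eqref{Quantization_star} into \eqref{FormSymb_star} and the $L^*$-version of \eqref{EQ:int3} for formula (ii).
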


\subsection{Difference operators and symbolic calculus}
\label{SEC:differences}
In this subsection we discuss difference operators that will be instrumental in defining symbol
classes for the symbolic calculus of operators. 

Let $q_{j}\in C^{\infty}({\Omega}\times{\Omega})$, $j=1,\ldots,l$, be a given family
of smooth functions.
We will call the collection of $q_j$'s { $L$-strongly admissible} if the following properties hold:
\begin{itemize}
\item For every $x\in\Omega$, the multiplication by $q_{j}(x,\cdot)$
is a continuous linear mapping on
 $C^{\infty}_{{L}}(\overline{\Omega})$, for all $j=1,\ldots,l$;
\item $q_{j}(x,x)=0$ for all $j=1,\ldots,l$;
\item $
{\rm rank}(\nabla_{y}q_{1}(x,y), \ldots, \nabla_{y}q_{l}(x,y))|_{y=x}=n;
$
\item  the diagonal in $\Omega\times\Omega$ is the only set when all of
$q_j$'s vanish:
$$
\bigcap_{j=1}^l \left\{(x,y)\in\Omega\times\Omega: \, q_j(x,y)=0\right\}=\{(x,x):\, x\in\Omega\}.
$$
\end{itemize}

We note that the first property above implies that for every $x\in\Omega$, the multiplication by
$q_{j}(x,\cdot)$ is also well-defined and extends to a continuous linear mapping on
$\mathcal D'_{{L}}(\Omega)$. Also, the last property above contains the second one
but we chose to still give it explicitly for the clarity of the exposition.

The collection of $q_j$'s with the above properties generalises the notion of a strongly
admissible collection of functions for difference operators introduced in
\cite{Ruzhansky-Turunen-Wirth:JFAA} in the context of compact Lie groups.
We will use the multi-index notation
$$
q^{\alpha}(x,y):=q^{\alpha_1}_{1}(x,y)\cdots q^{\alpha_l}_{l}(x,y).
$$
Analogously, the notion of an ${L}^{*}$-strongly admissible collection suitable for the
conjugate problem is that of a family
$\widetilde{q}_{j}\in C^{\infty}({\Omega}\times{\Omega})$, $j=1,\ldots,l$, satisfying the properties:
\begin{itemize}
\item For every $x\in\Omega$, the multiplication by $\widetilde{q}_{j}(x,\cdot)$
is a continuous linear mapping on
 $C^{\infty}_{{L}^{*}}(\overline{\Omega})$, for all $j=1,\ldots,l$;
\item $\widetilde{q}_{j}(x,x)=0$ for all $j=1,\ldots,l$;
\item $
{\rm rank}(\nabla_{y}\widetilde{q}_{1}(x,y), \ldots, \nabla_{y}\widetilde{q}_{l}(x,y))|_{y=x}=n;
$
\item  the diagonal in $\Omega\times\Omega$ is the only set when all of
$\widetilde{q}_j$'s vanish:
$$
\bigcap_{j=1}^l \left\{(x,y)\in\Omega\times\Omega: \, \widetilde{q}_j(x,y)=0\right\}=\{(x,x):\, x\in\Omega\}.
$$
\end{itemize}
We also write
$$
\widetilde{q}^{\alpha}(x,y):=\widetilde{q}^{\alpha_1}_{1}(x,y)\cdots
\widetilde{q}^{\alpha_l}_{l}(x,y).
$$
We now record the Taylor expansion formula with respect to a family of $q_j$'s,
which follows from expansions of functions $g$ and
$q^{\alpha}(e,\cdot)$ by the common Taylor series   \cite{RT2016}:

\begin{proposition}\label{TaylorExp}
Any smooth function $g\in C^{\infty}({\Omega})$ can be
approximated by Taylor polynomial type expansions, i.e. for $e\in\Omega$, we have
$$g(x)=\sum_{|\alpha|<
N}\frac{1}{\alpha!}D^{(\alpha)}_{x}g(x)|_{x=e}\, q^{\alpha}(e,x)+\sum_{|\alpha|=
N}\frac{1}{\alpha!}q^{\alpha}(e,x)g_{N}(x)
$$
\begin{equation}
\sim\sum_{\alpha\geq
0}\frac{1}{\alpha!}D^{(\alpha)}_{x}g(x)|_{x=e}\, q^{\alpha}(e,x)
\label{TaylorExpFormula}
\end{equation}
in a neighborhood of $e\in\Omega$, where $g_{N}\in
C^{\infty}({\Omega})$ and
$D^{(\alpha)}_{x}g(x)|_{x=e}$ can be found from the recurrent formulae:
$D^{(0,\cdots,0)}_{x}:=I$ and for $\alpha\in\mathbb N_0^l$,
$$
\mathsf
\partial^{\beta}_{x}g(x)|_{x=e}=\sum_{|\alpha|\leq|\beta|}\frac{1}{\alpha!}
\left[\mathsf
\partial^{\beta}_{x}q^{\alpha}(e,x)\right]\Big|_{x=e}D^{(\alpha)}_{x}g(x)|_{x=e},
$$
where $\beta=(\beta_1, \ldots, \beta_n)$ and
$
\partial^{\beta}_{x}=\frac{\partial^{\beta_{1}}}{\partial x_{1}^{\beta_{1}}}\cdots
\frac{\partial^{\beta_{n}}}{\partial x_{n}^{\beta_{n}}}.
$
\end{proposition}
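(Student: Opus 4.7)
The plan is to reduce the assertion to the ordinary multivariable Taylor expansion around $e$ by recognising that the vanishing order of $q^{\alpha}(e,\cdot)$ at $x=e$ is controlled by $|\alpha|$. First, since $q_{j}(x,x)=0$ for every $j$, each factor $q_{j}(e,x)$ vanishes at $x=e$, so $q^{\alpha}(e,x)$ vanishes to order at least $|\alpha|$ at $e$; consequently its ordinary Taylor expansion about $e$ has the form
\begin{equation*}
q^{\alpha}(e,x)=\sum_{|\beta|\geq|\alpha|}\frac{1}{\beta!}\bigl[\partial^{\beta}_{x}q^{\alpha}(e,x)\bigr]\bigl|_{x=e}(x-e)^{\beta}+O(|x-e|^{N}).
\end{equation*}
Second, the rank condition on $(\nabla_{y}q_{j}(x,y))|_{y=x}$ guarantees that the block of coefficients $\bigl[\partial^{\beta}_{x}q^{\alpha}(e,x)\bigr]\big|_{x=e}$ with $|\alpha|=|\beta|$ is (after selecting a suitable $n$-tuple among the $q_{j}$'s) non-degenerate in the appropriate sense, so that products $q^{\alpha}(e,\cdot)$ with $|\alpha|\leq N$ span all polynomial directions of degree at most $N$ at $e$.

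Next I would carry out the bookkeeping. Writing the usual multivariable Taylor expansion
\begin{equation*}
g(x)=\sum_{|\beta|<N}\frac{1}{\beta!}\partial^{\beta}_{x}g(e)(x-e)^{\beta}+\tilde{R}_{N}(x),
\end{equation*}
and substituting the Taylor expansion of $q^{\alpha}(e,x)$ into the desired identity $g(x)\sim\sum_{\alpha}\frac{1}{\alpha!}D^{(\alpha)}g(e)\,q^{\alpha}(e,x)$, I would match coefficients of $(x-e)^{\beta}$. The vanishing-order observation shows that only terms with $|\alpha|\leq|\beta|$ contribute to the coefficient of $(x-e)^{\beta}$, giving exactly the stated recurrence
\begin{equation*}
\partial^{\beta}_{x}g(e)=\sum_{|\alpha|\leq|\beta|}\frac{1}{\alpha!}\bigl[\partial^{\beta}_{x}q^{\alpha}(e,x)\bigr]\big|_{x=e}D^{(\alpha)}_{x}g(x)|_{x=e}.
\end{equation*}
Since this is triangular in $|\alpha|$ with the leading block of size $|\alpha|=|\beta|$ invertible by the rank hypothesis, one may solve it inductively on $|\alpha|$: set $D^{(0)}g(e)=g(e)$, then for each $N\geq 1$ use the $|\beta|=N$ equations (with already-determined lower-order $D^{(\alpha)}$'s moved to the right-hand side) to solve for $D^{(\alpha)}g(e)$ with $|\alpha|=N$.

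Finally, for the remainder, I would apply the standard Taylor integral remainder for $g$ at $e$; by the rank condition any smooth function vanishing to order $N$ at $e$ admits a representation $\sum_{|\alpha|=N}q^{\alpha}(e,x)g_{N,\alpha}(x)$ with smooth coefficients (a Hadamard-type lemma applied in local coordinates adapted to the $q_{j}$'s), which yields the claimed remainder form $\sum_{|\alpha|=N}\frac{1}{\alpha!}q^{\alpha}(e,x)g_{N}(x)$. The asymptotic relation in \eqref{TaylorExpFormula} then follows by letting $N\to\infty$ in the sense of formal power series at $e$. The main obstacle is the potential overdetermination of the recurrence when $l>n$: with more functions $q_{j}$ than dimensions, the choice of $D^{(\alpha)}g(e)$ for $|\alpha|\geq 1$ is not unique, and one must verify that any consistent choice provided by the rank condition produces the same asymptotic expansion modulo higher-order terms in $q^{\alpha}(e,x)$. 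This is precisely where the admissibility of the family $\{q_{j}\}$ is essential: the relations among the $q_{j}$'s near the diagonal force any two admissible solutions of the recurrence to differ only by quantities that can be absorbed into higher-order terms of the expansion.
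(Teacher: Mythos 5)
Your proof follows exactly the route the paper indicates: the proposition is stated without a proof, with only the remark that it ``follows from expansions of functions $g$ and $q^{\alpha}(e,\cdot)$ by the common Taylor series'' (citing \cite{RT2016}), and your sketch --- expand $g$ and each $q^{\alpha}(e,\cdot)$ in ordinary Taylor series at $e$, match coefficients of $(x-e)^{\beta}$, invert the resulting triangular system via the rank condition, and use a Hadamard-type factorization for the remainder --- is precisely that argument made explicit. Two small points: the linear system defining $D^{(\alpha)}g(e)$ at level $|\alpha|=N$ is \emph{under}determined (not overdetermined) when $l>n$, since there are $\binom{N+l-1}{l-1}$ unknowns but only $\binom{N+n-1}{n-1}$ equations, and this is exactly why the $D^{(\alpha)}$'s are not unique; and your remainder bookkeeping with a family $g_{N,\alpha}$ indexed by $\alpha$ is in fact more accurate than the single $g_{N}$ appearing in the statement.
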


Analogously, any function $g\in C^{\infty}({\Omega})$
can be approximated by Taylor polynomial type expansions corresponding to
the adjoint problem, i.e. we
have
$$g(x)=\sum_{|\alpha|<
N}\frac{1}{\alpha!}\widetilde{D}^{(\alpha)}_{x}g(x)|_{x=e}\, \widetilde{q}^{\alpha}(e,x)+\sum_{|\alpha|=
N}\frac{1}{\alpha!}\widetilde{q}^{\alpha}(e,x)g_{N}(x)
$$
\begin{equation}
\sim\sum_{\alpha\geq
0}\frac{1}{\alpha!}\widetilde{D}^{(\alpha)}_{x}g(x)|_{x=e}\, \widetilde{q}^{\alpha}(e,x)
\label{TaylorExpFormula}
\end{equation}
in a neighborhood of $e\in\Omega$, where $g_{N}\in
C^{\infty}({\Omega})$ and
$\widetilde{D}^{(\alpha)}_{x}g(x)|_{x=e}$ are found from the
recurrent formula: $\widetilde{D}^{(0,\cdots,0)}:=I$ and for
$\alpha\in\mathbb N_{0}^{l}$,
$$
\partial^{\beta}_{x}g(x)|_{x=e}=\sum_{|\alpha|\leq|\beta|}\frac{1}{\alpha!}
\left[
\partial^{k}_{x}\widetilde{q}^{\alpha}(e,x)\right]\Big|_{x=e}\widetilde{D}^{(\alpha)}_{x}g(x)|_{x=e},
$$
where $\beta=(\beta_1, \ldots, \beta_n)$, and $\partial^{\beta}$ is defined as in
Proposition \ref{TaylorExp}.

It can be seen that operators $D^{(\alpha)}$ and
$\widetilde{D}^{(\alpha)}$ are differential operators of order
$|\alpha|$.
We now define difference operators acting on Fourier coefficients.
Since the problem in general may lack any invariance or symmetry structure,
the introduced difference operators will depend on a point $x$ where they
will be taken when applied to symbols.

\begin{definition}\label{DEF: DifferenceOper}\label{DEF: DifferenceOper_2}
For WZ-systems, we define difference operator $\Delta_{q,(x)}^{\alpha}$ acting
on Fourier coefficients by any of the following equal expressions
\begin{align*}
\Delta_{q,(x)}^{\alpha}\widehat{f}(\xi)& = u_{\xi}^{-1}(x)
\int\limits_{\Omega}\Big[\int\limits_{\Omega}q^{\alpha}(x,y)F(x,y,z)f(z)dz\Big]u_{\xi}(y)dy
\\
& = u_{\xi}^{-1}(x)
\sum_{\eta\in\ind}\mathcal
F_{L}\Big(q^{\alpha}(x,\cdot)u_{\xi}(\cdot)\Big)(\eta)\widehat{f}(\eta)u_{\eta}(x)
\\
& = u_{\xi}^{-1}(x) \left([q^{\alpha}(x,\cdot)u_{\xi}(\cdot)]\sL
f\right)(x).
\end{align*}
Analogously, we define the difference operator
$\widetilde{\Delta}_{q,(x)}^{\alpha}$ acting on adjoint Fourier
coefficients by
$$
\widetilde{\Delta}_{\widetilde{q},(x)}^{\alpha}\widehat{f}_{\ast}(\xi):=
v_{\xi}^{-1}(x)\sum_{\eta\in\ind}\mathcal F_{{
L^{\ast}}}\Big(\widetilde{q}^{\alpha}(x,\cdot)v_{\xi}(\cdot)\Big)(\eta)\widehat{f}_{\ast}(\eta)v_{\eta}(x).
$$
\end{definition}
For simplicity, if there is no confusion, for a fixed collection
of $q_j$'s, instead of $\Delta_{q,(x)}$ and
$\widetilde{\Delta}_{\widetilde{q},(x)}$ we will often simply write
$\Delta_{(x)}$ and $\widetilde{\Delta}_{(x)}$.

\begin{remark}
Applying difference operators to a symbol
and using formulae from Section \ref{SEC:quantization}, we
obtain
\begin{align}\nonumber
\Delta_{(x)}^{\alpha}a(x,\xi)&=u_{\xi}^{-1}(x)\sum_{\eta\in\ind}\mathcal
F_{L}\Big(q^{\alpha}(x,\cdot)u_{\xi}(\cdot)\Big)(\eta)a(x,\eta)u_{\eta}(x)\\ \nonumber
&=u_{\xi}^{-1}(x)\sum_{\eta\in\ind}\mathcal
F_{L}\Big(q^{\alpha}(x,\cdot)u_{\xi}(\cdot)\Big)(\eta)\int\limits_{\Omega}K(x,y)u_{\eta}(y)dy\\ \nonumber
&=u_{\xi}^{-1}(x)\int\limits_{\Omega}K(x,y)\left[\sum_{\eta\in\ind}\mathcal
F_{L}\Big(q^{\alpha}(x,\cdot)u_{\xi}(\cdot)\Big)(\eta)u_{\eta}(y)\right]dy\\
&=u_{\xi}^{-1}(x)\int\limits_{\Omega}q^{\alpha}(x,y)K(x,y)u_{\xi}(y)dy. \label{EQ:diff-symb}
\end{align}
In view of the first property of the strongly admissible collections, for each
$x\in\Omega$, the multiplication
by $q^{\alpha}(x,\cdot)$ is well defined on $\mathcal D'_{{L}}(\Omega)$.
Therefore, we can write \eqref{EQ:diff-symb} also in the distributional form
$$
 \Delta_{(x)}^{\alpha}a(x,\xi) = u_{\xi}^{-1}(x)\,
 \langle q^{\alpha}(x,\cdot)K(x,\cdot),u_{\xi}\rangle.
$$
\end{remark}

Plugging the expression (v) from Corollary \ref{COR: SymFor} for the kernel in terms of the symbol
into \eqref{EQ:diff-symb}, namely, using
$$
K(x,y)=
\sum_{\eta\in\ind}  u_\eta(x) a(x,\eta) \overline{v_\eta(y)},
$$
we record another useful form of \eqref{EQ:diff-symb} to be used later as
\begin{align}\nonumber
 \Delta_{(x)}^{\alpha}a(x,\xi) &=
 u_{\xi}^{-1}(x)\int\limits_{\Omega}q^{\alpha}(x,y)\left[
 \sum_{\eta\in\ind}  u_\eta(x) a(x,\eta) \overline{v_\eta(y)} \right]u_{\xi}(y)dy \\
 & = u_{\xi}^{-1}(x)   \sum_{\eta\in\ind}u_\eta(x) a(x,\eta)
 \left[ \int\limits_{\Omega}q^{\alpha}(x,y) \overline{v_\eta(y)} u_{\xi}(y)dy\right],
 \label{EQ:diff-symb-2}
\end{align}
with the usual distributional interpretation of all the steps. In the sequel we will also require
the ${L^*}$-version of this formula, which we record now as
\begin{equation}\label{EQ:diff-symb-3}
  \widetilde\Delta_{(x)}^{\alpha}a(x,\xi)=
  v_{\xi}^{-1}(x)   \sum_{\eta\in\ind}v_\eta(x) a(x,\eta)
 \left[ \int\limits_{\Omega}\widetilde q^{\alpha}(x,y) \overline{u_\eta(y)} v_{\xi}(y)dy\right].
\end{equation}

Using such difference operators and derivatives $D^{(\alpha)}$ from
Proposition \ref{TaylorExp}
we can now define classes of symbols.

\begin{definition}[Symbol class $S^m_{\rho,\delta}(\overline{\Omega}\times\ind)$]\label{DEF: SymClass}
Let $m\in\mathbb R$ and $0\leq\delta,\rho\leq 1$. The ${L}$-symbol class
$S^m_{\rho,\delta}(\overline{\Omega}\times\ind)$ consists of
those functions $a(x,\xi)$ which are smooth in $x$ for all
$\xi\in\ind$, and which satisfy
\begin{equation}\label{EQ:symbol-class}
  \left|\Delta_{(x)}^\alpha D^{(\beta)}_{x} a(x,\xi) \right|
        \leq C_{a\alpha\beta m}
                \ \langle\xi\rangle^{m-\rho|\alpha|+\delta|\beta|}
\end{equation}
for all $x\in\overline{\Omega}$, for all $\alpha,\beta\geq 0$, and for all $\xi\in\ind$.
Here the operators $D^{(\beta)}_{x}$ are defined in Proposition
\ref{TaylorExp}. We will often denote them simply by $D^{(\beta)}$.

The class $S^m_{1,0}(\overline{\Omega}\times\ind)$ will be often
denoted by writing simply $S^m(\overline{\Omega}\times\ind)$.
In \eqref{EQ:symbol-class}, we assume that the inequality is satisfied for $x\in\Omega$ and
it extends to the closure $\overline\Omega$.
Furthermore, we define
$$
S^{\infty}_{\rho,\delta}(\overline{\Omega}\times\ind):=\bigcup\limits_{m\in\mathbb
R}S^{m}_{\rho,\delta}(\overline{\Omega}\times\ind)
$$
and
$$
S^{-\infty}(\overline{\Omega}\times\ind):=\bigcap\limits_{m\in\mathbb
R}S^{m}(\overline{\Omega}\times\ind).
$$
When we have two $L$-strongly admissible collections, expressing one in terms of
the other similarly to Proposition \ref{TaylorExp} and arguing similarly to
\cite{Ruzhansky-Turunen-Wirth:JFAA}, we can convince ourselves that for $\rho>\delta$ the
definition of the symbol class does not depend on the choice of an
 $L$-strongly admissible collection.

Analogously, we define the ${L^{\ast}}$-symbol class
$\widetilde{S}^m_{\rho,\delta}(\overline{\Omega}\times\ind)$
as the space of those functions $a(x,\xi)$ which are smooth in $x$ for
all $\xi\in\ind$, and which satisfy
\begin{equation*}
  \left|\widetilde{\Delta}_{(x)}^\alpha \widetilde{D}^{(\beta)} a(x,\xi) \right|
        \leq C_{a\alpha\beta m}
                \ \langle\xi\rangle^{m-\rho|\alpha|+\delta|\beta|}
\end{equation*}
for all $x\in\overline{\Omega}$, for all $\alpha,\beta\geq 0$, and for all $\xi\in\ind$.
Similarly, we can define classes
$\widetilde{S}^{\infty}_{\rho,\delta}(\overline{\Omega}\times\ind)$
and $\widetilde{S}^{-\infty}(\overline{\Omega}\times\ind)$.
\end{definition}

If $a\in S^m_{\rho,\delta}(\overline{\Omega}\times\ind)$, it is convenient to
denote by $a(X,D)= {\rm Op}_L (a)$  the corresponding ${
L}$-pseudo-differential operator defined by
\begin{equation}\label{EQ: L-tor-pseudo-def}
  {\rm Op}_L(a)f(x)=a(X,D)f(x):=\sum_{\xi\in\ind} u_{\xi}(x)\ a(x,\xi)\widehat{f}(\xi).
\end{equation}
The set of operators $ {\rm Op}_L (a)$ of the form
(\ref{EQ: L-tor-pseudo-def}) with $a\in
S^m_{\rho,\delta}(\overline{\Omega}\times\ind)$ will be denoted by
$ {\rm Op}_L (S^m_{\rho,\delta} (\overline{\Omega}\times\ind))$, or by
$\Psi^m_{\rho,\delta} (\overline{\Omega}\times\ind)$. If an
operator $A$ satisfies $A\in{\rm
Op_L}(S^m_{\rho,\delta}(\overline{\Omega}\times\ind))$, we denote
its ${L}$-symbol by $\sigma_{A}=\sigma_{A}(x, \xi), \,\,
x\in\overline{\Omega}, \, \xi\in\ind$.

\begin{remark}\label{REM: Topology of SymClass}
{\rm (Topology on $S^{m}_{\rho,
\delta}(\overline{\Omega}\times\ind)$ ($\widetilde{S}^{m}_{\rho,
\delta}(\overline{\Omega}\times\ind)$)).} The set $S^{m}_{\rho,
\delta}(\overline{\Omega}\times\ind)$ ($\widetilde{S}^{m}_{\rho,
\delta}(\overline{\Omega}\times\ind)$) of symbols has a natural
topology. Let us consider the functions $p_{\alpha\beta}^{l}:
S^{m}_{\rho,
\delta}(\overline{\Omega}\times\ind)\rightarrow\mathbb R$
($\widetilde{p}_{\alpha\beta}^{l}: \widetilde{S}^{m}_{\rho,
\delta}(\overline{\Omega}\times\ind)\rightarrow\mathbb R$) defined
by
$$
p_{\alpha\beta}^{l}(\sigma):={\rm
sup}\left[{\left|\Delta_{(x)}^{\alpha}D^{(\beta)}\sigma(x,
\xi)\right|}{\langle\xi\rangle^{-l+\rho|\alpha|-\delta|\beta|}}:\,\,
(x, \xi)\in\overline{\Omega}\times\ind\right]
$$
$$
\left(\widetilde{p}_{\alpha\beta}^{l}(\sigma):={\rm
sup}\left[{\left|\widetilde{\Delta}_{(x)}^{\alpha}\widetilde{D}^{(\beta)}\sigma(x,
\xi)\right|}{\langle\xi\rangle^{-l+\rho|\alpha|-\delta|\beta|}}:\,\,
(x, \xi)\in\overline{\Omega}\times\ind\right]\right).
$$
Now $\{p_{\alpha\beta}^{l}\}$
($\{\widetilde{p}_{\alpha\beta}^{l}\}$) is a countable family of seminorms,
and they define a
Fr\'echet topology on $S^{m}_{\rho,
\delta}(\overline{\Omega}\times\ind)$
($\widetilde{S}^{m}_{\rho, \delta}(\overline{\Omega}\times\mathbb
Z)$). 
\end{remark}

The next theorem is a prelude to asymptotic expansions, which are
the main tool in the symbolic analysis of ${
L}$-pseudo-differential operators.

\begin{theorem}[Asymptotic sums of symbols] Let $(m_{j})_{j=0}^{\infty}\subset\mathbb
R$ be a sequence such that $m_{j}>m_{j+1}$, and
$m_{j}\rightarrow-\infty$ as $j\rightarrow\infty$, and
$\sigma_{j}\in
S^{m_{j}}_{\rho,\delta}(\overline{\Omega}\times\ind)$ for all
$j\in\ind$. Then there exists an ${L}$-symbol $\sigma\in
S^{m_{0}}_{\rho,\delta}(\overline{\Omega}\times\ind)$ such that
for all $N\in\ind$,
$$
\sigma\stackrel{m_{N},\rho,\delta}{\sim}\sum_{j=0}^{N-1}\sigma_{j}.
$$
\end{theorem}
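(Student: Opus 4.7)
The plan is a Borel-type summation adapted to the discrete $L$-frequency variable. Pick a cutoff $\chi\in C^\infty(\mathbb{R})$ with $0\leq\chi\leq 1$, $\chi(t)=0$ for $t\leq 1$, and $\chi(t)=1$ for $t\geq 2$. For an increasing sequence $t_j\to\infty$ to be chosen inductively, set $\chi_j(\xi):=\chi(\langle\xi\rangle/t_j)$ and define
$$
\sigma(x,\xi) := \sum_{j=0}^{\infty} \chi_j(\xi)\,\sigma_j(x,\xi).
$$
Since $\{\xi\in\ind:\langle\xi\rangle\leq R\}$ is finite for every $R>0$, at each fixed $\xi$ only finitely many terms contribute (those with $t_j\leq 2\langle\xi\rangle$), so $\sigma$ is pointwise well defined.

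I would then fix the $t_j$ by a diagonal argument against the Fr\'echet seminorms $p_{\alpha\beta}^{l}$ of Remark \ref{REM: Topology of SymClass}: choose $t_j$ large enough so that
$$
p_{\alpha\beta}^{m_{j-1}}(\chi_j\sigma_j)\leq 2^{-j}\qquad\text{whenever }|\alpha|+|\beta|\leq j.
$$
This is possible because $\chi_j$ vanishes for $\langle\xi\rangle<t_j$ and $m_j<m_{j-1}$: on the support of $\chi_j$,
$$
\langle\xi\rangle^{m_j-\rho|\alpha|+\delta|\beta|}=\langle\xi\rangle^{m_j-m_{j-1}}\langle\xi\rangle^{m_{j-1}-\rho|\alpha|+\delta|\beta|}\leq t_j^{m_j-m_{j-1}}\langle\xi\rangle^{m_{j-1}-\rho|\alpha|+\delta|\beta|},
$$
so the prefactor $t_j^{m_j-m_{j-1}}\to 0$ as $t_j\to\infty$ can be made as small as needed.

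The main obstacle is controlling $\Delta^{\alpha}_{(x)}(\chi_j\sigma_j)$, since the difference operators are non-local in the frequency variable and do not obey a naive Leibniz rule. The derivatives $D^{(\beta)}_x$ cause no trouble, because $\chi_j$ is $x$-independent, so $D^{(\beta)}_x(\chi_j\sigma_j)=\chi_j\,D^{(\beta)}_x\sigma_j$. For $\Delta^{\alpha}_{(x)}$ I would argue through the kernel representation \eqref{EQ:diff-symb-2}: the identity
$$
\Delta^{\alpha}_{(x)}(\chi_j\sigma_j)(x,\xi)=u_{\xi}^{-1}(x)\sum_{\eta\in\ind}u_\eta(x)\chi_j(\eta)\sigma_j(x,\eta)\int_{\Omega}q^{\alpha}(x,y)\overline{v_\eta(y)}u_{\xi}(y)\,dy
$$
shows that the insertion of $\chi_j(\eta)$ amounts to a frequency truncation of the inner sum to $\langle\eta\rangle\gtrsim t_j$, whose effect can be absorbed into the $S^{m_j}_{\rho,\delta}$-bound already available for $\Delta^{\alpha}_{(x)}\sigma_j$ and upgraded to an $S^{m_{j-1}}_{\rho,\delta}$-estimate through the gain $t_j^{m_j-m_{j-1}}$ described above.

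Finally, with the $t_j$ fixed I would verify the asymptotic expansion by decomposing, for each $N\geq 0$,
$$
\sigma-\sum_{j=0}^{N-1}\sigma_j=\sum_{j=0}^{N-1}(\chi_j-1)\sigma_j+\sum_{j=N}^{\infty}\chi_j\sigma_j.
$$
Each term of the first (finite) sum is compactly supported in $\xi\in\ind$ and hence lies in $S^{-\infty}(\overline{\Omega}\times\ind)$. For the tail, any fixed seminorm $p_{\alpha\beta}^{m_N}$ receives only finitely many contributions from indices $j<\max(N+1,|\alpha|+|\beta|)$, while the remaining contributions are controlled by $2^{-j}$ and are therefore summable, yielding convergence in $S^{m_N}_{\rho,\delta}(\overline{\Omega}\times\ind)$. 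Taking $N=0$ in particular shows $\sigma\in S^{m_0}_{\rho,\delta}(\overline{\Omega}\times\ind)$, completing the construction.
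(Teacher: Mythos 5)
Your plan — Borel summation with $\xi$-only cutoffs $\chi_j(\xi)=\chi(\langle\xi\rangle/t_j)$ and a diagonal choice of $t_j$ against the seminorms of Remark \ref{REM: Topology of SymClass} — is the right strategy, and you correctly identify the genuine obstruction: the difference operators $\Delta^{\alpha}_{(x)}$ in this calculus are not finite differences and do not obey a Leibniz rule, so multiplication by $\chi_j$ is not an innocuous operation on symbols. However, the argument you offer to resolve this does not actually close the gap. The quantity you need to make small is
\begin{equation*}
p_{\alpha\beta}^{m_{j-1}}(\chi_j\sigma_j)
=\sup_{(x,\xi)\in\overline\Omega\times\ind}
\big|\Delta^{\alpha}_{(x)}D^{(\beta)}_x(\chi_j\sigma_j)(x,\xi)\big|\,
\langle\xi\rangle^{-m_{j-1}+\rho|\alpha|-\delta|\beta|},
\end{equation*}
and the supremum runs over \emph{all} $\xi$, not just $\langle\xi\rangle\geq t_j$. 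Formula \eqref{EQ:diff-symb-2} does confine the summation variable $\eta$ to $\langle\eta\rangle\gtrsim t_j$, but it imposes no restriction on the external variable $\xi$: the kernel $\int_{\Omega}q^{\alpha}(x,y)\overline{v_\eta(y)}u_{\xi}(y)\,dy$ couples $\eta$ and $\xi$ nonlocally in general, so $\Delta^{\alpha}_{(x)}(\chi_j\sigma_j)(x,\xi)$ may be neither zero nor small when $\langle\xi\rangle\ll t_j$. The factorization $\langle\xi\rangle^{m_j-m_{j-1}}\leq t_j^{m_j-m_{j-1}}$ that produces your gain only applies on the set $\langle\xi\rangle\geq t_j$, so without a quantitative localization statement for the $\eta$--$\xi$ kernel you cannot promote the available $S^{m_j}_{\rho,\delta}$-bound to smallness of the $S^{m_{j-1}}_{\rho,\delta}$-seminorm.

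Two further points need to be established, not merely asserted. First, you need $\chi_j\sigma_j\in S^{m_j}_{\rho,\delta}(\overline\Omega\times\ind)$ with constants uniform in $t_j$; in the Euclidean or toroidal setting this follows from the discrete Leibniz rule together with the fact that $\chi(\cdot/t_j)$ is an order-zero symbol with $t_j$-independent seminorms, but neither fact is available here without proof (indeed, checking that $\chi_j$ satisfies $|\Delta^{\alpha}_{(x)}\chi_j(\xi)|\lesssim\langle\xi\rangle^{-\rho|\alpha|}$ in the sense of Definition \ref{DEF: SymClass} is itself non-trivial). Second, your claim that each $(\chi_j-1)\sigma_j$ lies in $S^{-\infty}(\overline\Omega\times\ind)$ because it vanishes for $\langle\xi\rangle>2t_j$ is again a locality claim: $\Delta^{\alpha}_{(x)}\big((\chi_j-1)\sigma_j\big)(x,\xi)$ involves a sum over the finitely many $\eta$ with $\langle\eta\rangle\leq 2t_j$, but nothing in the framework guarantees that this quantity decays rapidly as $\langle\xi\rangle\to\infty$. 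To turn your sketch into a proof you would need a lemma, specific to this calculus, controlling how $\Delta^{\alpha}_{(x)}$ acts on products with $\xi$-only cutoffs (or, equivalently, a decay estimate for $\int_{\Omega}q^{\alpha}(x,y)\overline{v_\eta(y)}u_{\xi}(y)\,dy$ away from the diagonal $\eta\approx\xi$), and no such lemma appears among the facts you invoke.
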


We will now look at the formulae in \cite{RT2016} for the symbol of the adjoint operator and for the composition of pseudo-differential operators, which establish the pseudo-differential calculus for boundary value problems from the non-harmonic point of view.

\begin{theorem}[Adjoint operators]
Let $0\leq\delta<\rho\leq 1$. Let $A\in {\rm Op}_L
(S^m_{\rho,\delta}(\overline{\Omega}\times\ind))$.
Assume that the conjugate symbol
class $\widetilde{S}^{m}_{\rho,\delta}(\overline{\Omega}\times\ind)$
is defined with strongly admissible
functions $\widetilde{q}_{j}(x,y):=\overline{q_{j}(x,y)}$ which are ${L}^{*}$-strongly admissible.
Then the adjoint of $A$ satisfies
$A^{\ast}\in {\rm Op}_{L^*}(\widetilde{S}^{m}_{\rho,\delta}(\overline{\Omega}\times\ind))$,
with its ${L}^*$-symbol
$\tau_{A^*}\in \widetilde{S}^{m}_{\rho,\delta}(\overline{\Omega}\times\ind)$
having the asymptotic expansion
$$
\tau_{A^*}(x,\xi) \sim \sum_\alpha \frac{1}{\alpha!}
\widetilde \Delta_x^\alpha D_x^{(\alpha)}\overline{\sigma_A(x,\xi)}.
$$
\end{theorem}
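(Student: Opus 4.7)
The plan is to compute the $L^{*}$-symbol of $A^{*}$ directly from its Schwartz kernel, then convert the resulting double integral/series into an asymptotic expansion in the difference operators $\widetilde\Delta_{(x)}^{\alpha}$ by applying the Taylor-type formula relative to the family $\widetilde q_{j}=\overline{q_{j}}$.

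First, I would exploit that the Schwartz kernel of the formal adjoint is
$K_{A^{*}}(x,y)=\overline{K_{A}(y,x)}$, and then use formula (v) of Corollary \ref{COR: SymFor} to write
$K_{A}(y,x)=\sum_{\eta\in\ind} u_{\eta}(y)\sigma_{A}(y,\eta)\overline{v_{\eta}(x)}$, so that
$\overline{K_{A}(y,x)}=\sum_{\eta\in\ind}\overline{u_{\eta}(y)}\,\overline{\sigma_{A}(y,\eta)}\,v_{\eta}(x)$.
Inserting this into formula (ii) of Corollary \ref{QuanOper_star} for the $L^{*}$-symbol,
$$
\tau_{A^{*}}(x,\xi)=v_{\xi}^{-1}(x)\int_{\Omega}K_{A^{*}}(x,y)v_{\xi}(y)\,dy,
$$
I would interchange summation and integration (justified in the sense of $L^{*}$-distributions using the rapid decay/polynomial bound arguments available in Section \ref{SEC:Schwartz}) to obtain
$$
\tau_{A^{*}}(x,\xi)=v_{\xi}^{-1}(x)\sum_{\eta\in\ind}v_{\eta}(x)\int_{\Omega}\overline{\sigma_{A}(y,\eta)}\,\overline{u_{\eta}(y)}\,v_{\xi}(y)\,dy.
$$

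Next, I would apply the Taylor-type expansion of Proposition \ref{TaylorExp} (in the $\widetilde q$ version) to the smooth $y$-dependent function $\overline{\sigma_{A}(y,\eta)}$ at the base point $y=x$:
$$
\overline{\sigma_{A}(y,\eta)}=\sum_{|\alpha|<N}\frac{1}{\alpha!}\widetilde D_{x}^{(\alpha)}\overline{\sigma_{A}(x,\eta)}\,\widetilde q^{\alpha}(x,y)+R_{N}(x,y,\eta),
$$
and plug it in. The leading terms in the expansion are exactly
$$
\sum_{|\alpha|<N}\frac{1}{\alpha!}\,v_{\xi}^{-1}(x)\sum_{\eta\in\ind}v_{\eta}(x)\widetilde D_{x}^{(\alpha)}\overline{\sigma_{A}(x,\eta)}\int_{\Omega}\widetilde q^{\alpha}(x,y)\overline{u_{\eta}(y)}v_{\xi}(y)\,dy,
$$
which, by the explicit formula \eqref{EQ:diff-symb-3} for $\widetilde\Delta_{(x)}^{\alpha}$ applied to the symbol $\overline{\sigma_{A}(\cdot,\cdot)}$ (with the roles of $L$ and $L^{*}$ swapped accordingly), is precisely
$\sum_{|\alpha|<N}\frac{1}{\alpha!}\widetilde\Delta_{(x)}^{\alpha}\widetilde D_{x}^{(\alpha)}\overline{\sigma_{A}(x,\xi)}$. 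This yields the claimed expansion formally.

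The main obstacle will be controlling the remainder $R_{N}$ so that the partial sums give an \emph{asymptotic} expansion in $\widetilde S^{m}_{\rho,\delta}(\overline\Omega\times\ind)$, i.e.\ that the $N$-th remainder belongs to $\widetilde S^{m-(\rho-\delta)N}_{\rho,\delta}$. This is exactly where the hypothesis $\rho>\delta$ enters: each difference $\widetilde\Delta_{(x)}^{\alpha}$ gains $\langle\xi\rangle^{-\rho|\alpha|}$ while each derivative $\widetilde D_{x}^{(\beta)}$ costs $\langle\xi\rangle^{\delta|\beta|}$, so the combined gain per Taylor order is $\rho-\delta>0$. To make this rigorous I would apply $\widetilde\Delta_{(x)}^{\beta}\widetilde D_{x}^{(\gamma)}$ to the integral representation of the remainder, then use the strong admissibility of the $\widetilde q_{j}$ and the decay properties of $\sigma_{A}\in S^{m}_{\rho,\delta}$ (together with the asymptotic-sum theorem for symbols stated just above) to bound the remainder by $C\langle\xi\rangle^{m-(\rho-\delta)N-\rho|\beta|+\delta|\gamma|}$. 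Finally, that $\tau_{A^{*}}\in\widetilde S^{m}_{\rho,\delta}$ follows from the $N=1$ case of this bound combined with the leading-order identity $\widetilde\Delta_{(x)}^{0}\widetilde D_{x}^{(0)}\overline{\sigma_{A}(x,\xi)}=\overline{\sigma_{A}(x,\xi)}\in S^{m}_{\rho,\delta}$, which lies in $\widetilde S^{m}_{\rho,\delta}$ under the coordinate-change argument indicated after Definition \ref{DEF: SymClass}.
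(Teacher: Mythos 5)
The paper does not actually prove this theorem: it is imported verbatim from \cite{RT2016} ("We will now look at the formulae in \cite{RT2016} for the symbol of the adjoint operator\dots"). So there is no in-paper argument to compare against, and your proposal has to stand on its own merits.

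Your plan is the standard and correct one for this setting: start from $K_{A^{*}}(x,y)=\overline{K_{A}(y,x)}$, substitute formula (v) of Corollary~\ref{COR: SymFor} for $K_{A}$, read off $\tau_{A^{*}}$ from Corollary~\ref{QuanOper_star}(ii), Taylor-expand $\overline{\sigma_A(y,\eta)}$ at $y=x$ in the $\widetilde q$-system, and recognise each term via \eqref{EQ:diff-symb-3}. That is exactly how the analogous adjoint theorems are proved in the compact-Lie-group predecessor \cite{Ruzhansky-Turunen-Wirth:JFAA} and in \cite{RT2016}, and your identification of the $\rho>\delta$ hypothesis as the mechanism driving the asymptotic gain $(\rho-\delta)$ per Taylor order is correct. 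The one caveat is that you leave the remainder estimate at the level of a plan; a complete argument also needs the standard localisation step (split the $y$-integral into a neighbourhood of the diagonal, where the Taylor expansion applies, and an off-diagonal part which contributes a symbol in $\widetilde S^{-\infty}$ by the admissibility and non-vanishing properties of the $\widetilde q_{j}$'s). You acknowledge this, so I treat it as an honest sketch rather than a gap.

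One point worth flagging explicitly, since it looks like more than a typographical slip at first glance: your Taylor expansion naturally produces the coefficients $\widetilde D_{x}^{(\alpha)}\overline{\sigma_{A}(x,\eta)}$, while the statement as printed reads $D_{x}^{(\alpha)}\overline{\sigma_{A}(x,\xi)}$. In fact your version is the one the computation yields. Since $\widetilde q_{j}=\overline{q_{j}}$, the recurrence defining $D^{(\alpha)}$ and $\widetilde D^{(\alpha)}$ gives $\widetilde D^{(\alpha)}\bar g=\overline{D^{(\alpha)}g}$, so the correct expansion can equivalently be written $\sum_{\alpha}\frac{1}{\alpha!}\widetilde\Delta_{(x)}^{\alpha}\overline{D_{x}^{(\alpha)}\sigma_{A}(x,\xi)}$; this differs from $D_{x}^{(\alpha)}\overline{\sigma_{A}(x,\xi)}$ in general because $D^{(\alpha)}$ has complex coefficients built from $q$. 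The paper's printed form should be read as a notational shortcut for the conjugated derivative. It is worth recording this reconciliation in a final write-up, since otherwise a reader will think the result you derived is not the one claimed.
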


We now formulate the composition formula given \cite{RT2016}.

\begin{theorem}\label{Composition}
Let $m_{1}, m_{2}\in\mathbb R$ and $\rho>\delta\geq0$. Let $A,
B:C_{{L}}^{\infty}(\overline{\Omega})\rightarrow C_{{
L}}^{\infty}(\overline{\Omega})$ be continuous and linear, and assume that
their {L}-symbols satisfy
\begin{align*}
|\Delta_{(x)}^{\alpha}\sigma_{A}(x,\xi)|&\leq
C_{\alpha}\langle\xi\rangle^{m_{1}-\rho|\alpha|},\\
|D^{(\beta)}\sigma_{B}(x,\xi)|&\leq
C_{\beta}\langle\xi\rangle^{m_{2}+\delta|\beta|},
\end{align*}
for all $\alpha,\beta\geq 0$, uniformly in $x\in\overline{\Omega}$ and
$\xi\in\ind$.
Then
\begin{equation}
\sigma_{AB}(x,\xi)\sim\sum_{\alpha\geq 0}
\frac{1}{\alpha!}(\Delta_{(x)}^{\alpha}\sigma_{A}(x,\xi))D^{(\alpha)}\sigma_{B}(x,\xi),
\label{CompositionForm}
\end{equation}
where the asymptotic expansion means that for every $N\in\mathbb N$ we have
$$
|\sigma_{AB}(x,\xi)-\sum_{|\alpha|<N}\frac{1}{\alpha!}(\Delta_{(x)}^{\alpha}\sigma_{A}(x,\xi))D^{(\alpha)}\sigma_{B}(x,\xi)|\leq
C_{N}\langle\xi\rangle^{m_{1}+m_{2}-(\rho-\delta)N}.
$$
\end{theorem}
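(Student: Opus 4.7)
The plan is to derive the composition formula directly from the $L$-quantization of $A$ and $B$, then to apply the Taylor expansion of Proposition \ref{TaylorExp} with respect to the strongly admissible family $\{q_j\}$, and finally to identify the resulting coefficients as difference operators applied to $\sigma_A$ via the kernel identity \eqref{EQ:diff-symb-2}.

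First I would compute $\sigma_{AB}(x,\eta)$ using formula (ii) of Corollary \ref{COR: SymFor}, namely $\sigma_{AB}(x,\eta)=u_\eta^{-1}(x)(ABu_\eta)(x)$. The same formula applied to $B$ gives $Bu_\eta(x)=u_\eta(x)\,\sigma_B(x,\eta)$, so by the $L$-quantization \eqref{Quantization} of $A$ applied to the function $y\mapsto u_\eta(y)\sigma_B(y,\eta)$ and by the definition of the $L$-Fourier transform,
\begin{equation*}
\sigma_{AB}(x,\eta)=u_\eta^{-1}(x)\sum_{\xi\in\ind}u_\xi(x)\,\sigma_A(x,\xi)\int_\Omega u_\eta(y)\,\sigma_B(y,\eta)\,\overline{v_\xi(y)}\,dy.
\end{equation*}
Now I Taylor-expand $\sigma_B(\cdot,\eta)$ about $y=x$ using Proposition \ref{TaylorExp}, writing
\begin{equation*}
\sigma_B(y,\eta)=\sum_{|\alpha|<N}\frac{1}{\alpha!}D^{(\alpha)}\sigma_B(x,\eta)\,q^\alpha(x,y)+r_N(x,y,\eta),
\end{equation*}
with $r_N(x,y,\eta)=\sum_{|\alpha|=N}\frac{1}{\alpha!}q^\alpha(x,y)\,b_{N,\alpha}(x,y,\eta)$, the $b_{N,\alpha}$ being smooth in $y$ and inheriting symbolic bounds from those of $\sigma_B$. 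Plugging the finite sum into the integral, pulling $D^{(\alpha)}\sigma_B(x,\eta)$ out, and exchanging sums, the factor
\begin{equation*}
u_\eta^{-1}(x)\sum_{\xi\in\ind}u_\xi(x)\,\sigma_A(x,\xi)\int_\Omega q^\alpha(x,y)u_\eta(y)\overline{v_\xi(y)}\,dy
\end{equation*}
is precisely $\Delta_{(x)}^{\alpha}\sigma_A(x,\eta)$ by identity \eqref{EQ:diff-symb-2}. This produces the claimed main term $\sum_{|\alpha|<N}\frac{1}{\alpha!}\Delta_{(x)}^{\alpha}\sigma_A(x,\eta)\,D^{(\alpha)}\sigma_B(x,\eta)$, up to a remainder $R_N(x,\eta)$ coming from $r_N$.

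The main obstacle is the remainder estimate $|R_N(x,\eta)|\leq C_N\langle\eta\rangle^{m_1+m_2-(\rho-\delta)N}$. My strategy is to view $y\mapsto b_{N,\alpha}(x,y,\eta)$ as an auxiliary $\eta$-dependent symbol of order $m_2+\delta N$ and to repeat the identification above: the extra factors $q^\alpha(x,y)$ with $|\alpha|=N$ entering $R_N$ can be absorbed into difference operators of total order $N$ acting on $\sigma_A$, using \eqref{EQ:diff-symb-2} in reverse. This produces a gain of $\langle\eta\rangle^{-\rho N}$ from the hypothesis $|\Delta_{(x)}^{\alpha}\sigma_A(x,\xi)|\leq C_\alpha\langle\xi\rangle^{m_1-\rho|\alpha|}$, which combined with the growth $\langle\eta\rangle^{m_2+\delta N}$ coming from $b_{N,\alpha}$ yields the required order $m_1+m_2-(\rho-\delta)N$. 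Absolute convergence of the intermediate $\xi$-sums is secured by Assumption \ref{Assumption_4} after choosing $N$ sufficiently large; since the exponent $-(\rho-\delta)N$ tends to $-\infty$ as $N\to\infty$ (thanks to the standing hypothesis $\rho>\delta$), letting $N$ be arbitrary completes \eqref{CompositionForm}. The delicate bookkeeping of the remainder, and justifying the swap of sums and integrals in the presence of the distributional kernels, is what I expect to be the main technical burden.
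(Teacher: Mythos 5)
Note that the paper itself does not prove Theorem \ref{Composition}: it is quoted from the reference \cite{RT2016}, so there is no in-paper proof to compare against. Judged on its own terms, your setup is sound and matches the standard line of attack: starting from $\sigma_{AB}(x,\eta)=u_\eta^{-1}(x)(ABu_\eta)(x)$, using $Bu_\eta(y)=u_\eta(y)\sigma_B(y,\eta)$, applying the $L$-quantization of $A$, Taylor-expanding $\sigma_B(\cdot,\eta)$ about $y=x$ via Proposition \ref{TaylorExp}, and recognising the coefficient of $D^{(\alpha)}\sigma_B(x,\eta)$ as $\Delta^\alpha_{(x)}\sigma_A(x,\eta)$ through identity \eqref{EQ:diff-symb-2} all check out (after a harmless relabeling $\xi\leftrightarrow\eta$ in \eqref{EQ:diff-symb-2}). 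This correctly produces the partial sums of \eqref{CompositionForm}.

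The gap is exactly where you flag it, but your proposed fix does not close it. The Taylor remainder is $r_N(x,y,\eta)=\sum_{|\alpha|=N}\frac{1}{\alpha!}q^\alpha(x,y)\,b_{N,\alpha}(x,y,\eta)$, and $b_{N,\alpha}$ genuinely depends on $y$. Consequently the expression
$u_\eta^{-1}(x)\sum_\xi u_\xi(x)\sigma_A(x,\xi)\int_\Omega q^\alpha(x,y)\,b_{N,\alpha}(x,y,\eta)\,u_\eta(y)\overline{v_\xi(y)}\,dy$
is \emph{not} of the form appearing in \eqref{EQ:diff-symb-2}: the $b_{N,\alpha}(x,y,\eta)$ cannot be pulled out of the $y$-integral, so there is no way to ``absorb the $q^\alpha$ into $\Delta^\alpha_{(x)}\sigma_A$'' and then multiply by a symbol of order $m_2+\delta N$. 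What one actually has is $u_\eta^{-1}(x)\bigl(A[q^\alpha(x,\cdot)\,b_{N,\alpha}(x,\cdot,\eta)\,u_\eta(\cdot)]\bigr)(x)$, and estimating this requires a different argument --- e.g. proving decay of the Schwartz kernel of $A$ off the diagonal and using that $q^\alpha(x,y)$ vanishes to order $N$ at $y=x$, or developing an amplitude calculus and showing the remainder amplitude has order $m_1+m_2-(\rho-\delta)N$. A second, related omission: Proposition \ref{TaylorExp} is stated only in a neighborhood of $e=x$, whereas your integral runs over all of $\Omega$; the contribution away from the diagonal (where the Taylor formula is not controlled) has to be handled separately, typically again via off-diagonal kernel smoothness. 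Neither of these is addressed by the ``repeat the identification in reverse'' step, so the remainder bound $|R_N(x,\eta)|\le C_N\langle\eta\rangle^{m_1+m_2-(\rho-\delta)N}$ is asserted rather than proved.
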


\subsection{Construction of parametrices} Now, we will present a technical result about the existence of parametrices for $L$-elliptic operators  in the global pseudo-differential calculus from \cite{RT2016}. We denote $ {S}^{-\infty}(M\times \mathcal{I})=\cap_{m\in \mathbb{R}}{S}^{m}_{\rho,\delta}(M\times \mathcal{I})=\cap_{m\in \mathbb{R}}{S}^{m}_{1,0}(M\times \mathcal{I}).$
\begin{proposition}\label{IesTParametrix} Let $m\in \mathbb{R},$ and let $0\leqslant \delta<\rho\leqslant 1.$  Let  $a=a(x,\xi)\in {S}^{m}_{\rho,\delta}(M\times \mathcal{I}).$  Assume also that $a(x,\xi)$ is invertible for every $(x,\xi)\in M\times\mathcal{I}
,$ and satisfies
\begin{equation}\label{Iesparametrix}
   \sup_{(x,\xi)\in M\times\mathcal{I}} \vert \langle \xi\rangle^ma(x,\xi)^{-1}\vert<\infty.
\end{equation}Then, there exists $B\in {S}^{-m}_{\rho,\delta}(M\times \mathcal{I}),$ such that $AB-I,BA-I\in {S}^{-\infty}(M\times \mathcal{I}). $ Moreover, the symbol of $B$ satisfies the following asymptotic expansion
\begin{equation}\label{AE}
    \widehat{B}(x,\xi)\sim \sum_{N=0}^\infty\widehat{B}_{N}(x,\xi),\,\,\,(x,\xi)\in M\times \mathcal{I},
\end{equation}where $\widehat{B}_{N}(x,\xi)\in {S}^{-m-(\rho-\delta)N}_{\rho,\delta}(M\times \mathcal{I})$ obeys to the inductive  formula
\begin{equation}\label{conditionelip}
    \widehat{B}_{N}(x,\xi)=-a(x,\xi)^{-1}\left(\sum_{k=0}^{N-1}\sum_{|\gamma|=N-k}(\Delta_{(x)}^{\gamma} a(x,\xi))(D_{x}^{(\gamma)}\widehat{B}_{k}(x,\xi))\right),\,\,N\geqslant 1,
\end{equation}with $ \widehat{B}_{0}(x,\xi)=a(x,\xi)^{-1}.$
\end{proposition}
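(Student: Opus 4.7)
The plan is to construct $B$ by the standard Neumann-series-type iteration, building $\widehat{B}_N$ inductively so that partial sums $\sigma_A \#(\sum_{k<N}\widehat{B}_k)$ agree with the identity modulo a symbol of order $-(\rho-\delta)N$, and then passing to a single symbol via the asymptotic summation theorem recalled in the excerpt.

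The first (and I expect the main) obstacle is to check that the base term $\widehat{B}_0(x,\xi):=a(x,\xi)^{-1}$ belongs to $S^{-m}_{\rho,\delta}(M\times\mathcal{I})$. The estimate \eqref{Iesparametrix} gives the order-zero seminorm for $\langle\xi\rangle^m\widehat{B}_0$. For the higher seminorms I would proceed by induction on $|\alpha|+|\beta|$ using a Leibniz-type identity for $D_x^{(\beta)}$ and $\Delta_{(x)}^{\alpha}$ applied to the identity $a(x,\xi)\widehat{B}_0(x,\xi)=1$. The crucial point is that $D_x^{(\beta)}$, being a differential operator of order $|\beta|$ (Proposition \ref{TaylorExp}), satisfies an ordinary Leibniz formula, while for the difference operators one expresses $\Delta_{(x)}^{\alpha}(a \widehat B_0)$ via the representation \eqref{EQ:diff-symb-2} and uses the Taylor expansion \eqref{TaylorExpFormula} with respect to the strongly admissible family $q$, obtaining a finite sum of products $(\Delta_{(x)}^{\alpha'}a)(\Delta_{(x)}^{\alpha''}\widehat B_0)$ modulo lower-order errors. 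Solving for the top-order $\Delta_{(x)}^{\alpha}D_x^{(\beta)}\widehat B_0$ and multiplying by $a^{-1}$ on both sides (using \eqref{Iesparametrix}) then yields the required bound $|\Delta_{(x)}^{\alpha}D_x^{(\beta)}\widehat B_0|\lesssim \langle\xi\rangle^{-m-\rho|\alpha|+\delta|\beta|}$. The condition $\rho>\delta$ ensures the induction closes with losses in the correct direction.

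Second, I would set up the recursion \eqref{conditionelip}. Suppose $\widehat B_0,\ldots,\widehat B_{N-1}$ have been constructed with $\widehat B_k\in S^{-m-(\rho-\delta)k}_{\rho,\delta}$. Applying the composition formula of Theorem \ref{Composition} to $A\circ \mathrm{Op}_L(\widehat B_0+\cdots+\widehat B_{N-1})$ and collecting the terms of order $-(\rho-\delta)N$ shows that the sum contributes exactly $a(x,\xi)\widehat B_N(x,\xi)+\sum_{k<N}\sum_{|\gamma|=N-k}\tfrac{1}{\gamma!}(\Delta_{(x)}^{\gamma}a)(D_x^{(\gamma)}\widehat B_k)$ modulo $S^{-(\rho-\delta)(N+1)}_{\rho,\delta}$, where a combinatorial regrouping accounts for lower-order cross terms being absorbed. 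Choosing $\widehat B_N$ as in \eqref{conditionelip} (which makes sense because $a^{-1}\in S^{-m}_{\rho,\delta}$ by Step 1 and the class is stable under multiplication by symbols in the calculus) kills this order, and the order count $-m+(-m-(\rho-\delta)k)-\rho(N-k)=-m-(\rho-\delta)N$ shows $\widehat B_N\in S^{-m-(\rho-\delta)N}_{\rho,\delta}$.

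Third, I would invoke the asymptotic summation theorem stated just before the Adjoint theorem to produce $\widehat B\in S^{-m}_{\rho,\delta}(M\times \mathcal{I})$ with $\widehat B\sim \sum_{N=0}^{\infty}\widehat B_N$. Denoting $B=\mathrm{Op}_L(\widehat B)$, the construction ensures that for every $N$ the symbol of $AB-I$ lies in $S^{-(\rho-\delta)N}_{\rho,\delta}$; since this holds for every $N$, we get $AB-I\in S^{-\infty}(M\times\mathcal{I})$. To obtain $BA-I\in S^{-\infty}$ one repeats the argument with the roles of left and right composition swapped, constructing a left parametrix $B'$ with $B'A-I\in S^{-\infty}$, and then the standard identity $B'-B=B'(I-AB)-(I-B'A)B\in S^{-\infty}$ shows $B'=B$ modulo $S^{-\infty}$, so $B$ serves as a two-sided parametrix. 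The inductive formula \eqref{conditionelip} is exactly what was used at each step, completing the proof.
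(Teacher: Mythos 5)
The paper does not prove Proposition \ref{IesTParametrix}; it is imported verbatim from \cite{RT2016} (``we will present a technical result \ldots from \cite{RT2016}''), so there is no in-paper proof to compare against. Judged on its own merits, your proposal is the standard parametrix construction and is correct in outline: establish $a^{-1}\in S^{-m}_{\rho,\delta}$, run the Neumann-type recursion using the composition formula of Theorem \ref{Composition}, sum asymptotically, and close with the two-sided parametrix trick $B' - B = B'(I-AB)-(I-B'A)B$. This is surely also the route taken in \cite{RT2016}.

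Two remarks. First, the one place where your argument leans on a fact not actually recorded in the surrounding text is a Leibniz-type expansion for the difference operators $\Delta_{(x)}^{\alpha}$ applied to a product, which you need in Step~1 when you hit $a\cdot a^{-1}=1$ with $\Delta_{(x)}^{\alpha}D_x^{(\beta)}$ and try to isolate the top term $a\,\Delta_{(x)}^{\alpha}D_x^{(\beta)}(a^{-1})$. In the compact Lie group setting such a discrete Leibniz rule is a theorem about strongly admissible difference operators; in the present non-harmonic setting it must be extracted from the representation \eqref{EQ:diff-symb-2} together with a Taylor expansion of $q^{\alpha}$, and you gesture at this but do not carry it out. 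Since this is a genuine technical prerequisite (not a triviality), a complete proof would need that lemma stated and proved, or at least cited from \cite{RT2016}; as it stands, this is the one soft spot, though the intended argument is clear and correct. Second, your recursion carries the factor $\tfrac{1}{\gamma!}$, which is what the composition formula \eqref{CompositionForm} actually produces; the paper's displayed formula \eqref{conditionelip} omits this factor, and your version appears to be the consistent one.
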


\section{Parameter $L$-ellipticity} \label{Parameterellipticity}
We start our contributions to the pseudo-differential calculus in the context of non-harmonic analysis developed by the last two authors in \cite{RT2016},
by developing the  functional calculus for H\"ormander classes $\textnormal{Op}(S^{m}_{\rho,\delta}(M\times \mathcal{I}))$. For this,  we need a more wide notion of ellipticity, which we introduce as follows.  
\begin{definition} Let $m>0,$ and let $0\leqslant \delta<\rho\leqslant 1.$
Let $\Lambda=\{\gamma(t):t\in I\footnote{where $I=[a,b],$ $-\infty<a\leqslant b<\infty,$ $I=[a,\infty),$ $I=(-\infty,b]$ or $I=(-\infty,\infty).$}\}$  be an analytic curve in the complex plane $\mathbb{C}.$ If $I$ is a finite interval we assume that $\Lambda$ is a closed curve. For  simplicity, if $I$ is an infinite interval we assume that $\Lambda$ is homotopy equivalent to the line $\Lambda_{i\mathbb{R}}:=\{iy:-\infty<y<\infty\}.$  Let  $a=a(x,\xi)\in {S}^{m}_{\rho,\delta}(M\times \mathcal{I}).$  Assume also that $R_{\lambda}(x,\xi)^{-1}:=a(x,\xi)-\lambda \neq 0$ for every $(x,\xi)\in M\times\mathcal{I},$ and $\lambda \in  \Lambda.$ We say that $a$ is parameter $L$-elliptic with respect to $\Lambda,$ if
\begin{equation*}
    \sup_{\lambda\in \Lambda}\sup_{(x,\xi)\in M\times\mathcal{I}}\vert (|\lambda|^{\frac{1}{m}}+\langle \xi\rangle)^{m}R_{\lambda}(x,\xi)\vert<\infty.
\end{equation*}
\end{definition}

The following theorem classifies the  resolvent $R_{\lambda}(x,\xi)$ of a parameter $L$-elliptic symbol $a.$
\begin{theorem}\label{lambdalambdita} Let $m>0,$ and let $0\leqslant \delta<\rho\leqslant 1.$ If $a$ is parameter $L$-elliptic with respect to $\Lambda,$ the following estimate 
\begin{equation*}
   \sup_{\lambda\in \Lambda}\sup_{(x,\xi)\in M\times\mathcal{I}}\vert (|\lambda|^{\frac{1}{m}}+\langle \xi\rangle)^{m(k+1)}\langle \xi\rangle^{\rho|\alpha|-\delta|\beta|}\partial_{\lambda}^kD_{x}^{(\beta)}\Delta_{(x)}^{\alpha}R_{\lambda}(x,\xi)\vert<\infty,
\end{equation*}holds true for all $\alpha,\beta\in \mathbb{N}_0^n$ and $k\in \mathbb{N}_0.$

\end{theorem}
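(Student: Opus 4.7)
The plan is to proceed by a triple induction on $(k, |\alpha|, |\beta|)$, with base case $k = |\alpha| = |\beta| = 0$ supplied by the parameter $L$-ellipticity hypothesis itself. The $\lambda$-direction is immediate from the algebraic identity $R_\lambda(x,\xi)(a(x,\xi)-\lambda) = 1$: differentiating in $\lambda$ gives $\partial_\lambda R_\lambda = R_\lambda^2$, whence $\partial_\lambda^k R_\lambda = k!\, R_\lambda^{k+1}$. Since the weight $(|\lambda|^{1/m}+\langle\xi\rangle)^{-m}$ is multiplicative across factors, applying $D_x^{(\beta)}\Delta_{(x)}^\alpha$ to $R_\lambda^{k+1}$ and distributing via Leibniz reduces the general case to bounding $D_x^{(\beta')}\Delta_{(x)}^{\alpha'} R_\lambda$ for $|\alpha'|\leq|\alpha|$ and $|\beta'|\leq|\beta|$, i.e.\ to the case $k = 0$.

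For $k = 0$, I would apply $D_x^{(\beta)}$ and $\Delta_{(x)}^\alpha$ to the identity $R_\lambda(a-\lambda) = 1$ and solve recursively. The $x$-derivatives $D_x^{(\beta)}$ are genuine differential operators of order $|\beta|$ (Proposition \ref{TaylorExp}), so the classical Leibniz rule gives
\[
D_x^{(\beta)} R_\lambda \;=\; -R_\lambda\!\!\sum_{0<\gamma\leq\beta}\!\binom{\beta}{\gamma}(D_x^{(\gamma)}a)\,(D_x^{(\beta-\gamma)}R_\lambda),
\]
and combining $|D_x^{(\gamma)}a|\leq C\langle\xi\rangle^{m+\delta|\gamma|}$ (from $a\in S^m_{\rho,\delta}$) with the inductive bound on $D_x^{(\beta-\gamma)}R_\lambda$ and the elementary inequality $\langle\xi\rangle^m(|\lambda|^{1/m}+\langle\xi\rangle)^{-m}\leq 1$ yields the claim for pure $D_x^{(\beta)}$-derivatives. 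An analogous recursion handles mixed $D_x^{(\beta)}\Delta_{(x)}^\alpha$ after one establishes a Leibniz-type identity for $\Delta_{(x)}^\alpha$ on pointwise products of symbols, of the schematic form
\[
\Delta_{(x)}^\alpha (fg)(x,\xi) \;=\; \sum_{\mu+\nu=\alpha} c_{\mu\nu}^\alpha(\Delta_{(x)}^\mu f)(\Delta_{(x)}^\nu g)(x,\xi) \;+\; R_\alpha(f,g)(x,\xi),
\]
where $R_\alpha$ collects lower-order remainder terms. This identity should be derivable from Definition \ref{DEF: DifferenceOper_2} by factoring $q^\alpha(x,y) = q^\mu(x,y)q^\nu(x,y)$ and transferring one factor of $q$ onto the adjacent symbol via the Taylor-type expansion in Proposition \ref{TaylorExp}, mirroring the compact Lie group argument in \cite{Ruzhansky-Turunen-Wirth:JFAA}. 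Using $|\Delta_{(x)}^\gamma a|\leq C\langle\xi\rangle^{m-\rho|\gamma|}$ then closes the induction in $|\alpha|$.

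The main obstacle is precisely this Leibniz rule for $\Delta_{(x)}^\alpha$ on pointwise products: the difference operators are not local in $\xi$ but instead defined via the $L$-convolution against $q^\alpha(x,\cdot)u_\xi(\cdot)$, so the standard Leibniz argument does not immediately apply and the remainder $R_\alpha$ must be shown to live in a strictly lower-order symbol class so that it is absorbed by the induction hypothesis. Once this step is carried out, the three ingredients --- pure $\lambda$-reduction, the recursion in $D_x^{(\beta)}$, and the recursion in $\Delta_{(x)}^\alpha$ --- combine (using that $\partial_\lambda$ commutes with both $D_x^{(\beta)}$ and $\Delta_{(x)}^\alpha$) to yield the uniform bound on $\partial_\lambda^k D_x^{(\beta)}\Delta_{(x)}^\alpha R_\lambda$ announced in the theorem.
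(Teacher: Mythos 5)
Your proposal is correct in its essential mechanism but follows a genuinely different route from the paper. The paper's proof splits $\Lambda$ into $\Lambda_1=\{|\lambda|\leqslant 1\}$ and $\Lambda_1^c=\{|\lambda|>1\}$: on the compact piece it uses $\partial_\lambda^k R_\lambda\sim R_\lambda^{k+1}$ together with the black-box fact from the global calculus that $R_\lambda\in S^{-m}_{\rho,\delta}$ hence $R_\lambda^{k+1}\in S^{-m(k+1)}_{\rho,\delta}$, uniformly in the compact set $\Lambda_1$; on the unbounded piece it substitutes $\theta=1/|\lambda|$, $\omega=\lambda/|\lambda|$, rewrites $R_\lambda=\theta(\theta a-\omega)^{-1}$, and invokes uniformity over the compact parameter set $\theta\in[0,1]$, $|\omega|=1$, again citing the calculus to control $(\theta a-\omega)^{-1}$. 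You instead fold the $\lambda$-dependence into the weight $(|\lambda|^{1/m}+\langle\xi\rangle)^{-m}$ from the very start, take the exact identity $\partial_\lambda^k R_\lambda=k!\,R_\lambda^{k+1}$, and run a single triple induction closed by the elementary inequality $\langle\xi\rangle^m(|\lambda|^{1/m}+\langle\xi\rangle)^{-m}\leqslant 1$; this makes the uniformity in $\lambda$ transparent and avoids the case split entirely, at the price of carrying out the Leibniz bookkeeping by hand. You correctly flag that a Leibniz-type rule for $\Delta_{(x)}^\alpha$ on pointwise products is the nontrivial ingredient; note that this is also used, though silently, in the paper's proof (both $R_\lambda^{k+1}\in S^{-m(k+1)}_{\rho,\delta}$ and the asymptotic recursion in Proposition \ref{IesTParametrix} require precisely this product rule, which the paper defers to \cite{RT2016,Ruzhansky-Turunen-Wirth:JFAA}). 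One small inaccuracy to fix: the $D_x^{(\beta)}$ are recursively defined differential operators of order $|\beta|$ from Proposition \ref{TaylorExp} rather than standard partials, so a Leibniz rule of the form $D_x^{(\beta)}(fg)=\sum_{\mu,\nu}c^\beta_{\mu\nu}(D_x^{(\mu)}f)(D_x^{(\nu)}g)$ with $|\mu|+|\nu|\leqslant|\beta|$ holds, but the coefficients need not be the binomials $\binom{\beta}{\gamma}$; this does not affect the estimate, since only the order bookkeeping, not the exact constants, enters the bound.
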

\begin{proof}
    We will split the proof in the cases $|\lambda|\leqslant 1,$  and $|\lambda|> 1,$ where $\lambda\in \Lambda.$ It is possible however that one of these two cases could be trivial in the sense that $\Lambda_{1}:=\{\lambda\in \Lambda:|\lambda|\leqslant 1\}$ or $\Lambda_{1}^{c}:=\{\lambda\in \Lambda:|\lambda|> 1\}$ could be an empty set. In such a case the proof is self-contained in the situation that we will consider where we assume that $\Lambda_{1}$ and $\Lambda_{1}^c$ are not trivial sets.   For $|\lambda|\leqslant 1,$ observe that
\begin{align*}
    &\vert (|\lambda|^{\frac{1}{m}}+\langle \xi\rangle)^{m(k+1)}\langle \xi\rangle^{\rho|\alpha|-\delta|\beta|}\partial_{\lambda}^kD_{x}^{(\beta)}\Delta_{(x)}^{\alpha}R_{\lambda}(x,\xi)\vert\\
    &=\vert (|\lambda|^{\frac{1}{m}}+\langle \xi\rangle)^{m(k+1)}\langle \xi\rangle^{-m(k+1)}\langle \xi\rangle^{m(k+1)+\rho|\alpha|-\delta|\beta|}\partial_{\lambda}^kD_{x}^{(\beta)}\Delta_{(x)}^{\alpha}R_{\lambda}(x,\xi)\vert\\
    &\leqslant \vert (|\lambda|^{\frac{1}{m}}+\langle \xi\rangle)^{m(k+1)}\langle \xi\rangle^{-m(k+1)}\vert\\
    &\hspace{5cm}\times\vert\langle \xi\rangle^{m(k+1)+\rho|\alpha|-\delta|\beta|}\partial_{\lambda}^kD_{x}^{(\beta)}\Delta_{(x)}^{\alpha}R_{\lambda}(x,\xi)\vert.
\end{align*} We note that 
\begin{align*}
    &\vert (|\lambda|^{\frac{1}{m}}+\langle \xi\rangle)^{m(k+1)}\langle \xi\rangle^{-m(k+1)}\vert\\
    &= \vert (|\lambda|^{\frac{1}{m}}\langle \xi\rangle^{-1}+1)^{m(k+1)}\vert= \vert |\lambda|^{\frac{1}{m}}\langle \xi\rangle^{-1}+1\vert^{m(k+1)}\\
    &\leqslant \sup_{|\lambda|\in [0,1]}\vert (|\lambda|^{\frac{1}{m}}\langle \xi\rangle^{-1}+1)^{m(k+1)}\vert=O(1).
\end{align*} On the other hand, we can prove that
\begin{align*}
   \vert\langle \xi\rangle^{m(k+1)+\rho|\alpha|-\delta|\beta|}\partial_{\lambda}^kD_{x}^{(\beta)}\Delta_{(x)}^{\alpha}R_{\lambda}(x,\xi)\vert=O(1).
\end{align*} For $k=1,$ $\partial_{\lambda}R_{\lambda}(x,\xi)= R_{\lambda}(x,\xi)^{2}.$ This can be deduced from the Leibniz rule, indeed,
\begin{align*}
 0=\partial_{\lambda}(R_{\lambda}(x,\xi)(a(x,\xi)-\lambda))=(\partial_{\lambda}R_{\lambda}(x,\xi)) (a(x,\xi)-\lambda)+ R_{\lambda}(x,\xi)(-1) 
\end{align*}implies that
    \begin{align*}
 -\partial_{\lambda}(R_{\lambda}(x,\xi))(a(x,\xi)-\lambda)=- R_{\lambda}(x,\xi). 
\end{align*} Because $(a(x,\xi)-\lambda)=R_{\lambda}(x,\xi)^{-1}$ the identity for the first derivative of $R_\lambda,$ $\partial_{\lambda}R_{\lambda}$ it follows. So, from the chain rule we obtain that the term of higher order expanding the derivative   $ \partial_{\lambda}^kR_{\lambda} $ is a multiple of $ R_{\lambda}^{k+1}.$ So, $R_{\lambda}\in S^{-m}_{\rho,\delta}(M\times \mathcal{I}).$ The global pseudo-differential calculus implies that $R_{\lambda}^{k+1}\in S^{-m(k+1)}_{\rho,\delta}(M\times \mathcal{I}).$ This fact, and the compactness of $\Lambda_1\subset \mathbb{C},$ provide us the uniform estimate 
    \begin{equation*}
   \sup_{\lambda\in \Lambda_1}\sup_{(x,\xi)\in M\times\mathcal{I}}\vert\langle \xi\rangle^{m(k+1)+\rho|\alpha|-\delta|\beta|}\partial_{\lambda}^kD_{x}^{(\beta)}\Delta_{(x)}^{\alpha}R_{\lambda}(x,\xi)\vert<\infty.
\end{equation*}Now, we will analyse the situation for $\lambda\in \Lambda_1^c.$ We will use induction over $k$ in order to prove that
\begin{equation*}
   \sup_{\lambda\in \Lambda_1^c}\sup_{(x,\xi)\in M\times\mathcal{I}}\vert (|\lambda|^{\frac{1}{m}}+\langle \xi\rangle)^{m(k+1)}\langle \xi\rangle^{\rho|\alpha|-\delta|\beta|}\partial_{\lambda}^kD_{x}^{(\beta)}\Delta_{(x)}^{\alpha}R_{\lambda}(x,\xi)\vert<\infty.
\end{equation*}
For $k=0$ notice that
\begin{align*}
     &\vert (|\lambda|^{\frac{1}{m}}+\langle \xi\rangle)^{m(k+1)}\langle \xi\rangle^{\rho|\alpha|-\delta|\beta|}\partial_{\lambda}^kD_{x}^{(\beta)}\Delta_{(x)}^{\alpha}R_{\lambda}(x,\xi)\vert\\
     &=\vert (|\lambda|^{\frac{1}{m}}+\langle \xi\rangle)^{m}\langle \xi\rangle^{\rho|\alpha|-\delta|\beta|}D_{x}^{(\beta)}\Delta_{(x)}^{\alpha}(a(x,\xi)-\lambda)^{-1}\vert,
\end{align*}and denoting $\theta=\frac{1}{|\lambda|},$ $\omega=\frac{\lambda}{|\lambda|},$ we have
 \begin{align*}
     &\vert (|\lambda|^{\frac{1}{m}}+\langle \xi\rangle)^{m(k+1)}\langle \xi\rangle^{\rho|\alpha|-\delta|\beta|}\partial_{\lambda}^kD_{x}^{(\beta)}\Delta_{(x)}^{\alpha}R_{\lambda}(x,\xi)\vert\\
     &=\vert (|\lambda|^{\frac{1}{m}}+\langle \xi\rangle^{m})|\lambda|^{-1}\langle \xi\rangle^{\rho|\alpha|-\delta|\beta|}D_{x}^{(\beta)}\Delta_{(x)}^{\alpha}(\theta\times  a(x,\xi)-\omega)^{-1}\vert\\
     &=\vert (1+|\lambda|^{-\frac{1}{m}}\langle \xi\rangle)^{m}\langle \xi\rangle^{\rho|\alpha|-\delta|\beta|}D_{x}^{(\beta)}\Delta_{(x)}^{\alpha}(\theta\times  a(x,\xi)-\omega)^{-1}\vert\\
     &=\vert (1+\theta^{\frac{1}{m}}\langle \xi\rangle)^{m}\langle \xi\rangle^{\rho|\alpha|-\delta|\beta|}D_{x}^{(\beta)}\Delta_{(x)}^{\alpha}(\theta\times  a(x,\xi)-\omega)^{-1}\vert\\
     &=\vert (1+\theta^{\frac{1}{m}}\langle \xi\rangle)^{m}\langle \xi\rangle^{-m}\langle \xi\rangle^{m+\rho|\alpha|-\delta|\beta|}D_{x}^{(\beta)}\Delta_{(x)}^{\alpha}(\theta\times  a(x,\xi)-\omega)^{-1}\vert\\
     &\leqslant \vert (1+\theta^{\frac{1}{m}}\langle \xi\rangle)^{m}\langle \xi\rangle^{-m}\vert\vert\langle \xi\rangle^{m+\rho|\alpha|-\delta|\beta|}D_{x}^{(\beta)}\Delta_{(x)}^{\alpha}(\theta\times  a(x,\xi)-\omega)^{-1}\vert.
\end{align*}   Observe that $ (1+\theta^{\frac{1}{m}}\langle \xi\rangle)^{m}\langle \xi\rangle^{-m} \in S^{0}_{\rho,\delta}(M\times \mathcal{I}) ,$  is uniformly bounded in $\theta\in [0,1].$ Similarly, observe that    $$\sup_{\theta\in [0,1]} \vert\langle \xi\rangle^{m+\rho|\alpha|-\delta|\beta|}D_{x}^{(\beta)}\Delta_{(x)}^{\alpha}(\theta\times  a(x,\xi)-\omega)^{-1}\vert<\infty. $$
   Indeed, $(\theta\times  a(x,\xi)-\omega)^{-1}\in S^{-m}_{\rho,\delta}(M\times \mathcal{I}), $ with $\theta\in [0,1]$ and $\omega$ being an element of the complex circle. The case  $k\geqslant 1$ for $\lambda\in \Lambda_1^c$ can be proved in an analogous way.
\end{proof} Combining Proposition \ref{IesTParametrix} and Theorem \ref{lambdalambdita} we obtain the following corollaries.
\begin{corollary}\label{parameterparametrix}
Let $m>0,$ and let $0\leqslant \delta<\rho\leqslant 1.$ Let  $a$ be a parameter $L$-elliptic symbol with respect to $\Lambda.$ Then  there exists a parameter-dependent parametrix of $A-\lambda I,$ with symbol $a^{-\#}(x,\xi,\lambda)$ satisfying the estimates
\begin{equation*}
   \sup_{\lambda\in \Lambda}\sup_{(x,\xi)\in M\times\mathcal{I}}\vert (|\lambda|^{\frac{1}{m}}+\langle \xi\rangle)^{m(k+1)}\langle \xi\rangle^{\rho|\alpha|-\delta|\beta|}\partial_{\lambda}^kD_{x}^{(\beta)}\Delta_{(x)}^{\alpha}a^{-\#}(x,\xi,\lambda)\vert<\infty,
\end{equation*}for all $\alpha,\beta\in \mathbb{N}_0^n$ and $k\in \mathbb{N}_0.$
\end{corollary}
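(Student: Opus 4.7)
The plan is to combine the two referenced results directly: first invoke Proposition \ref{IesTParametrix} to produce a parametrix for each fixed $\lambda \in \Lambda$, and then use the uniform estimates from Theorem \ref{lambdalambdita} to control every term of the asymptotic expansion uniformly with respect to $\lambda$.

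First I would fix $\lambda \in \Lambda$ and observe that the symbol $a_\lambda(x,\xi) := a(x,\xi) - \lambda$ lies in $S^m_{\rho,\delta}(M\times \mathcal{I})$ (since $\lambda$ is constant in $(x,\xi)$), is invertible, and by parameter $L$-ellipticity satisfies
\begin{equation*}
\sup_{(x,\xi)\in M\times\mathcal{I}} \bigl|\langle \xi\rangle^m a_\lambda(x,\xi)^{-1}\bigr| \leqslant \sup_{(x,\xi)\in M\times\mathcal{I}} \bigl|(|\lambda|^{1/m}+\langle\xi\rangle)^m R_\lambda(x,\xi)\bigr| < \infty.
\end{equation*}
Thus Proposition \ref{IesTParametrix} applies and yields a symbol $a^{-\#}(x,\xi,\lambda) \in S^{-m}_{\rho,\delta}(M\times \mathcal{I})$ with asymptotic expansion
\begin{equation*}
a^{-\#}(x,\xi,\lambda) \sim \sum_{N=0}^{\infty} B_N(x,\xi,\lambda),
\end{equation*}
where $B_0(x,\xi,\lambda) = R_\lambda(x,\xi)$ and the $B_N$ are defined inductively by the formula \eqref{conditionelip} applied to $a_\lambda$, noting that $\Delta_{(x)}^\gamma a_\lambda = \Delta_{(x)}^\gamma a$ so each $B_N$ is a finite sum of products of the form $R_\lambda \cdot (\Delta_{(x)}^{\gamma_1} a) (D_x^{(\beta_1)} R_\lambda) \cdots$.

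Next I would establish, by induction on $N$, the parameter-uniform estimate
\begin{equation*}
\sup_{\lambda\in\Lambda}\sup_{(x,\xi)\in M\times\mathcal{I}} \bigl|(|\lambda|^{1/m}+\langle\xi\rangle)^{m(k+1)+(\rho-\delta)N}\langle\xi\rangle^{\rho|\alpha|-\delta|\beta|}\partial_\lambda^k D_x^{(\beta)} \Delta_{(x)}^\alpha B_N(x,\xi,\lambda)\bigr| < \infty.
\end{equation*}
The base case $N=0$ is exactly Theorem \ref{lambdalambdita}. For the inductive step I would apply the Leibniz rule for $\partial_\lambda^k$, $D_x^{(\beta)}$ and the difference operators $\Delta_{(x)}^\alpha$ to the defining formula of $B_N$, using that the difference calculus satisfies a Leibniz-type rule modulo lower order terms (as in the composition formula of Theorem \ref{Composition}); the factors $\Delta_{(x)}^\gamma a \in S^{m-\rho|\gamma|}_{\rho,\delta}$ contribute only $\langle\xi\rangle$-weights (no $\lambda$-dependence, and their $\lambda$-derivatives vanish), while the factors $R_\lambda$ and $D_x^{(\beta')} R_\lambda$ absorb all the $\lambda$-weights via the inductive hypothesis and Theorem \ref{lambdalambdita}.

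Finally, to pass from the individual estimates on the $B_N$ to the corresponding bounds on $a^{-\#}$, I would mimic the standard Borel-type summation procedure behind asymptotic expansions of symbols: for any fixed multi-indices $\alpha,\beta$ and order $k$, only finitely many terms $B_N$ contribute to the prescribed order in $\langle\xi\rangle$, and the tail $a^{-\#} - \sum_{N<N_0} B_N$ lies in a class whose order in $\langle\xi\rangle$ can be made arbitrarily negative. The uniform $\lambda$-control for the tail is extracted exactly as in the proof of Theorem \ref{lambdalambdita}: split $\Lambda$ into $\Lambda_1 = \{|\lambda| \leqslant 1\}$, where compactness in $\lambda$ and the $S^{m_0}_{\rho,\delta}$-estimates from Proposition \ref{IesTParametrix} suffice, and $\Lambda_1^c$, where the rescaling $\theta = 1/|\lambda|$, $\omega = \lambda/|\lambda|$ reduces matters to a $\theta$-uniform symbol estimate. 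The main obstacle I anticipate is bookkeeping the distribution of the weight $(|\lambda|^{1/m}+\langle\xi\rangle)^{m(k+1)}$ across the many factors appearing after applying Leibniz to products of $R_\lambda$'s; this is routine but requires care to verify that each $\lambda$-derivative costs exactly one extra power of $R_\lambda$ and that the $\Delta_{(x)}^\gamma a$ factors neither consume nor produce any $\lambda$-weight.
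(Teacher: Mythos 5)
The paper itself offers no written proof---it states only that Corollary~\ref{parameterparametrix} follows by ``combining Proposition~\ref{IesTParametrix} and Theorem~\ref{lambdalambdita}''---and your two-step plan (parametrix for each fixed $\lambda$ from Proposition~\ref{IesTParametrix}, then $\lambda$-uniform control of each term $B_N$ via Theorem~\ref{lambdalambdita}) is exactly that intended combination, so your route is the paper's.

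One technical point needs correction, however: your intermediate inductive bound on $B_N$ is not quite right. You claim a gain of $(|\lambda|^{1/m}+\langle\xi\rangle)^{(\rho-\delta)N}$ on the $\lambda$-weight, but this can fail. Take $N=1$, $k=\alpha=\beta=0$: from \eqref{conditionelip}, $B_1=-R_\lambda\sum_{|\gamma|=1}(\Delta_{(x)}^\gamma a)(D_x^{(\gamma)}R_\lambda)$, and Theorem~\ref{lambdalambdita} gives $|R_\lambda|\lesssim(|\lambda|^{1/m}+\langle\xi\rangle)^{-m}$, $|D_x^{(\gamma)}R_\lambda|\lesssim(|\lambda|^{1/m}+\langle\xi\rangle)^{-m}\langle\xi\rangle^{\delta}$, while $|\Delta_{(x)}^\gamma a|\lesssim\langle\xi\rangle^{m-\rho}$, so $|B_1|\lesssim(|\lambda|^{1/m}+\langle\xi\rangle)^{-2m}\langle\xi\rangle^{m-(\rho-\delta)}$. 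Your claimed bound then requires
\[
(|\lambda|^{1/m}+\langle\xi\rangle)^{-(m-(\rho-\delta))}\langle\xi\rangle^{m-(\rho-\delta)}\lesssim 1,
\]
which holds when $m\geqslant\rho-\delta$ but fails for $0<m<\rho-\delta$ and $|\lambda|\to\infty$; the corollary's hypothesis is only $m>0$, so that regime is in scope. The correct inductive estimate places the gain on the $\langle\xi\rangle$-weight:
\[
\sup_{\lambda\in\Lambda}\sup_{(x,\xi)\in M\times\mathcal{I}}\bigl|(|\lambda|^{\frac{1}{m}}+\langle\xi\rangle)^{m(k+1)}\langle\xi\rangle^{\rho|\alpha|-\delta|\beta|+(\rho-\delta)N}\,\partial_\lambda^k D_x^{(\beta)}\Delta_{(x)}^\alpha B_N(x,\xi,\lambda)\bigr|<\infty,
\]
which is also the form the Borel summation actually requires. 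The induction closes because every factor $\Delta_{(x)}^\gamma a$ carries no $\lambda$-dependence and contributes only $\langle\xi\rangle^{m-\rho|\gamma|}$, each extra $R_\lambda$ contributes $(|\lambda|^{1/m}+\langle\xi\rangle)^{-m}$, and the residual $\langle\xi\rangle^{m}$ produced at each step is absorbed using $\langle\xi\rangle^{m}\leqslant(|\lambda|^{1/m}+\langle\xi\rangle)^{m}$. With this correction the remainder of your argument goes through as written.
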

\begin{corollary}\label{resolv}
Let $m>0,$ and let $a\in S^m_{\rho,\delta}(M\times \mathcal{I}) $ where  $0\leqslant \delta<\rho\leqslant 1.$ Let us assume that $\Lambda$ is a subset of the $L^2$-resolvent set of $A,$ $\textnormal{Resolv}(A):=\mathbb{C}\setminus \textnormal{Spec}(A).$ Then $A-\lambda I$ is invertible on $\mathcal{D}'_L(M)$ and the symbol of the resolvent operator $\mathcal{R}_{\lambda}:=(A-\lambda I)^{-1},$ $\widehat{\mathcal{R}}_{\lambda}(x,\xi)$ belongs to $S^{-m}_{\rho,\delta}(M\times \mathcal{I}).$ 
\end{corollary}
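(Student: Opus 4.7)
The strategy is to combine the parameter-dependent parametrix supplied by Corollary \ref{parameterparametrix} with the existence of a genuine $L^2$-inverse that follows from $\lambda$ lying in the resolvent set. First I would record the easy invertibility claim: since $\lambda \in \textnormal{Resolv}(A)$, the operator $\mathcal{R}_\lambda := (A-\lambda I)^{-1}$ exists as a bounded operator on $L^{2}(\Omega)$, and by the duality between $C^\infty_{L}(\overline{\Omega})$ and $\mathcal D'_{L}(\Omega)$ it extends uniquely to a continuous operator on $\mathcal D'_{L}(\Omega)$, giving the first assertion.

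Second, the parameter $L$-ellipticity of $a$ with respect to $\Lambda$ (which is encoded in the resolvent hypothesis together with Theorem \ref{lambdalambdita}, since $R_\lambda(x,\xi) = (a(x,\xi)-\lambda)^{-1}$ is pointwise well defined and satisfies the required uniform symbolic bounds) allows us to invoke Corollary \ref{parameterparametrix}. This produces a parametrix $B_\lambda \in \textnormal{Op}(S^{-m}_{\rho,\delta}(M\times\mathcal{I}))$ with symbol $a^{-\#}(x,\xi,\lambda)$ such that
\begin{equation*}
(A-\lambda I)B_\lambda = I + K_\lambda, \qquad B_\lambda(A-\lambda I) = I + \widetilde{K}_\lambda,
\end{equation*}
with $K_\lambda,\widetilde{K}_\lambda \in \textnormal{Op}(S^{-\infty}(M\times\mathcal{I}))$ smoothing.

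Third, composing the first identity on the left by $\mathcal{R}_\lambda$ gives
\begin{equation*}
B_\lambda = \mathcal{R}_\lambda + \mathcal{R}_\lambda K_\lambda,\qquad\text{so}\qquad \mathcal{R}_\lambda = B_\lambda - \mathcal{R}_\lambda K_\lambda.
\end{equation*}
Since $K_\lambda$ is smoothing, it maps $\mathcal D'_{L}(\Omega)$ into $C^\infty_{L}(\overline{\Omega})$, and because $\mathcal{R}_\lambda$ preserves $C^\infty_{L}(\overline{\Omega})$ (see below), the composition $\mathcal{R}_\lambda K_\lambda$ is smoothing; hence its $L$-symbol lies in $S^{-\infty}(M\times\mathcal{I})$. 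Consequently $\widehat{\mathcal{R}}_\lambda$ differs from $a^{-\#}(\cdot,\cdot,\lambda) \in S^{-m}_{\rho,\delta}$ by a symbol in $S^{-\infty}$, which yields $\widehat{\mathcal{R}}_\lambda \in S^{-m}_{\rho,\delta}(M\times\mathcal{I})$.

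The main obstacle is to justify that $\mathcal{R}_\lambda$ indeed maps $C^\infty_{L}(\overline{\Omega})$ into itself (equivalently, that it maps the scale $\mathcal{H}^{s}_{L}(\Omega)$ into $\mathcal{H}^{s+m}_{L}(\Omega)$). This is an elliptic-regularity bootstrap: given $f \in C^\infty_{L}$ and $u \in L^{2}(\Omega)$ with $(A-\lambda I)u = f$, apply $B_\lambda$ to both sides to obtain $u = B_\lambda f - \widetilde{K}_\lambda u$; the right-hand side lies in $\mathcal{H}^{s+m}_{L}$ whenever $u \in \mathcal{H}^{s}_{L}$, and iterating yields $u \in C^\infty_{L}$. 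Once this regularity is established, the composition argument in the previous paragraph goes through on every Sobolev scale and closes the proof.
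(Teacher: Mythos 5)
Your proposal is correct in its overall architecture and follows the route the paper intends: the paper offers no written proof of this corollary, merely asserting that it follows by "combining Proposition~\ref{IesTParametrix} and Theorem~\ref{lambdalambdita}," and your argument---construct the parameter-dependent parametrix $B_\lambda$ from Corollary~\ref{parameterparametrix}, write $\mathcal{R}_\lambda = B_\lambda - \mathcal{R}_\lambda K_\lambda$, and show the error term is smoothing via an elliptic-regularity bootstrap---is exactly the standard way to fill in those details. The bootstrap step justifying that $\mathcal{R}_\lambda$ preserves $C^\infty_L(\overline{\Omega})$ (indeed maps $\mathcal{H}^s_L$ into $\mathcal{H}^{s+m}_L$) is the right point to flag and supply, and your treatment of it is adequate.

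One remark on a hypothesis you glossed over: the parenthetical claim that parameter $L$-ellipticity of $a$ with respect to $\Lambda$ is "encoded in the resolvent hypothesis together with Theorem~\ref{lambdalambdita}" is not justified as written. Theorem~\ref{lambdalambdita} takes parameter $L$-ellipticity as an input, so it cannot be used to derive it, and the assumption $\Lambda\subset\textnormal{Resolv}(A)$ concerns the spectrum of the operator on $L^2$, which does not automatically imply the pointwise invertibility of $a(x,\xi)-\lambda$ together with the required uniform symbolic bounds. The corollary as printed omits parameter $L$-ellipticity from the hypotheses; reading the surrounding text (especially the claim that both Corollary~\ref{parameterparametrix} and this one follow from the same two results) makes it clear that the authors are silently carrying that hypothesis along. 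Your proof is fine once you make the parameter $L$-ellipticity assumption explicit rather than inferring it, and the remainder of the argument---in particular the conclusion that $\widehat{\mathcal{R}}_\lambda - a^{-\#}(\cdot,\cdot,\lambda) \in S^{-\infty}(M\times\mathcal{I})$ so that $\widehat{\mathcal{R}}_\lambda\in S^{-m}_{\rho,\delta}(M\times\mathcal{I})$---matches what the paper asserts.
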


\section{Global functional calculus}\label{SFC}
In this section we develop the global functional calculus for the classes $S^{m}_{\rho,\delta}(M\times \mathcal{I}).$ The global pseudo-differential calculus will be applied to obtain a global G\r{a}rding inequality.  

\subsection{Symbols defined by functions of pseudo-differential operators}\label{S8}
Let $a\in S^m_{\rho,\delta}(M\times \mathcal{I})$ be a parameter $L$-elliptic symbol  of order $m>0$ with respect to the sector $\Lambda\subset\mathbb{C}.$ For $A=\textnormal{Op}(a),$ let us define the operator $F(A)$  by the (Dunford-Riesz) complex functional calculus
\begin{equation}\label{F(A)}
    F(A)=-\frac{1}{2\pi i}\oint\limits_{\partial \Lambda_\varepsilon}F(z)(A-zI)^{-1}dz,
\end{equation}where
\begin{itemize}
    \item[(CI).] $\Lambda_{\varepsilon}:=\Lambda\cup \{z:|z|\leqslant \varepsilon\},$ $\varepsilon>0,$ and $\Gamma=\partial \Lambda_\varepsilon\subset\textnormal{Resolv}(A)$ is a positively oriented curve in the complex plane $\mathbb{C}$.
    \item[(CII).] $F$ is a holomorphic function in $\mathbb{C}\setminus \Lambda_{\varepsilon},$ and continuous on its closure. 
    \item[(CIII).] We will assume  decay of $F$ along $\partial \Lambda_\varepsilon$ in order that the operator \eqref{F(A)} will be densely defined on $C^\infty_L(M)$ in the strong sense of the topology on $L^2(M).$
\end{itemize} Now, we will compute the global symbols for operators defined by this complex functional calculus. So, we will assume the $\textnormal{WZ}$ condition.
\begin{lemma}\label{LemmaFC}
Let $a\in S^m_{\rho,\delta}(M\times \mathcal{I})$ be a parameter $L$-elliptic symbol  of order $m>0$ with respect to the sector $\Lambda\subset\mathbb{C}.$ Let $F(A):C^\infty_L(M)\rightarrow \mathcal{D}'_L(M)$ be the operator defined by the analytical functional calculus as in \eqref{F(A)}. Under the assumptions $\textnormal{(CI)}$, $\textnormal{(CII)}$, and $\textnormal{(CIII)}$, the global symbol of $F(A),$ $\sigma_{F(A)}(x,\xi)$ is given by,
\begin{equation*}
    \sigma_{F(A)}(x,\xi)=-\frac{1}{2\pi i}\oint\limits_{\partial \Lambda_\varepsilon}F(z)\widehat{\mathcal{R}}_z(x,\xi)dz,
\end{equation*}where $\mathcal{R}_z=(A-zI)^{-1}$ denotes the resolvent of $A,$ and $\widehat{\mathcal{R}}_z(x,\xi)\in S^{-m}_{\rho,\delta}(M\times \mathcal{I}) $ its symbol.
\end{lemma}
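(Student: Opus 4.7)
The plan is to extract the symbol $\sigma_{F(A)}(x,\xi)$ by applying $F(A)$ to an eigenfunction $u_{\xi}$ and then commuting the contour integral past this evaluation. Specifically, by Corollary \ref{COR: SymFor}(ii) (valid under the WZ hypothesis that is already in force), we have for any continuous linear operator $B:C^{\infty}_{L}(\overline{\Omega})\to C^{\infty}_{L}(\overline{\Omega})$ the identity
\begin{equation*}
\sigma_{B}(x,\xi) = u_{\xi}(x)^{-1}(Bu_{\xi})(x),\qquad Bu_{\xi}(x)=u_{\xi}(x)\sigma_{B}(x,\xi).
\end{equation*}
In particular, applied to $B=\mathcal{R}_{z}=(A-zI)^{-1}$ (which is a well-defined continuous operator on $C^{\infty}_{L}(\overline{\Omega})$ for $z\in \partial\Lambda_{\varepsilon}\subset \operatorname{Resolv}(A)$ by Corollary \ref{resolv}), we get $\mathcal{R}_{z}u_{\xi}(x)=u_{\xi}(x)\widehat{\mathcal{R}}_{z}(x,\xi)$.

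Next I would apply $F(A)$ to $u_{\xi}$ and push the evaluation inside the Dunford--Riesz integral:
\begin{equation*}
F(A)u_{\xi}(x) = -\frac{1}{2\pi i}\oint_{\partial\Lambda_{\varepsilon}}F(z)\mathcal{R}_{z}u_{\xi}(x)\,dz
= -\frac{u_{\xi}(x)}{2\pi i}\oint_{\partial\Lambda_{\varepsilon}}F(z)\widehat{\mathcal{R}}_{z}(x,\xi)\,dz.
\end{equation*}
Dividing by $u_{\xi}(x)$ (allowed by WZ) and invoking Corollary \ref{COR: SymFor}(ii) once more yields the claimed formula for $\sigma_{F(A)}(x,\xi)$.

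The main obstacle, and the step that requires care, is justifying the commutation of the contour integral with the pointwise evaluation at $x$ and with the multiplication by $u_{\xi}^{-1}(x)$. To handle this I would proceed as follows. By Corollary \ref{resolv} combined with Theorem \ref{lambdalambdita}, the resolvent symbols $\widehat{\mathcal{R}}_{z}(x,\xi)$ lie in $S^{-m}_{\rho,\delta}(M\times\mathcal{I})$ with seminorm bounds that decay like $(|z|^{1/m}+\langle\xi\rangle)^{-m}$ uniformly in $z\in\partial\Lambda_{\varepsilon}$. Together with assumption (CIII), which provides enough decay of $F(z)$ along $\partial\Lambda_{\varepsilon}$, this shows that the scalar-valued integrand $F(z)\widehat{\mathcal{R}}_{z}(x,\xi)$ is absolutely integrable in $z$ uniformly in $(x,\xi)$. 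Consequently the right-hand side of the target identity defines a bona fide symbol, and the operator-valued integral defining $F(A)$ converges in the strong operator topology on a dense subspace of $L^{2}(\Omega)$ containing all $u_{\xi}$; this lets one interchange integration with the bounded pointwise functional $f\mapsto u_{\xi}^{-1}(x)f(x)$ via dominated convergence applied to Riemann sums.

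Once the interchange is justified, the identity $\sigma_{F(A)}(x,\xi)=-\frac{1}{2\pi i}\oint_{\partial\Lambda_{\varepsilon}}F(z)\widehat{\mathcal{R}}_{z}(x,\xi)\,dz$ follows directly; no symbolic calculus beyond the symbol formula from Corollary \ref{COR: SymFor} and the resolvent estimates of Section \ref{Parameterellipticity} is needed. The uniform membership of $\widehat{\mathcal{R}}_{z}$ in $S^{-m}_{\rho,\delta}$ will moreover be useful in subsequent results, where one would differentiate under the integral sign to identify the symbol class to which $\sigma_{F(A)}$ belongs.
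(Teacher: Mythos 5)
Your proposal is correct and follows essentially the same route as the paper: write $\sigma_{F(A)}(x,\xi)=u_{\xi}(x)^{-1}F(A)u_{\xi}(x)$, push $u_{\xi}^{\pm 1}$ through the Dunford--Riesz contour integral, and identify $u_{\xi}(x)^{-1}(A-zI)^{-1}u_{\xi}(x)$ with $\widehat{\mathcal{R}}_{z}(x,\xi)$, invoking Corollary \ref{resolv} for the symbol class. The paper states these equalities without comment; your additional justification of the interchange is sound in spirit, though note that the uniform $(|z|^{1/m}+\langle\xi\rangle)^{-m}$ decay is established in Theorem \ref{lambdalambdita} and Corollary \ref{parameterparametrix} for $(a(x,\xi)-z)^{-1}$ and for the parametrix symbol $a^{-\#}$, not directly for the true resolvent symbol $\widehat{\mathcal{R}}_{z}$; what one actually uses for convergence of the contour integral is the membership $\widehat{\mathcal{R}}_{z}\in S^{-m}_{\rho,\delta}$ together with the decay of $F$ supplied by (CIII).
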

\begin{proof}
 From Corollary \ref{resolv}, we have that  $\widehat{\mathcal{R}}_z(x,\xi)\in S^{-m}_{\rho,\delta}(M\times \mathcal{I}) .$ Now, observe that  \begin{align*}
  \sigma_{F(A)}(x,\xi)=u_\xi(x)^{-1}F(A)u_\xi(x)=-\frac{1}{2\pi i}\oint\limits_{\partial \Lambda_\varepsilon}F(z)u_\xi(x)^{-1}(A-zI)^{-1}u_\xi(x)dz.  \end{align*} We finish the proof by observing that $\widehat{\mathcal{R}}_z(x,\xi)=u_\xi(x)^{-1}(A-zI)^{-1}u_\xi(x),$ for every $z\in \textnormal{Resolv}(A).$
\end{proof}
Assumption (CIII) will be clarified in the following theorem where we show that the global pseudo-differential calculus is stable under the action of the complex functional calculus.
\begin{theorem}\label{DunforRiesz}
Let $m>0,$ and let $0\leqslant \delta<\rho\leqslant 1.$ Let  $a\in S^m_{\rho,\delta}(M\times \mathcal{I})$ be a parameter $L$-elliptic symbol with respect to $\Lambda.$ Let us assume that $F$ satisfies the  estimate $|F(\lambda)|\leqslant C|\lambda|^s$ uniformly in $\lambda,$ for some $s<0.$  Then  the symbol of $F(A),$  $\sigma_{F(A)}\in S^{ms}_{\rho,\delta}(M\times \mathcal{I}) $ admits an asymptotic expansion of the form
\begin{equation}\label{asymcomplex}
    \sigma_{F(A)}(x,\xi)\sim 
     \sum_{N=0}^\infty\sigma_{{B}_{N}}(x,\xi),\,\,\,(x,\xi)\in M\times \mathcal{I},
\end{equation}where $\sigma_{{B}_{N}}(x,\xi)\in {S}^{ms-(\rho-\delta)N}_{\rho,\delta}(M\times \mathcal{I})$ and 
\begin{equation*}
    \sigma_{{B}_{0}}(x,\xi)=-\frac{1}{2\pi i}\oint\limits_{\partial \Lambda_\varepsilon}F(z)(a(x,\xi)-z)^{-1}dz\in {S}^{ms}_{\rho,\delta}(M\times \mathcal{I}).
\end{equation*}Moreover, 
\begin{equation*}
     \sigma_{F(A)}(x,\xi)\equiv -\frac{1}{2\pi i}\oint\limits_{\partial \Lambda_\varepsilon}F(z)a^{-\#}(x,\xi,\lambda)dz \textnormal{  mod  } {S}^{-\infty}(M\times \mathcal{I}),
\end{equation*}where $a^{-\#}(x,\xi,\lambda)$ is the symbol of the parametrix to $A-\lambda I,$   in Corollary \ref{parameterparametrix}.
\end{theorem}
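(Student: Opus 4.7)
The plan is to combine Lemma~\ref{LemmaFC} with the asymptotic expansion of the parameter-dependent parametrix from Proposition~\ref{IesTParametrix} and Corollary~\ref{parameterparametrix}. The resolvent symbol satisfies $\widehat{\mathcal{R}}_z(x,\xi)\equiv a^{-\#}(x,\xi,z)$ modulo an $S^{-\infty}$ tail with parameter bounds of arbitrary improvement in $(|z|^{1/m}+\langle\xi\rangle)^{-1}$, and the parametrix itself expands as $a^{-\#}\sim\sum_{N\geq 0}B_N(x,\xi,z)$, where $B_0(x,\xi,z)=(a(x,\xi)-z)^{-1}$ and the $B_N$ for $N\geq 1$ are produced by the recursion of Proposition~\ref{IesTParametrix} applied to the symbol $a(x,\xi)-z$ of $A-zI$ (using $\Delta_{(x)}^\gamma(a-z)=\Delta_{(x)}^\gamma a$). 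Setting
\[\sigma_{B_N}(x,\xi)\;:=\;-\frac{1}{2\pi i}\oint_{\partial\Lambda_\varepsilon}F(z)\,B_N(x,\xi,z)\,dz,\]
the proposed expansion~\eqref{asymcomplex} is nothing but the termwise integration of this parametrix expansion of $\widehat{\mathcal{R}}_z$.

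For the leading term, parameter $L$-ellipticity (together with the resulting lower bound $|a|\gtrsim\langle\xi\rangle^m$) places $a(x,\xi)$ outside $\Lambda_\varepsilon$, and Cauchy's integral formula applied to the scalar holomorphic function $F$ on $\mathbb{C}\setminus\Lambda_\varepsilon$ yields the closed-form identity $\sigma_{B_0}(x,\xi)=F(a(x,\xi))$. To show $F(a)\in S^{ms}_{\rho,\delta}(M\times\mathcal{I})$, I would use Cauchy estimates on circles centered at $\lambda=a(x,\xi)$ of radius comparable to $|a|\sim\langle\xi\rangle^m$ to obtain $|F^{(k)}(\lambda)|\leq C_k|\lambda|^{s-k}$, and then apply a Fa\`a di Bruno chain rule to $\Delta_{(x)}^\alpha D_x^{(\beta)}[F(a)]$. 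Each resulting term contributes symbolic order $m(s-k)+mk-\rho|\alpha|+\delta|\beta|=ms-\rho|\alpha|+\delta|\beta|$, giving the required bound.

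For general $N\geq 1$, the recursion shows that $B_N(x,\xi,z)$ is a polynomial combination of powers $(a-z)^{-k}$ with $k\geq 2$, multiplied by products of $\Delta_{(x)}^\gamma a$ and $D_x^{(\gamma)} a$ whose combined $\gamma$-weight equals $N$. Since $\oint F(z)(a(x,\xi)-z)^{-k}\,dz$ evaluates by the residue theorem to a constant multiple of $F^{(k-1)}(a(x,\xi))$, each $\sigma_{B_N}$ is a finite sum of expressions $F^{(j)}(a)\cdot\prod(\text{differences/derivatives of }a)$; using $|F^{(j)}(a)|\leq C\langle\xi\rangle^{m(s-j)}$ together with the symbol bounds on $a$, the Fa\`a di Bruno book-keeping shows $\sigma_{B_N}\in S^{ms-(\rho-\delta)N}_{\rho,\delta}(M\times\mathcal{I})$. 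Truncating $a^{-\#}=\sum_{N<N_0}B_N+R_{N_0}$, the remainder satisfies parameter bounds improved by $(|z|^{1/m}+\langle\xi\rangle)^{-(\rho-\delta)N_0}$, so the same residue/scaling analysis yields $-\frac{1}{2\pi i}\oint F(z)\,R_{N_0}\,dz\in S^{ms-(\rho-\delta)N_0}_{\rho,\delta}$, establishing the asymptotic expansion. The mod-$S^{-\infty}$ identity then follows because $\widehat{\mathcal{R}}_z-a^{-\#}$ is $S^{-\infty}$ with parameter bounds of arbitrary improvement order, so the corresponding difference of contour integrals lies in $S^{-\infty}(M\times\mathcal{I})$.

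The main technical obstacle lies in extracting the sharp $\langle\xi\rangle$-order of each $\sigma_{B_N}$: a direct absolute-value bound on the contour integrals is suboptimal when $s<-1$, since after the scaling $|z|=\langle\xi\rangle^m u$ the integrand develops a near-$0$ singularity cut off only at $u=\varepsilon/\langle\xi\rangle^m$, yielding the weaker order $-m-(\rho-\delta)N$. One must therefore exploit the analyticity of $F$ and evaluate the $z$-integrals via residues rather than by absolute values. Managing this residue bookkeeping through the recursive structure of the parametrix expansion, while maintaining the sharp exponent $ms-(\rho-\delta)N$ at each level, is the delicate part of the argument.
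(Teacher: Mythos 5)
Your overall architecture (contour integral of the resolvent symbol, termwise integration of the parametrix expansion, and the mod-$S^{-\infty}$ comparison with $a^{-\#}$) matches the paper's, and your diagnosis that a naive absolute-value bound on the contour integral degrades for $s\leqslant -1$ is exactly right. However, the step you rely on to repair this --- evaluating the $z$-integrals by residues so that everything reduces to expressions of the form $F^{(j)}(a(x,\xi))\cdot\prod(\text{differences/derivatives of }a)$, controlled by ``a Fa\`a di Bruno chain rule'' --- is a genuine gap. The operators $\Delta_{(x)}^{\alpha}$ of this calculus are not derivatives in $\xi$: by \eqref{EQ:diff-symb-2} they act on a symbol $b(x,\cdot)$ through a sum over \emph{all} $\eta\in\mathcal{I}$ weighted by the biorthogonal system, so there is no chain rule expressing $\Delta_{(x)}^{\alpha}\bigl[F(a(x,\cdot))\bigr](\xi)$ in terms of $F^{(k)}(a(x,\xi))$ and $\Delta_{(x)}^{\gamma}a(x,\xi)$. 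Equivalently, if you first apply $\Delta_{(x)}^{\alpha}$ under the integral sign, the integrand $\Delta_{(x)}^{\alpha}(a(x,\xi)-z)^{-1}$ is no longer a rational function of $z$ with a single pole at $z=a(x,\xi)$ (it carries contributions from all the values $a(x,\eta)$), so the residue bookkeeping you describe does not produce the claimed closed forms. If such a chain rule were available, the whole theorem would reduce to the one-line definition $\sigma_{F(A)}:=F\circ a$; the machinery of Section \ref{Parameterellipticity} exists precisely because it is not.

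What the paper does instead: for $-1<s<0$ it estimates the contour integral directly by absolute values, using the parameter-elliptic bounds of Theorem \ref{lambdalambdita} for $D_{x}^{(\beta)}\Delta_{(x)}^{\alpha}(a(x,\xi)-z)^{-1}$; after the substitution $|z|=\langle\xi\rangle^{m}t$ the borderline piece $\int_{\langle\xi\rangle^{-m}}^{1}t^{s}\,dt$ stays bounded exactly because $s>-1$. For $s\leqslant-1$ it factorizes $F=\tilde{G}^{1+[-s]}$ with $|\tilde{G}(\lambda)|\leqslant C|\lambda|^{s/(1+[-s])}$ and $-1<s/(1+[-s])<0$, applies the first case to $\tilde{G}(A)$, and then invokes the composition formula (Theorem \ref{Composition}) $1+[-s]$ times to place $\sigma_{\tilde{G}(A)^{1+[-s]}}$ in $S^{ms}_{\rho,\delta}(M\times\mathcal{I})$. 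This uses only the Leibniz/composition structure that the calculus actually provides, rather than a chain rule it does not. To keep your route you would first have to prove a substitute chain rule for $\Delta_{(x)}^{\alpha}$, i.e.\ prove directly that $F\circ a$ lies in $S^{ms}_{\rho,\delta}(M\times\mathcal{I})$ --- which is essentially the content of the theorem itself.
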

\begin{proof}
    First, we need to prove that the condition $|F(\lambda)|\leqslant C|\lambda|^s$ uniformly in $\lambda,$ for some $s<0,$ is enough in order to guarantee that \begin{equation*}
    \sigma_{{B}_{0}}(x,\xi):=-\frac{1}{2\pi i}\oint\limits_{\partial \Lambda_\varepsilon}F(z)(a(x,\xi)-z)^{-1}dz,
\end{equation*} is a well defined global-symbol.
From Theorem \ref{lambdalambdita} we deduce that $(a(x,\xi)-z)^{-1}$ satisfies the estimate
\begin{equation*}
   \vert (|z|^{\frac{1}{m}}+\langle \xi\rangle)^{m(k+1)}\langle \xi\rangle^{\rho|\alpha|-\delta|\beta|}\partial_{z}^kD_{x}^{(\beta)}\Delta_{(x)}^{\alpha}(a(x,\xi)-z)^{-1}\vert<\infty.
\end{equation*}
Observe that 
\begin{align*}
    &\vert(a(x,\xi)-z)^{-1}\vert\\
    & =\vert (|z|^{\frac{1}{m}}+\langle \xi\rangle)^{-m}(|z|^{\frac{1}{m}}+\langle \xi\rangle)^{m}(a(x,\xi)-z)^{-1}\vert \\
    &\lesssim  (|z|^{\frac{1}{m}}+\langle \xi\rangle)^{-m}\\&\leqslant |z|^{-1},
\end{align*} and the condition $s<0$ implies
\begin{align*}
    \left|\frac{1}{2\pi i}\oint\limits_{\partial \Lambda_\varepsilon}F(z)(a(x,\xi)-z)^{-1}dz\right|\lesssim \oint\limits_{\partial \Lambda_\varepsilon}|z|^{-1+s}|dz|<\infty,
\end{align*}uniformly in $(x,\xi)\in M\times \mathcal{I}.$ In order to check that $\sigma_{B_0}\in {S}^{ms}_{\rho,\delta}(M\times \mathcal{I})$ let us analyse the cases $-1<s<0$ and $s\leqslant -1$ separately: 

{\textbf{Case 1:}} Let us analyse first the situation of $-1<s<0.$ We observe that
\begin{align*}
   &\vert \langle \xi\rangle^{-ms+\rho|\alpha|-\delta|\beta|}D_{x}^{(\beta)}\Delta_{(x)}^{\alpha}\sigma_{B_0}(x,\xi)\vert\\
   &\leqslant \frac{C}{2\pi }\oint\limits_{\partial \Lambda_\varepsilon} |z|^{s}\vert      \langle \xi\rangle^{-ms+\rho|\alpha|-\delta|\beta|}D_{x}^{(\beta)}\Delta_{(x)}^{\alpha}(a(x,\xi)-z)^{-1}\vert |dz|.
\end{align*}Now, we will estimate the operator norm inside of the integral. Indeed, the identity
\begin{align*}
    &\vert      \langle \xi\rangle^{-ms+\rho|\alpha|-\delta|\beta|}D_{x}^{(\beta)}\Delta_{(x)}^{\alpha}(a(x,\xi)-z)^{-1}\vert=\\
    &\vert (|z|^{\frac{1}{m}}+\langle \xi\rangle)^{-m}(|z|^{\frac{1}{m}}+\langle \xi\rangle)^{m}\langle \xi\rangle^{-ms+\rho|\alpha|-\delta|\beta|}D_{x}^{(\beta)}\Delta_{(x)}^{\alpha}(a(x,\xi)-z)^{-1}\vert
\end{align*}implies that
\begin{align*}
    &\vert      \langle \xi\rangle^{-ms+\rho|\alpha|-\delta|\beta|}D_{x}^{(\beta)}\Delta_{(x)}^{\alpha}(a(x,\xi)-z)^{-1}\vert \lesssim  \vert (|z|^{\frac{1}{m}}+\langle \xi\rangle)^{-m}\langle \xi\rangle^{-ms}\vert
\end{align*}where we have used that
\begin{align*}
\sup_{z\in \partial \Lambda_\varepsilon}  \sup_{(x,\xi)} \vert (|z|^{\frac{1}{m}}+\langle \xi\rangle)^{m}\langle \xi\rangle^{\rho|\alpha|-\delta|\beta|}D_{x}^{(\beta)}\Delta_{(x)}^{\alpha}(a(x,\xi)-z)^{-1}\vert <\infty.
\end{align*}
Consequently, by using that  $s<0,$ we deduce
\begin{align*}
   & \frac{C}{2\pi }\oint\limits_{\partial \Lambda_\varepsilon} |z|^{s}\vert      \langle \xi\rangle^{ms+\rho|\alpha|-\delta|\beta|}D_{x}^{(\beta)}\Delta_{(x)}^{\alpha}(a(x,\xi)-z)^{-1}\vert |dz|\\
    &\lesssim \frac{C}{2\pi }\oint\limits_{\partial \Lambda_\varepsilon} |z|^{s}\vert (|z|^{\frac{1}{m}}+\langle \xi\rangle)^{-m} \langle \xi\rangle^{-ms}\vert |dz|
    .
\end{align*}To study the convergence of the last contour integral we only need to check the convergence of $\int_{1}^{\infty}r^s(r^{\frac{1}{m}}+\varkappa)^{-m}\varkappa^{-ms}dr,$ where $\varkappa>1$ is a parameter. The change of variable $r=\varkappa^{m}t$ implies that
\begin{align*}
   \int\limits_{1}^{\infty}r^s(r^{\frac{1}{m}}+\varkappa)^{-m}\varkappa^{-ms}dr&=\int\limits_{\varkappa^{-m}}^{\infty}\varkappa^{ms}t^s(\varkappa t^{\frac{1}{m}}+\varkappa)^{-m}\varkappa^{-ms}\varkappa^mdt=\int\limits_{\varkappa^{-m}}^{\infty}t^s(t^{\frac{1}{m}}+1)^{-m}dt\\
   &\lesssim \int\limits_{\varkappa^{-m}}^{1}t^sdt+\int\limits_{1}^{\infty}t^{-1+s}<\infty.
\end{align*}Indeed, for $t\rightarrow\infty,$ $t^s(t^{\frac{1}{m}}+1)^{-m}\lesssim t^{-1+s}$ and we conclude the estimate because $\int\limits_{1}^{\infty} t^{-1+s'}dt<\infty,$ for all $s'<0.$ On the other hand, the condition $-1<s<0$ implies that
\begin{align*}
 \int\limits_{\varkappa^{-m}}^{1}t^sdt=\frac{1}{1+s}-\frac{\varkappa^{-m(1+s)}}{1+s}=   O(1).
\end{align*} 

{\textbf{Case 2.}}
In the case where $s\leqslant -1,$ we can find an analytic function $\tilde{G}(z)$ such that it is a holomorphic function in $\mathbb{C}\setminus \Lambda_{\varepsilon},$ and continuous on its closure and additionally satisfying that $F(\lambda)=\tilde{G}(\lambda)^{1+[-s]}.$\footnote{ $[-s]$ denotes  the integer part of $-s.$} In this case,  $\tilde{G}(A)$ defined by the complex functional calculus 
\begin{equation}\label{G(A)}
    \tilde{G}(A)=-\frac{1}{2\pi i}\oint\limits_{\partial \Lambda_\varepsilon}\tilde{G}(z)(A-zI)^{-1}dz,
\end{equation}
has symbol belonging to ${S}^{\frac{sm}{1+[-s]}}_{\rho,\delta}(M\times \mathcal{I}),$ this in view of {\textbf{Case 1}}, because $\tilde{G}$ satisfies the estimate $|\tilde{G}(\lambda)|\leqslant C|\lambda|^{\frac{s}{1+[-s]}},$ with $-1<\frac{s}{1+[-s]}<0.$ 
By observing that
\begin{align*}
    \sigma_{F(A)}(x,\xi)&=-\frac{1}{2\pi i}\oint\limits_{\partial \Lambda_\varepsilon}F(z)\widehat{\mathcal{R}}_z(x,\xi)dz=-\frac{1}{2\pi i}\oint\limits_{\partial \Lambda_\varepsilon}\tilde{G}(z)^{1+[-s]}\widehat{\mathcal{R}}_z(x,\xi)dz\\
    &=\sigma_{\tilde{G}(A)^{1+[-s]}}(x,\xi),
\end{align*}and computing the symbol $\sigma_{\tilde{G}(A)^{1+[-s]}}(x,\xi)$ by iterating $1+[-s]$-times  the asymptotic formula for the composition in the global pseudo-differential calculus, we can see that the term with higher order in such expansion is $\sigma_{\tilde{G}(A)}(x,\xi)^{1+[-s]}\in {S}^{ms}_{\rho,\delta}(M\times \mathcal{I}).$ Consequently we have proved that $\sigma_{F(A)}(x,\xi)\in {S}^{ms}_{\rho,\delta}(M\times \mathcal{I}).$
This completes the proof for the first part of the theorem.
For the second part of the proof, let us denote by $a^{-\#}(x,\xi,\lambda)$  the symbol of the parametrix to $A-\lambda I,$   in Corollary \ref{parameterparametrix}. Let $P_{\lambda}=\textnormal{Op}(a^{-\#}(\cdot,\cdot,\lambda)).$ Because $\lambda\in \textnormal{Resolv}(A)$ for $\lambda\in \partial \Lambda_\varepsilon,$ $(A-\lambda)^{-1}-P_{\lambda}$ is a smoothing operator. Consequently, from Lemma \ref{LemmaFC} we deduce that
\begin{align*}
   & \sigma_{F(A)}(x,\xi)\\
   &=-\frac{1}{2\pi i}\oint\limits_{\partial \Lambda_\varepsilon}F(z)\widehat{\mathcal{R}}_z(x,\xi)dz\\
    &=-\frac{1}{2\pi i}\oint\limits_{\partial \Lambda_\varepsilon}F(z)a^{-\#}(x,\xi,z)dz-\frac{1}{2\pi i}\oint\limits_{\partial \Lambda_\varepsilon}F(z)(\widehat{\mathcal{R}}_z(x,\xi)-a^{-\#}(x,\xi,z))dz\\
    &\equiv -\frac{1}{2\pi i}\oint\limits_{\partial \Lambda_\varepsilon}F(z)a^{-\#}(x,\xi,z)dz  \textnormal{  mod  } {S}^{-\infty}(M\times \mathcal{I}).
\end{align*}The asymptotic expansion \eqref{asymcomplex} came from the construction of the parametrix in the global pseudo-differential calculus (see Proposition \ref{IesTParametrix}).
\end{proof}

\subsection{G\r{a}rding inequality} 
In this subsection we prove the G\r{a}rding inequality for the global pseudo-differential calculus. To do so, we need some preliminaries. 
\begin{proposition}
Let $0\leqslant \delta<\rho\leqslant 1.$  Let  $a\in S^m_{\rho,\delta}(M\times \mathcal{I})$ be an $L$-elliptic global symbol where $m\geqslant 0$ and let us assume that $a>0.$  Then $a$ is parameter-elliptic with respect to $\mathbb{R}_{-}:=\{z=x+i0:x<0\}\subset\mathbb{C}.$ Furthermore, for any number $s\in \mathbb{C},$ 
\begin{equation*}
    \widehat{B}(x,\xi)\equiv a(x,\xi)^s:=\exp(s\log(a(x,\xi))),\,\,(x,\xi)\in M\times \mathcal{I},
\end{equation*}defines a symbol $\widehat{B}(x,\xi)\in S^{m\times\textnormal{Re}(s)}_{\rho,\delta}(M\times \mathcal{I}).$
\end{proposition}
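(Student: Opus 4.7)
The plan is to verify the two assertions in turn.

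For the parameter-ellipticity: Since $a>0$ is $L$-elliptic of order $m\geq 0$, combined with $\langle\xi\rangle\geq 1$ one extracts a uniform lower bound $a(x,\xi)\geq c\langle\xi\rangle^m$ on $M\times\mathcal{I}$ for some $c>0$. For $\lambda\in\mathbb{R}_{-}$, one has $a(x,\xi)-\lambda=a(x,\xi)+|\lambda|>0$, so $R_\lambda(x,\xi)=(a(x,\xi)-\lambda)^{-1}$ is well defined, and the elementary inequalities
\begin{equation*}
(|\lambda|^{1/m}+\langle\xi\rangle)^m\leq 2^m(|\lambda|+\langle\xi\rangle^m),\qquad a(x,\xi)+|\lambda|\geq\min(c,1)(|\lambda|+\langle\xi\rangle^m)
\end{equation*}
give the uniform bound $(|\lambda|^{1/m}+\langle\xi\rangle)^m|R_\lambda(x,\xi)|\leq 2^m/\min(c,1)$, which is parameter $L$-ellipticity with respect to $\mathbb{R}_{-}$.

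For the symbol estimate on $a^s$, I would first handle the case $\textnormal{Re}(s)<0$ by applying Theorem \ref{DunforRiesz} to the function $F(z):=z^s$, which is holomorphic on $\mathbb{C}\setminus\Lambda_\varepsilon$ with $\Lambda_\varepsilon=\mathbb{R}_-\cup\{|z|\leq\varepsilon\}$ for any $\varepsilon<c$, and which satisfies the decay hypothesis $|F(\lambda)|\leq e^{\pi|\textnormal{Im}(s)|}|\lambda|^{\textnormal{Re}(s)}$. The leading term of the asymptotic expansion \eqref{asymcomplex}, namely
\begin{equation*}
\sigma_{B_0}(x,\xi)=-\frac{1}{2\pi i}\oint_{\partial\Lambda_\varepsilon}z^s(a(x,\xi)-z)^{-1}dz,
\end{equation*}
is, with the appropriate orientation of $\partial\Lambda_\varepsilon$ (so that it winds once around $a(x,\xi)$ in the simply connected domain of holomorphy of $z^s$), exactly the scalar value $a(x,\xi)^s$ by the Cauchy integral formula, while Theorem \ref{DunforRiesz} guarantees $\sigma_{B_0}\in S^{m\textnormal{Re}(s)}_{\rho,\delta}(M\times\mathcal{I})$. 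This settles the case $\textnormal{Re}(s)<0$.

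For general $s\in\mathbb{C}$, I would write $s=\sigma+N$ with $N\in\mathbb{N}_0$ chosen so that $\textnormal{Re}(\sigma)<0$, and split pointwise $a^s=a^\sigma\cdot a^N$. By the previous step $a^\sigma\in S^{m\textnormal{Re}(\sigma)}_{\rho,\delta}$, and $a^N$, being an $N$-fold pointwise product of $a\in S^m_{\rho,\delta}$, belongs to $S^{mN}_{\rho,\delta}$ by the Leibniz rule for $D^{(\beta)}$ together with the corresponding Leibniz-type identity for the difference operators $\Delta_{(x)}^\alpha$ on pointwise symbol products (inherited from the Taylor expansion apparatus of Proposition \ref{TaylorExp}). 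The product lies in $S^{m\textnormal{Re}(s)}_{\rho,\delta}(M\times\mathcal{I})$, as required.

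The main technical point is the pointwise identification $\sigma_{B_0}(x,\xi)=a(x,\xi)^s$: one must justify that the keyhole contour $\partial\Lambda_\varepsilon$, via a deformation inside the domain of holomorphy $\mathbb{C}\setminus\mathbb{R}_-$, winds once around the scalar $a(x,\xi)>c>\varepsilon$ uniformly in $(x,\xi)$, with tails at infinity controlled by the decay $|z|^{\textnormal{Re}(s)}$ of $F$. Once this is in place, both the invocation of Theorem \ref{DunforRiesz} in the case $\textnormal{Re}(s)<0$ and the reduction for general $s$ via the splitting $a^\sigma\cdot a^N$ proceed in a straightforward manner.
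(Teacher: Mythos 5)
Your proof is correct and follows essentially the same route as the paper's: a direct verification of parameter $L$-ellipticity with respect to $\mathbb{R}_-$ from the two-sided bound $c\langle\xi\rangle^m\leq a(x,\xi)\leq C\langle\xi\rangle^m$, followed by an application of Theorem~\ref{DunforRiesz} with $F(z)=z^s$ for $\textnormal{Re}(s)<0$, and the multiplicative reduction $a^s=a^{s-k}\cdot a^k$ to handle $\textnormal{Re}(s)\geq 0$. Your version is somewhat tidier — the parameter-ellipticity bound is obtained in one stroke via the elementary inequalities rather than the paper's $|\lambda|\lessgtr 1/2$ case split, and you make explicit the Cauchy-integral identification $\sigma_{B_0}(x,\xi)=a(x,\xi)^s$ that the paper leaves implicit — but both arguments ultimately rest on the same two ingredients, including the (implicitly invoked in both) closure of the classes $S^m_{\rho,\delta}(M\times\mathcal{I})$ under pointwise products, which is what the paper calls on when it says ``from the global pseudo-differential calculus.''
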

\begin{proof} From the estimates 
\begin{equation*}
     \sup_{(x,\xi)}\vert\langle \xi\rangle^{-m} a(x,\xi) \vert<\infty,\quad \sup_{(x,\xi)}\vert \langle \xi\rangle^{m}a(x,\xi)^{-1} \vert<\infty,
\end{equation*}
   we deduce that
    \begin{equation*}
          \langle \xi\rangle^{-m}|a(x,\xi)|\subset [c,C],
    \end{equation*}where $c,C>0$ are positive real numbers. Now, for every $\lambda\in \mathbb{R}_{-}$ we have
    \begin{align*}
      &   \vert (|\lambda|^{\frac{1}{m}}+\langle \xi\rangle)^m (a(x,\xi)-\lambda)^{-1}\vert\\
         &\asymp \vert (|\lambda|^{\frac{1}{m}}+\langle \xi\rangle)^m (\langle \xi\rangle^m-\lambda)^{-1}(\langle \xi\rangle^m-\lambda)(a(x,\xi)-\lambda)^{-1}\vert \\
         &\lesssim \vert (|\lambda|^{\frac{1}{m}}+\langle \xi\rangle)^m (\langle \xi\rangle^m-\lambda)^{-1}\vert \vert (\langle \xi\rangle^m-\lambda)(a(x,\xi)-\lambda)^{-1}\vert\\
         &\lesssim \vert (|\lambda|^{\frac{1}{m}}+\langle \xi\rangle)^m (\langle \xi\rangle^m-\lambda)^{-1}\vert.
    \end{align*}
By fixing again $\lambda\in \mathbb{R}_{-}$, we observe that from the compactness of $[0,1/2]$ we deduce that
    \begin{align*}
        \sup_{0\leqslant \lambda\leqslant 1/2}\vert (|\lambda|^{\frac{1}{m}}+\langle \xi\rangle)^m (\langle \xi\rangle^m-\lambda)^{-1}\vert&\asymp \sup_{0\leqslant \lambda\leqslant 1/2}\vert \langle \xi\rangle^m (\langle \xi\rangle^m-\lambda)^{-1}\vert \\
        &\asymp \sup_{0\leqslant \lambda\leqslant 1/2}\vert \langle \xi\rangle^m (\langle \xi\rangle^m-\lambda)^{-1}\vert \\
         &\asymp \sup_{0\leqslant \lambda\leqslant 1/2}\vert  (1-\lambda\langle \xi\rangle^{-m})^{-1}\vert \\
          &\lesssim 1.
    \end{align*} On the other hand,
    \begin{align*}
    &  \sup_{\lambda\geqslant 1/2}\vert (|\lambda|^{\frac{1}{m}}+\langle \xi\rangle)^m (\langle \xi\rangle^m-\lambda)^{-1}\vert  \\
      &=\sup_{\lambda\geqslant 1/2}\vert (|\lambda|^{\frac{1}{m}}\langle \xi\rangle^{-1}+1)^m (1-\langle \xi\rangle^{-m}\lambda)^{-1}\vert\\
      &=\sup_{\lambda\geqslant 1/2}\vert (\langle \xi\rangle^{-1}+|\lambda|^{-\frac{1}{m}})^m |\lambda|(1-\langle \xi\rangle^{-m}\lambda)^{-1}\vert\\
      &\lesssim \sup_{\lambda\geqslant 1/2}\vert \langle \xi\rangle^{-m} |\lambda|(-\lambda)^{-1}\langle \xi\rangle^m\vert\\
      &=1.
    \end{align*}So, we have proved that $a$ is parameter-elliptic with respect to $\mathbb{R}_{-}.$ To prove that $\widehat{B}(x,\xi)\in S^{m\times\textnormal{Re}(s)}_{\rho,\delta}(M\times \mathcal{I}),$ we observe that for $\textnormal{Re}(s)<0$, Theorem \ref{DunforRiesz} can be applied. If  $\textnormal{Re}(s)\geqslant 0$ then there exists $k\in \mathbb{N}$ such that $\textnormal{Re}(s)-k<0$. Consequently, from the spectral calculus of matrices we deduce that $a(x,\xi)^{\textnormal{Re}(s)-k}\in S^{m\times(\textnormal{Re}(s)-k)}_{\rho,\delta}(M\times \mathcal{I}).$ So, from the global pseudo-differential calculus we conclude that   $$a(x,\xi)^{s}=a(x,\xi)^{s-k}a(x,\xi)^{k}\in S^{m\times\textnormal{Re}(s)}_{\rho,\delta}(M\times \mathcal{I}).$$ Thus, the proof is complete.
\end{proof}
\begin{corollary}\label{1/2}
Let $0\leqslant \delta,\rho\leqslant 1.$  Let  $a\in S^m_{\rho,\delta}(M\times \mathcal{I}),$  be an $L$-elliptic symbol  where $m\geqslant 0$ and let us assume that $a>0.$  Then 
$\widehat{B}(x,\xi)\equiv a(x,\xi)^\frac{1}{2}:=\exp(\frac{1}{2}\log(a(x,\xi)))\in S^{\frac{m}{2}}_{\rho,\delta}(M\times \mathcal{I}).$
\end{corollary}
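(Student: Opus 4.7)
The plan is to invoke the preceding proposition directly with the specific choice $s = 1/2$. The hypotheses of that proposition---namely that $a \in S^m_{\rho,\delta}(M \times \mathcal{I})$ is an $L$-elliptic symbol of nonnegative order $m$ with $a > 0$---coincide with the hypotheses given here (with the ordering $0 \leq \delta < \rho \leq 1$ understood as inherited from that proposition, since the conclusion relies on the contour-integral machinery of Theorem \ref{DunforRiesz}). That proposition already asserts $a^{s} = \exp(s \log a) \in S^{m\cdot \textnormal{Re}(s)}_{\rho,\delta}(M \times \mathcal{I})$ for \emph{every} $s \in \mathbb{C}$. I simply take $s = 1/2$, so that $\textnormal{Re}(s) = 1/2$ and the order reads $m\cdot(1/2) = m/2$, yielding the claim.

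The only minor point worth recording is that the positivity hypothesis $a > 0$ renders $\log a$ unambiguously real-valued, so $\exp(\tfrac{1}{2}\log a)$ coincides with the usual arithmetic positive square root $\sqrt{a}$ and no branch-cut ambiguity arises. There is no substantive obstacle in this argument, because all of the analytic content---parameter $L$-ellipticity of $a$ with respect to the negative real axis $\mathbb{R}_-$, the contour-integral definition $a^{s} = -\tfrac{1}{2\pi i}\oint_{\partial \Lambda_\varepsilon} z^{s}(a-z)^{-1}dz$ applied to the symbol, and the membership of such integrals in the appropriate symbol class via Theorem \ref{DunforRiesz}---has already been carried out in the proof of the preceding proposition. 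The corollary therefore reduces to a one-line specialisation obtained by substituting $s = 1/2$ into its conclusion.
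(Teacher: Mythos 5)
Your proof is correct and matches the paper's (implicit) argument: the paper provides no separate proof of Corollary~\ref{1/2} precisely because it is the $s=1/2$ specialisation of the preceding proposition, and your observations — that $\mathrm{Re}(1/2)=1/2$ yields order $m/2$, that $a>0$ makes $\log a$ real so there is no branch ambiguity, and that the displayed condition ``$0\leq\delta,\rho\leq 1$'' in the corollary must in fact be read as $0\leq\delta<\rho\leq 1$ inherited from the proposition (confirmed by the way Corollary~\ref{1/2} is invoked inside the proof of Theorem~\ref{GardinTheorem}) — are all accurate. There is no gap.
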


Now, we prove the following lower bound.
\begin{theorem}
[G\r{a}rding inequality]
\label{GardinTheorem} 
For $0\leqslant \delta<\rho\leqslant 1,$ let $a(x,D):C^\infty_L(M)\rightarrow\mathcal{D}'_L(M)$ be an operator with symbol  $a\in {S}^{m}_{\rho,\delta}( M\times \mathcal{I})$, $m\in \mathbb{R}$. Let us assume that 
\begin{equation*}
    A(x,\xi):=\frac{1}{2}(a(x,\xi)+\overline{a(x,\xi)}),\,(x,\xi)\in M\times \mathcal{I},\,\,a\in S^m_{\rho,\delta}(M\times \mathcal{I}), 
\end{equation*}satisfies
\begin{align*}\label{garding}
    \vert\langle \xi\rangle^mA(x,\xi)^{-1} \vert\leqslant C_{0}.
\end{align*}Then, there exist $C_{1},C_{2}>0,$ such that the lower bound
\begin{align}
    \textnormal{Re}(a(x,D)u,u) \geqslant C_1\Vert u\Vert_{\mathcal{H}^{\frac{m}{2}}(M)}-C_2\Vert u\Vert_{L^2(M)}^2,
\end{align}holds true for every $u\in C^\infty_L(M).$
\end{theorem}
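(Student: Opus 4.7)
The plan is to reduce matters to the real part of $a$, construct a square-root operator via the functional calculus developed in Section \ref{SFC}, and then handle the remainder by elliptic parametrix and interpolation.

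First I would write
\begin{equation*}
2\,\mathrm{Re}(a(x,D)u,u) = ((a(x,D)+a(x,D)^{*})u,u),
\end{equation*}
and use the adjoint theorem to see that the symbol of $\tfrac12(a(x,D)+a(x,D)^{*})$ equals $A(x,\xi)$ modulo $S^{m-(\rho-\delta)}_{\rho,\delta}(M\times\mathcal I)$. Under the standing sign convention for G{\aa}rding's inequality, the hypothesis $|\langle\xi\rangle^{m}A(x,\xi)^{-1}|\le C_{0}$ together with real-valuedness of $A$ lets one reduce to the case $A(x,\xi)\ge c\langle\xi\rangle^{m}$ (otherwise one replaces $a$ by $-a$ on the corresponding component). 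In particular, $A\in S^{m}_{\rho,\delta}(M\times\mathcal I)$ is an $L$-elliptic, positive symbol.

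Next I would apply Corollary \ref{1/2} to $A$ to obtain $b(x,\xi):=A(x,\xi)^{1/2}\in S^{m/2}_{\rho,\delta}(M\times\mathcal I)$, and set $B:=\mathrm{Op}_{L}(b)$. Because $b$ is real, the adjoint expansion shows that $B^{*}$ has principal symbol $b$ as well, and the composition formula of Theorem \ref{Composition} then gives
\begin{equation*}
\sigma_{B^{*}B}(x,\xi) = b(x,\xi)^{2} + r_{1}(x,\xi) = A(x,\xi) + r_{1}(x,\xi),
\end{equation*}
where $r_{1}\in S^{m-(\rho-\delta)}_{\rho,\delta}(M\times\mathcal I)$. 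Consequently,
\begin{equation*}
\mathrm{Re}(a(x,D)u,u) = \|Bu\|_{L^{2}}^{2} + (Ru,u),\qquad R\in \mathrm{Op}_{L}(S^{m-(\rho-\delta)}_{\rho,\delta}(M\times\mathcal I)).
\end{equation*}

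For the main term, since $b$ is $L$-elliptic of order $m/2$, Proposition \ref{IesTParametrix} supplies a parametrix $B^{\#}\in\mathrm{Op}_{L}(S^{-m/2}_{\rho,\delta}(M\times\mathcal I))$ with $B^{\#}B=I+S$, $S\in\mathrm{Op}_{L}(S^{-\infty}(M\times\mathcal I))$. Using the boundedness of $B^{\#}$ from $L^{2}$ to $\mathcal H^{m/2}_{L}(M)$ and of $S$ from $L^{2}$ to $\mathcal H^{m/2}_{L}(M)$, one gets
\begin{equation*}
\|u\|_{\mathcal H^{m/2}_{L}(M)} \le \|B^{\#}Bu\|_{\mathcal H^{m/2}_{L}(M)} + \|Su\|_{\mathcal H^{m/2}_{L}(M)} \lesssim \|Bu\|_{L^{2}} + \|u\|_{L^{2}},
\end{equation*}
so that $\|Bu\|_{L^{2}}^{2}\ge c\|u\|_{\mathcal H^{m/2}_{L}(M)}^{2} - c'\|u\|_{L^{2}}^{2}$.

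For the remainder, $R$ has order $m-(\rho-\delta)$, so Cauchy--Schwarz in the $L$-Sobolev scale gives $|(Ru,u)|\lesssim \|u\|_{\mathcal H^{(m-(\rho-\delta))/2}_{L}(M)}^{2}$. Since $\rho>\delta$, the exponent $(m-(\rho-\delta))/2$ is strictly less than $m/2$, and the standard interpolation inequality
\begin{equation*}
\|u\|_{\mathcal H^{s}_{L}(M)}^{2} \le \varepsilon\,\|u\|_{\mathcal H^{m/2}_{L}(M)}^{2} + C_{\varepsilon}\|u\|_{L^{2}(M)}^{2}, \qquad s<m/2,
\end{equation*}
(which follows from Plancherel in the $L$-Fourier side, Proposition \ref{PlanchId}, and Young's inequality applied to $\langle\xi\rangle^{2s}\le\varepsilon\langle\xi\rangle^{m}+C_{\varepsilon}$) absorbs the remainder into $c\|u\|_{\mathcal H^{m/2}_{L}(M)}^{2}$ with $\varepsilon$ small enough. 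Combining yields the claimed inequality with suitable $C_{1},C_{2}>0$.

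The main obstacle I anticipate is the bookkeeping in identifying the principal symbol of $B^{*}B$ with $A$ modulo a gain of $(\rho-\delta)$: this requires that the adjoint of $b\in S^{m/2}_{\rho,\delta}$ have an expansion whose top term is $\overline{b}=b$ (needing $\rho>\delta$ to make the asymptotic expansion meaningful) and that the composition formula of Theorem \ref{Composition} applies to the pair $(B^{*},B)$ under the hypothesis $0\le\delta<\rho\le 1$. The positivity reduction $A>0$ is essentially a convention, but writing it down carefully is also part of the subtlety.
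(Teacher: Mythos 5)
Your argument is correct and follows the same overall blueprint as the paper's proof: reduce to the real symbol $A$, construct a square root of order $m/2$ via Corollary~\ref{1/2}, and absorb the order-$(m-(\rho-\delta))$ remainder with the interpolation inequality on the $L$-Sobolev scale. The one genuine divergence is in what gets square-rooted. You take $b=A^{1/2}$, so $B^{*}B$ reproduces $A$ modulo lower order, and you then need an elliptic parametrix $B^{\#}$ together with its $L^{2}\to\mathcal H^{m/2}(M)$ boundedness to convert $\|Bu\|_{L^{2}}$ into $\|u\|_{\mathcal H^{m/2}(M)}$ up to an $L^{2}$ error. The paper instead peels off the Sobolev weight before square-rooting: it fixes $C_{1}\in(0,1/C_{0})$ and sets $q(x,\xi)=(A(x,\xi)-C_{1}\langle\xi\rangle^{m})^{1/2}$, so that $(A(x,D)u,u)=C_{1}(\mathcal M_{m}u,u)+\|q(x,D)^{*}u\|_{L^{2}}^{2}-(r(x,D)u,u)$ and the term $(\mathcal M_{m}u,u)=\|u\|^{2}_{\mathcal H^{m/2}(M)}$ appears directly, with no parametrix required. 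Both routes work; the paper's decomposition is leaner because it sidesteps the extra elliptic-regularity step, while yours is the more literal transcription of the classical Euclidean argument. Two small points to tighten in a write-up: the hypothesis only yields $|A(x,\xi)|\geqslant C_{0}^{-1}\langle\xi\rangle^{m}$, so the positivity $A>0$ needed to invoke Corollary~\ref{1/2} is really a convention (as the paper treats it implicitly) rather than something one deduces by ``flipping $a$ on a component''; and identifying the symbol of $\tfrac12(a(x,D)+a(x,D)^{*})$ with $A$ modulo a $(\rho-\delta)$-gain involves passing between the $L$- and $L^{*}$-quantizations via the adjoint theorem, which deserves an explicit sentence.
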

\begin{proof}
In view of that
\begin{equation*}
    A(x,\xi):=\frac{1}{2}(a(x,\xi)+\overline{a(x,\xi)}),\,(x,\xi)\in M\times \mathcal{I},\,\,a\in S^m_{\rho,\delta}(M\times \mathcal{I}), 
\end{equation*} satisfies
\begin{align}\label{eqi}
    \vert\langle \xi\rangle^mA(x,\xi)^{-1} \vert\leqslant C_{0},
\end{align} 
we get
\begin{align*}
  \langle \xi\rangle^{-m}A(x,\xi)\geqslant\frac{1}{C_0}.
\end{align*}This implies that
\begin{align*}
  A(x,\xi)\geqslant\frac{1}{C_0}\langle \xi\rangle^m,
\end{align*}and for $C_1\in(0, \frac{1}{C_0})$ we have that
\begin{align*}
 A(x,\xi)-C_{1}  \langle \xi\rangle^m\geqslant \left(\frac{1}{C_0}-C_1\right) \langle \xi\rangle^m>0.
\end{align*} 
If  $0\leqslant \delta<\rho\leqslant 1,$ from Corollary \ref{1/2}, we have
\begin{align*}
    q(x,\xi):=(A(x,\xi)-C_{1}  \langle \xi\rangle^m)^{\frac{1}{2}}\in  S^{\frac{m}{2}}_{\rho,\delta}(M\times \mathcal{I}).
\end{align*}From the symbolic calculus we obtain
\begin{align*}
  q(x,D)q(x,D)^{*}= A(x,D)-C_{1}\textnormal{Op}(  \langle \xi\rangle^m)+r(x,D),\,\,   r(x,\xi)\in  S^{m-(\rho-\delta)}_{\rho,\delta}(M\times \mathcal{I}).
\end{align*}
Now, let us assume that $u\in C^\infty_L(M).$ Denoting $\mathcal{M}_{s}:=(1+L^{\circ}L)^{\frac{s}{2\textnormal{ord}(L)}}=\textnormal{Op}(\langle \xi\rangle^s),$ $s\in \mathbb{R},$ we have
\begin{align*}
    \textnormal{Re}(a(x,D)u,u)&=\frac{1}{2}((a(x,D)+\textnormal{Op}(a^*))u,u)=(A(x,D)u,u)\\
    &=C_{1}(\mathcal{M}_{m}u,u)+(q(x,D)q(x,D)^{*}u,u)+(r(x,D)u,u)\\
    &=C_{1}(\mathcal{M}_{m}u,u)+(q(x,D)^{*}u,q(x,D)^*u)-(r(x,D)u,u)\\
    &\geqslant C_{1}\Vert u\Vert_{\mathcal{H}^{\frac{m}{2}}(M)}-(r(x,D)u,u)\\
     &= C_{1}\Vert u\Vert_{\mathcal{H}^{\frac{m}{2}}(M)}-(\mathcal{M}_{-\frac{m-(\rho-\delta)}{2}}r(x,D)u,\mathcal{M}_{\frac{m-(\rho-\delta)}{2}}u).
\end{align*}
Observe that
\begin{align*}
(\mathcal{M}_{-\frac{m-(\rho-\delta)}{2}}r(x,D)u,\mathcal{M}_{\frac{m-(\rho-\delta)}{2}}u)&\leqslant \Vert \mathcal{M}_{-\frac{m-(\rho-\delta)}{2}}r(x,D)u \Vert_{L^2(M)}\Vert u\Vert_{\mathcal{H}^{\frac{m-(\rho-\delta)}{2}}(M)}\\
    &= \Vert r(x,D)u \Vert_{\mathcal{H}^{-\frac{m-(\rho-\delta)}{2}}(M)}\Vert u\Vert_{\mathcal{H}^{\frac{m-(\rho-\delta)}{2}}(M)}\\
     &\leqslant C_1\Vert u \Vert_{\mathcal{H}^{\frac{m-(\rho-\delta)}{2}}(M)}\Vert u\Vert_{\mathcal{H}^{\frac{m-(\rho-\delta)}{2}}(M)},
\end{align*}where in the last line we have used the  Sobolev boundedness of $r(x,D)$ from $\mathcal{H}^{\frac{m-(\rho-\delta)}{2}}(M)$ into $\mathcal{H}^{-\frac{m-(\rho-\delta)}{2}}(M).$ Consequently, we deduce the lower bound
\begin{align*}
    \textnormal{Re}(a(x,D)u,u) \geqslant C_{1}\Vert u\Vert_{\mathcal{H}^{\frac{m}{2}}(M)}-C\Vert u\Vert_{\mathcal{H}^{\frac{m-(\rho-\delta)}{2}}(M)}^2.
\end{align*} If we assume for a moment that for every $\varepsilon>0,$ there exists $C_{\varepsilon}>0,$ such that
\begin{equation}\label{lemararo}
    \Vert u\Vert_{{L}^{2}_{\frac{m-(\rho-\delta)}{2}}(M)}^2\leqslant \varepsilon\Vert u\Vert_{\mathcal{H}^{\frac{m}{2}}(M)}^2+C_{\varepsilon}\Vert u \Vert_{L^2(M)}^2,
\end{equation} for $0<\varepsilon<C_{1}$ we have
\begin{align*}
    \textnormal{Re}(a(x,D)u,u) \geqslant (C_{1}-\varepsilon)\Vert u\Vert^2_{\mathcal{H}^{\frac{m}{2}}(M)}-C_{\varepsilon}\Vert u\Vert_{L^2(M)}^2.
\end{align*}So, with the exception of the proof of \eqref{lemararo}
in view of the analysis above, for the proof of Theorem \ref{GardinTheorem} we only need to prove \eqref{lemararo}. However we will deduce it from the following more general lemma.
\end{proof}
\begin{lemma}Let us assume that $s\geqslant t\geqslant 0$ or that $s,t<0.$ Then, for every $\varepsilon>0,$ there exists $C_\varepsilon>0$ such that 
\begin{equation}\label{lemararo2}
    \Vert u\Vert_{{L}^{2,L}_{t}(M)}^2\leqslant \varepsilon\Vert u\Vert_{{L}^{2,L}_{s}(M)}^2+C_{\varepsilon}\Vert u \Vert_{L^2(M)}^2,
\end{equation}holds true for every $u\in C^\infty_L(M).$ 
\end{lemma}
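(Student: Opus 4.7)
The approach is to reduce the inequality to an elementary pointwise bound on the spectral side $\ind$. By \eqref{SobNorm} the Sobolev norm admits the spectral representation
\begin{equation*}
\|u\|^2_{\mathcal{H}^r_L(M)} = \sum_{\xi\in\ind}\langle\xi\rangle^{2r}\widehat{u}(\xi)\overline{\widehat{u}_*(\xi)},
\end{equation*}
and, as already used in the proof of Theorem \ref{GardinTheorem}, this quantity equals $\|\mathcal{M}_r u\|^2_{L^2(M)}$ with $\mathcal{M}_r=(\mathrm{I}+L^{\circ}L)^{r/(2m)}$. Because each $\mathcal{M}_r$ is obtained from the functional calculus of the single operator $L^{\circ}L$, a scalar inequality $\langle\xi\rangle^{2t}\leq \varepsilon\langle\xi\rangle^{2s}+C_\varepsilon$ on the joint spectrum upgrades at once to the operator inequality $\mathcal{M}_t^{2}\leq \varepsilon\mathcal{M}_s^{2}+C_\varepsilon\mathrm{I}$; pairing with $u$ and applying Parseval (Proposition \ref{PlanchId}) yields exactly \eqref{lemararo2}. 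The proof therefore reduces to establishing, for every $\varepsilon>0$, the pointwise bound
\begin{equation*}
\langle\xi\rangle^{2t} \leq \varepsilon\,\langle\xi\rangle^{2s}+C_\varepsilon \qquad \text{for all}\ \xi\in\ind.
\end{equation*}

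In the case $s>t\geq 0$ I would argue by a low/high spectral split. Fix $R\geq 1$. If $\langle\xi\rangle\leq R$ then $\langle\xi\rangle^{2t}\leq R^{2t}$, while for $\langle\xi\rangle>R$ the sign condition $t-s<0$ gives
\begin{equation*}
\langle\xi\rangle^{2t} \;=\; \langle\xi\rangle^{2(t-s)}\langle\xi\rangle^{2s} \;\leq\; R^{2(t-s)}\langle\xi\rangle^{2s}.
\end{equation*}
Combining these estimates and choosing $R$ large enough that $R^{2(t-s)}\leq\varepsilon$ (possible precisely because $t-s<0$) gives the pointwise inequality with $C_\varepsilon:=R^{2t}$. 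The borderline case $s=t$ is automatic for $\varepsilon\geq 1$, and in the application to Theorem \ref{GardinTheorem} one always has $s=m/2>t=(m-(\rho-\delta))/2$ since $\rho>\delta$.

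In the case $s,t<0$ the bound is essentially free: from the definition \eqref{EQ:angle} we have $\langle\xi\rangle=(1+|\lambda_\xi|^2)^{1/(2m)}\geq 1$ for every $\xi\in\ind$, hence $\langle\xi\rangle^{2t}\leq 1$ uniformly in $\xi$ and the desired inequality collapses to $\|u\|^2_{\mathcal{H}^t_L(M)}\lesssim \|u\|^2_{L^2(M)}$, valid with $\varepsilon=0$ and an absolute $C_\varepsilon$. The only technical care needed in writing this out is the book-keeping between the biorthogonally-weighted sum \eqref{SobNorm} and the unweighted $l^2$-type norm that appears from Lemma \ref{LEM: FTl2}; the constants $m,M,K$ there merely rescale $\varepsilon$ and $C_\varepsilon$ and cause no analytic obstruction. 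Thus the principal difficulty is really just the initial reduction step, after which the two cases are standard one-parameter interpolation arguments.
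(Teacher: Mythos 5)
Your proof is correct and takes essentially the same route as the paper: reduce to the scalar bound $\langle\xi\rangle^{2t}\leq\varepsilon\langle\xi\rangle^{2s}+C_\varepsilon$ for all $\xi\in\ind$ and then sum against the Fourier coefficients via Plancherel. The paper merely asserts this scalar bound, whereas you actually prove it by a low/high spectral split and by the observation $\langle\xi\rangle\geq 1$ when $s,t<0$; the detour through the operator inequality $\mathcal{M}_t^2\leq\varepsilon\mathcal{M}_s^2+C_\varepsilon\mathrm{I}$ is unnecessary (and slightly off-target, since Parseval in this biorthogonal setting produces the $\widehat{u}\,\overline{\widehat{u}_*}$-paired $\mathcal{H}^t_L$ norm rather than the $|\widehat{u}(\xi)|^2$-weighted ${L}^{2,L}_t$ norm appearing in the lemma) --- the paper's direct summation $\sum_\xi\langle\xi\rangle^{2t}|\widehat{u}(\xi)|^2\leq\sum_\xi(\varepsilon\langle\xi\rangle^{2s}+C_\varepsilon)|\widehat{u}(\xi)|^2$ closes the argument with none of the norm-equivalence book-keeping you flag at the end.
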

\begin{proof}
    Let $\varepsilon>0.$ Then, there exists $C_{\varepsilon}>0$ such that
    \begin{equation*}
        \langle \xi\rangle^{2t}-\varepsilon \langle \xi\rangle^{2s}\leqslant C_{\varepsilon},
    \end{equation*}uniformly in $\xi\in \mathcal{I}.$ Then \eqref{lemararo2} it follows from the Plancherel theorem. Indeed,
    \begin{align*}
     \Vert u\Vert_{{L}^{2,L}_{t}(M)}^2& =\sum_{\xi\in \mathcal{I}} \langle \xi\rangle^{2t}|\widehat{u}(\xi)|^{2} \leqslant  \sum_{\xi\in \mathcal{I}} (\varepsilon\langle \xi\rangle^{2s}+C_\varepsilon)|\widehat{u}(\xi)|^{2}  \\
    &= \varepsilon\Vert u\Vert_{{L}^{2,L}_{s}(M)}^2+C_{\varepsilon}\Vert u \Vert_{L^2(M)}^2,
    \end{align*} 
    completing the proof.
\end{proof}
\begin{corollary}\label{GardinTheorem2}  Let $a(x,D):C^\infty_L(M)\rightarrow\mathcal{D}'_L(M)$ be an operator with symbol  $a\in {S}^{m}_{\rho,\delta}( M\times \mathcal{I})$, $m\in \mathbb{R}$. Let us assume that 
\begin{equation*}
    a(x,\xi)\geqslant 0,\,(x,\xi)\in M\times \mathcal{I}, 
\end{equation*}satisfies
\begin{align*}\label{garding22}
    \vert\langle \xi\rangle^ma(x,\xi)^{-1} \vert\leqslant C_{0}.
\end{align*}Then, there exist $C_{1},C_{2}>0,$ such that the lower bound
\begin{align}
    \textnormal{Re}(a(x,D)u,u) \geqslant C_1\Vert u\Vert_{\mathcal{H}^{\frac{m}{2}}(M)}-C_2\Vert u\Vert_{L^2(M)}^2,
\end{align}holds true for every $u\in C^\infty_L(M).$
\end{corollary}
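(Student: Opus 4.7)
The plan is to reduce Corollary \ref{GardinTheorem2} to Theorem \ref{GardinTheorem} by checking that the nonnegativity hypothesis on $a$ makes the symbol automatically match the symmetrization hypothesis used there. First I would observe that $a(x,\xi)\geq 0$ for all $(x,\xi)\in M\times\mathcal{I}$ forces $a(x,\xi)$ to be real-valued, hence $\overline{a(x,\xi)}=a(x,\xi)$.

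Consequently, the symmetrized symbol appearing in Theorem \ref{GardinTheorem},
\begin{equation*}
A(x,\xi):=\tfrac{1}{2}\bigl(a(x,\xi)+\overline{a(x,\xi)}\bigr),
\end{equation*}
coincides identically with $a(x,\xi)$. The assumption $|\langle\xi\rangle^{m} a(x,\xi)^{-1}|\leq C_{0}$ therefore reads $|\langle\xi\rangle^{m} A(x,\xi)^{-1}|\leq C_{0}$, which is precisely the lower bound hypothesis of Theorem \ref{GardinTheorem}. Since $a\in S^{m}_{\rho,\delta}(M\times\mathcal{I})$ and $0\leq\delta<\rho\leq 1$ is assumed for the symbolic calculus in that theorem, all hypotheses are met.

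Applying Theorem \ref{GardinTheorem} directly to $a(x,D)$ yields constants $C_{1},C_{2}>0$ such that
\begin{equation*}
\textnormal{Re}(a(x,D)u,u)\geq C_{1}\|u\|_{\mathcal{H}^{m/2}(M)}-C_{2}\|u\|_{L^{2}(M)}^{2},\quad u\in C^{\infty}_{L}(M),
\end{equation*}
which is exactly the inequality in the corollary. There is no genuine obstacle; the only point worth verifying carefully is the identification $A=a$, which uses nothing beyond the reality of nonnegative numbers. In particular no new symbolic or parametrix construction is required, as the square-root trick and the interpolation lemma \eqref{lemararo2} were already absorbed into the statement of Theorem \ref{GardinTheorem}.
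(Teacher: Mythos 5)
Your proof is correct and is precisely the reduction the paper intends: the corollary is a direct specialization of Theorem \ref{GardinTheorem} once one notes that $a(x,\xi)\geq 0$ forces $a$ to be real-valued, so $A(x,\xi)=\tfrac{1}{2}(a+\overline{a})=a$ and the hypotheses of the theorem are met verbatim. The paper gives no separate proof because this observation is all that is needed.
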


\section{$L^2$-estimates for pseudo-differential operators} 
\label{L^2boun}

In this section we prove the following theorem.
\begin{theorem}
\label{L2}  
Let $a(x,D):C^\infty_L(M)\rightarrow\mathcal{D}'_L(M)$ be a pseudo-differential  operator with symbol  $a\in {S}^{0}_{\rho,\delta}( M\times \mathcal{I})$ with  $0 \leq \delta <\rho \leq 1.$ Then $a(x,D)$ extends to a bounded operator on $L^2({M})$. 
\end{theorem}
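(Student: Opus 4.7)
The plan is to derive $L^2$-boundedness of $A = a(x,D)$ from the G\r{a}rding inequality (Theorem \ref{GardinTheorem}) applied to the self-adjoint operator $M_{0} I - A^*A$ for a sufficiently large constant $M_{0} > 0$.

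Invoking the adjoint and composition theorems of Section \ref{Sect2}, since $a \in S^0_{\rho,\delta}(M\times\mathcal{I})$ I have $A^*A \in \textnormal{Op}(S^0_{\rho,\delta})$ with symbol of the form
\[
\sigma_{A^*A}(x,\xi) = |a(x,\xi)|^2 + r(x,\xi), \qquad r \in S^{-(\rho-\delta)}_{\rho,\delta}(M\times\mathcal{I}).
\]
Since symbols of order zero are uniformly bounded on $M\times\mathcal{I}$, the quantity $K := \sup_{(x,\xi)} |\sigma_{A^*A}(x,\xi)|$ is finite. I would fix $M_{0} > K + 1$ and consider the symbol $p(x,\xi) := M_{0} - \sigma_{A^*A}(x,\xi) \in S^0_{\rho,\delta}(M\times\mathcal{I})$. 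Its Hermitian part $A_{p}(x,\xi) := \tfrac{1}{2}(p(x,\xi) + \overline{p(x,\xi)}) = M_{0} - |a(x,\xi)|^2 - \textnormal{Re}\, r(x,\xi)$ satisfies $A_{p} \ge 1$ uniformly, whence $|\langle\xi\rangle^{0} A_{p}(x,\xi)^{-1}| \le 1$. Applying Theorem \ref{GardinTheorem} with $m = 0$ to $\textnormal{Op}(p) = M_{0}I - A^*A$ then yields constants $C_1, C_2 > 0$ such that
\[
\textnormal{Re}((M_{0}I - A^*A)u, u)_{L^2(M)} \ge C_1 \|u\|_{L^2(M)}^2 - C_2 \|u\|_{L^2(M)}^2, \qquad u \in C^\infty_L(M).
\]

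Since $A^*A$ is self-adjoint, the pairing $((M_{0}I - A^*A)u, u) = M_{0}\|u\|_{L^2}^2 - \|Au\|_{L^2}^2$ is automatically real, so the above rearranges to
\[
\|Au\|_{L^2(M)}^2 \le (M_{0} + C_2 - C_1)\|u\|_{L^2(M)}^2,
\]
and by density of $C^\infty_L(M) \subset L^2(M)$, the operator $A$ extends to a bounded linear operator on $L^2(M)$.

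The hard part will be verifying the hypotheses of the G\r{a}rding inequality for $\textnormal{Op}(p)$ and, more seriously, circumventing the apparent circularity with the Sobolev boundedness of the remainder term $R$ in $\textnormal{Op}(S^{-(\rho-\delta)}_{\rho,\delta})$ invoked inside the proof of Theorem \ref{GardinTheorem}. I would handle this by a preliminary downward bootstrap: first establish $L^2$-boundedness for $\textnormal{Op}(S^{-N}_{\rho,\delta})$ with $N$ large enough (in terms of the regularity index $s_{0}$ of Assumption \ref{Assumption_4}) that the Schwartz kernel expansion from Corollary \ref{COR: SymFor}(v) is square-integrable on $\Omega \times \Omega$, making the operator Hilbert--Schmidt; then climb upward in steps of $(\rho-\delta)$ by iterating the identity $B_{k}^*B_{k} + A_{k}^*A_{k} = M_{k}I + R_{k+1}$ obtained from the square-root construction of Corollary \ref{1/2}, until reaching order zero.
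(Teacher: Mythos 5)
Your final plan is essentially the paper's proof, reached after a detour: the paper never invokes the G\r{a}rding inequality for this theorem and instead proceeds directly with the bootstrap you sketch at the end. The opening idea of feeding $M_0 I - A^*A$ into Theorem \ref{GardinTheorem} is circular exactly as you suspect, since that theorem's proof already calls on Sobolev boundedness of a remainder in $S^{m-(\rho-\delta)}_{\rho,\delta}$, which is equivalent (after conjugation by powers of $\mathcal{M}_{s}$) to the $L^2$-boundedness of order-zero operators you are trying to establish. Once you abandon G\r{a}rding and run the bootstrap, the argument coincides in spirit with the paper's: take $-m_0 \le -s_0$ so that the Schwartz kernel from Corollary \ref{COR: SymFor}(v) is square-integrable, giving Hilbert--Schmidt (hence $L^2$) boundedness at order $-m_0$; then climb. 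The only real difference is the climbing mechanism. The paper halves the order at each step via the identity $\|a(x,D)u\|^2 = (a^*(x,D)a(x,D)u,u)$ with $a^*(x,D)a(x,D)$ of order $2m$, iterating $-m_0 \to -m_0/2 \to \cdots$ until the order is above $-(\rho-\delta)$, and then uses the square-root construction of Corollary \ref{1/2} once, at order zero, to produce the identity $c^*c + a^*a = MI + r$ with $r\in S^{-(\rho-\delta)}_{\rho,\delta}$. You propose iterating the square-root identity at intermediate orders; that also works, but the gain per step is in fact $-m + (\rho-\delta)$ rather than a fixed $\rho-\delta$: since for $a\in S^{m}_{\rho,\delta}$ with $m<0$ one has $c=(M-|a|^2)^{1/2} = M^{1/2} + c_1$ with $c_1\in S^{2m}_{\rho,\delta}$, the remainder in $c^*c + a^*a = MI + R$ lands in $S^{2m-(\rho-\delta)}_{\rho,\delta}$, so boundedness at order $2m-(\rho-\delta)$ implies it at order $m$. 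Either mechanism closes the induction; just be careful when writing it up to state the step size and the base case so that the range $(-m_0,0]$ is actually covered.
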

\begin{proof}
Assume first that $a(x,\xi)\in {S}^{-m_0}_{\rho',\delta'}(M\times \mathcal{I}),$ where $m_0>0.$ The  kernel of $a(x,D)=\textnormal{Op}(a),$ $K_{a}(x,y),$ belongs to $L^\infty(M\times M)$ for $m_0$ large enough. Indeed, by using
$$
K_{a}(x,y)=\sum_{\xi\in \mathcal{I}}u_{\xi}(x)\overline{v_{\xi}(y)}a(x,\xi),
$$
let us identify for which $m_0,$ $a(x,D)$ is Hilbert-Schmidt. Since, $a(x,D)$ is Hilbert-Schmidt if and only if $K_a\in L^{2}(M\times M)$. By simple calculations, we obtain 
\begin{align*}
\Vert K_a(x,y) \Vert_{L^2(M\times M)} & \leq \sum_{\xi\in \mathcal{I}}\sup_{x\in M}|a(x,\xi)| \Vert u_{\xi}(x) v_{\xi}(y)\Vert_{L^{2}(M\times M)}\\
    &\lesssim  \sum_{\xi\in \mathcal{I}}\langle \xi\rangle^{-m_0} \Vert u_{\xi}\Vert_{L^{2}(M)}\Vert{v_{\xi}}\Vert_{L^{2}(M)}\\
    &=  \sum_{\xi\in \mathcal{I}}\langle \xi\rangle^{-m_0}.
\end{align*} 
Thus, for $m_0\geq s_{0},$ $a(x,D)$ is Hilbert-Schmidt on $L^2(M)$ and, consequently, a bounded operator on $L^2(M)$.

Next by induction we prove that $a(x,D)$ is $L^2$-bounded if $p(x,\xi)\in  {S}^{-m_0,\mathcal{L}}_{\rho',\delta'}(M\times \mathcal{I}),$ for $m_0< m\leqslant -(\rho'-\delta').$ To do so,  for $u\in C^{\infty}(M)$ we form
\begin{align*}
\Vert a(x,D)u \Vert^2_{L^2(M)}&=( a(x,D)u, a(x,D)u)_{L^2(M)}\\
&=( a^{*}(x,D)a(x,D)u, u)_{L^2(M)}\\
&=( b(x,D)u,u )_{L^2(M)} ,
\end{align*}
where $b(x,D)=a^{*}(x,D)a(x,D)$ has a symbol in ${S}^{2m,\mathcal{L}}_{\rho',\delta'}(M\times \mathcal{I}),$ for $0\leqslant \delta'<\rho'\leqslant 1.$ From the induction hypothesis the continuity of $a(x,D)$ for all $a\in S^{2m,\mathcal{L}}_{\rho',\delta'}$ follows successively for $m\leqslant-\frac{m_0}{2},-\frac{m_0}{4},\cdots , -\frac{m_0}{2^{\ell_0}},\cdots ,$ $\ell_0\in\mathbb{N},$ and hence for $m\leqslant -\frac{m_0}{2^{\ell_0}}$ where $\frac{m_0}{2^{\ell_0}}<\rho'-\delta',$ after a finite number of steps. 

Assume that $a(x,\xi)\in S^{0,\mathcal{L}}_{\rho',\delta'}(M\times \mathcal{I})$, and choose  
$$ M>2\sup_{(x,\xi)} \vert a(x,\xi) \vert^{2}, $$ then $c(x,\xi)=(M-a(x,\xi)a(x,\xi)^*
 )^{1/2}\in S^{0,\mathcal{L}}_{\rho',\delta'}(M\times \mathcal{I}).$
Now, we have
$$  
c(x,D)^{*}c(x,D)=M-a^{*}(x,D)a(x,D)+r(x,D),
$$
where $r\in {S}^{-(\rho'-\delta')}_{{\rho',\delta'}}(M\times \mathcal{I}).$
Hence, $\Vert a(x,D) \Vert_{\mathscr{B}(L^2)}\leqslant M+\Vert r(x,D)\Vert_{\mathscr{B}(L^2)}. $
\end{proof}
 
 \begin{remark}
 For the $L^{p}$-$L^q$-boundedness of pseudo-differential operators in the setting of non-harmonic analysis we refer the reader to \cite{CardVishTokRuzI}.
 \end{remark}

\section{Global solvability  for evolution problems}\label{GST}

In this section we apply the G\r{a}rding inequality to some problems of PDEs, the global solvability of parabolic and hyperbolic type problems associated with the non-harmonic pseudo-differential calculus. More precisely, we study the existence and uniqueness of the solution of the Cauchy problem 
\begin{equation}
\label{PVI}
(\textnormal{PVI}): 
\begin{cases}\frac{\partial v}{\partial t}=K(t,x,D)v+f ,& 
\\v(0)=u_0, & \text{ } 
\end{cases}
\end{equation} 
where the initial data $u_0\in L^2(M),$ $K(t):=K(t,x,D)$ with a symbol in $S^m_{\rho,\delta}(M\times \mathcal{I}),$ $f\in  L^2([0,T]\times M) \simeq L^2([0,T],L^2(M)) ,$ $m>0,$ and  a suitable positivity condition is imposed on $K.$ 

We say that the problem \eqref{PVI} has a solution if there exists $v\in \mathscr{D}'((0,T)\times M)$ which satisfies the equation in  \eqref{PVI} with the initial condition $v(0)=u_0\in L^2(M)$ such that $v\in C^1([0,T],L^2(M))\bigcap C([0,T],\mathcal{H}^{m,L}(M)).$

In what follows, we assume that
\begin{equation*}
    \textnormal{Re}(K(t)):=\frac{1}{2}(K(t)+K(t)^*),\,\,0\leqslant t\leqslant T,\footnote{ This means that $A=K(t)$ is strongly $L$-elliptic.}
\end{equation*} 
is $L$-elliptic. Under such assumption we prove the existence and uniqueness of the solution $v\in C^1([0,T],L^2(M))\bigcap C([0,T],\mathcal{H}^{m,L}(M)).$ We start with the following energy estimate.

\begin{theorem}
\label{energyestimate}
Let $K(t)=K(t,x,D),$ $0\leqslant \delta<\rho\leqslant  1,$  be a  pseudo-differential operator of order $m>0$ with a symbol in $S^m_{\rho,\delta}(M\times \mathcal{I}).$ Assume that $\textnormal{Re}(K(t))$ is an $L$-elliptic operator, for every $t\in[0,T]$ with $T>0.$ If  
$$
v\in C^1([0,T], L^2(M) )  \bigcap C([0,T],\mathcal{H}^{m,L}(M))
$$ 
is a solution of the problem \eqref{PVI} then there exist $C, C'(T)>0$ such that
\begin{equation}
\Vert v(t)\Vert_{L^2(M)}\leqslant C \Vert u_0\Vert^2_{L^2(M)}+C'(T)\int\limits_{0}^T \Vert (\partial_{t}-K(\tau))v(\tau) \Vert^2_{L^2(M)}d\tau,  
\end{equation} 
holds for every $0\leqslant t\leqslant T.$ 

Moreover, we also have the estimate
\begin{equation}\label{Q*}
\Vert v(t)\Vert_{L^2(M)}\leqslant  C \Vert u_0\Vert^2_{L^2(M)} + C'(T) \int\limits_{0}^T \Vert (\partial_{t}-K(\tau)^{*})v(\tau) \Vert^2_{L^2(M)}d\tau.  
\end{equation}
\end{theorem}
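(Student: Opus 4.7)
The plan is to derive a differential inequality for $t \mapsto \|v(t)\|^2_{L^2(M)}$ and then apply Gronwall's lemma (the squared $L^2$-norm on the left is clearly intended on both sides of the asserted inequality).

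First, since $v \in C^1([0,T], L^2(M))$, I compute $\frac{d}{dt}\|v(t)\|^2_{L^2} = 2\,\textnormal{Re}(\partial_t v(t), v(t))_{L^2}$. Writing $g(t) := (\partial_t - K(t))v(t)$ and substituting $\partial_t v = K(t)v + g$ yields
$$\frac{d}{dt}\|v(t)\|^2_{L^2(M)} = 2\,\textnormal{Re}(g(t), v(t))_{L^2} + \bigl((K(t)+K(t)^{*})v(t), v(t)\bigr)_{L^2}.$$
Cauchy--Schwarz together with Young's inequality controls the first term by $2|\textnormal{Re}(g,v)| \leq \|g\|^2_{L^2} + \|v\|^2_{L^2}$. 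For the quadratic form $((K(t)+K(t)^{*})v, v)_{L^2} = 2(\textnormal{Re}(K(t))v, v)_{L^2}$, the strong $L$-ellipticity assumption (interpreted as positivity of the principal symmetric symbol of $-K(t)$) lets me apply the Gårding inequality of Corollary \ref{GardinTheorem2} to $-\textnormal{Re}(K(t))$, obtaining
$$\bigl((K(t) + K(t)^{*})v, v\bigr)_{L^2} \leq 2 C_{2}\|v\|^2_{L^2(M)} - 2 C_{1}\|v\|^2_{\mathcal{H}^{m/2}_L(M)} \leq 2 C_{2}\|v\|^2_{L^2(M)}.$$

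Combining these estimates gives the differential inequality $\frac{d}{dt}\|v(t)\|^2_{L^2} \leq \kappa\|v(t)\|^2_{L^2} + \|g(t)\|^2_{L^2}$ with $\kappa := 2C_2 + 1$. Gronwall's lemma then yields
$$\|v(t)\|^2_{L^2} \leq e^{\kappa t}\|u_0\|^2_{L^2} + \int_{0}^{t} e^{\kappa(t-s)}\|g(s)\|^2_{L^2}\, ds \leq e^{\kappa T}\|u_0\|^2_{L^2} + e^{\kappa T}\int_{0}^{T}\|g(\tau)\|^2_{L^2}\,d\tau,$$
which is the first claimed inequality with $C = C'(T) = e^{\kappa T}$. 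For \eqref{Q*}, I observe that $\textnormal{Re}(K(t)^*) = \textnormal{Re}(K(t))$, so $K(t)^*$ satisfies the same strong ellipticity condition; setting $h := (\partial_t - K(t)^*)v$ and noting $2\textnormal{Re}(K(t)^* v, v) = ((K(t) + K(t)^*)v, v)$, the preceding argument applies verbatim with $h$ in place of $g$.

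The main technical obstacle I anticipate is the careful treatment of lower-order remainder terms from the non-commutative symbolic calculus: the operator $K(t) + K(t)^{*}$ is self-adjoint, but its symbol agrees with $2\,\textnormal{Re}(\sigma_{K(t)})$ only modulo an error in $S^{m - (\rho - \delta)}_{\rho,\delta}(M\times\mathcal{I})$ arising from the asymptotic expansion of the adjoint symbol. This error contributes a quadratic form of order $\|v\|^2_{\mathcal{H}^{(m-(\rho-\delta))/2}_L}$, which must be absorbed into the Gårding term $C_1 \|v\|^2_{\mathcal{H}^{m/2}_L}$. This is achieved via the interpolation inequality \eqref{lemararo2} established in the proof of Theorem \ref{GardinTheorem}, at the cost of enlarging the constant $C_2$ appearing in $\kappa$. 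Once this absorption is verified, the remainder of the argument is the routine Gronwall application outlined above.
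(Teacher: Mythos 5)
Your proof is correct and follows essentially the same route as the paper's: differentiate $\Vert v(t)\Vert_{L^2}^2$, apply the G\r{a}rding inequality (Corollary~\ref{GardinTheorem2}) to $-\textnormal{Re}(K(t))$ so that the elliptic term can be discarded, control the source term by Young's inequality (the paper uses the equivalent parallelogram-law estimate), and conclude with Gronwall's lemma. Your direct derivation of \eqref{Q*} from the identity $\textnormal{Re}(K(t)^{*})=\textnormal{Re}(K(t))$ is a clean shortcut past the paper's time-reversal argument, which rests on the same identity.
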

\begin{proof} 
Let $v\in C^1([0,T], L^2(M) )  \cap C([0,T],\mathcal{H}^{m,L}(M)).$ Let us start by observing  that   $v\in  C([0,T],\mathcal{H}^{\frac{m}{2},L}(M))$ because of the embedding $\mathcal{H}^{m,L}\hookrightarrow \mathcal{H}^{\frac{m}{2},L}.$ This fact will be useful later because we will use the G\r{a}rding inequality applied to the operator $\textnormal{Re}(K(t)).$ So, 
$v\in \textnormal{Dom}(\partial_{\tau}-K(\tau))$ for every $0\leqslant \tau\leqslant T.$ In view of the embedding $\mathcal{H}^{m,L}\hookrightarrow L^2(M),$ we also have that  $v\in C([0,T], L^2(M) ).$  Let us define $f(\tau):=Q(\tau)v(\tau),$ $Q(\tau):=(\partial_{\tau}-K(\tau)),$ for every $0\leqslant \tau\leqslant T.$ Observe that

\begin{align*}
   \frac{d}{dt}\Vert v(t)\Vert^2_{L^2(M)}&= \frac{d}{dt}\left(v(t),v(t)\right)_{L^2(M)}\\&=\left(\frac{d v(t)}{dt},v(t)\right)_{L^2(M)}+\left(v(t),\frac{d v(t)}{dt}\right)_{L^2(M)}\\
   &=\left(K(t)v(t)+f(t),v(t)\right)_{L^2(M)}+\left(v(t),K(t)v(t)+f(t)\right)_{L^2(M)}\\
    &=\left((K(t)+K(t)^{*})v(t),v(t)\right)_{L^2(M)}+2\textnormal{Re}(f(t), v(t))_{L^2(M)}\\
     &=\textnormal{Re}(K(t)v(t),v(t))_{L^2(M)}+2\textnormal{Re}(f(t), v(t))_{L^2(M)}.
\end{align*}Now, from the  G\r{a}rding inequality, 
\begin{align}
    \textnormal{Re}(-K(t)v(t),v(t)) \geqslant C_1\Vert v(t)\Vert_{\mathcal{H}^{\frac{m}{2},L}(M)}-C_2\Vert v(t)\Vert_{L^2(M)}^2,
\end{align} 
and from the parallelogram law, we have
\begin{align*}
 2\textnormal{Re}(f(t), v(t))_{L^2(M)}&\leqslant 2\textnormal{Re}(f(t), v(t))_{L^2(M)}+\Vert f(t)\Vert_{L^2(M)}^2+\vert v(t)\vert_{L^2(M)}^2 \\
 &= \Vert f(t)+v(t)\Vert^2\leqslant \Vert f(t)+v(t)\Vert^2+\Vert f(t)-v(t)\Vert^2 \\
&= 2\Vert f(t)\Vert^2_{L^2(M)}+2\Vert v(t)\Vert^2_{L^2(M)}.
\end{align*}
Thus, we obtain
\begin{align*}
   & \frac{d}{dt}\Vert v(t)\Vert^2_{L^2(M)}\\
   &\leqslant 2\left(C_2\Vert v(t)\Vert_{L^2(M)}^2-C_1\Vert v(t)\Vert_{\mathcal{H}^{\frac{m}{2},L}(M)}\right)+2\Vert f(t)\Vert^2_{L^2(M)}+2\Vert v(t)\Vert^2_{L^2(M)}.
\end{align*}  
So, we have proved that
\begin{align*}
   \frac{d}{dt}\Vert v(t)\Vert^2_{L^2(M)}\lesssim  \Vert f(t)\Vert^2_{L^2(M)}+\Vert v(t)\Vert^2_{L^2(M)}.
\end{align*}
By using Gronwall's Lemma, we obtain the energy estimate
\begin{equation}
\label{GI-03}
\Vert v(t)\Vert^2_{L^2(M)}\leqslant  C\Vert u_0\Vert_{L^2(M)}^2+C'(T)\int\limits_{0}^T \Vert f(\tau) \Vert_{L^2(M)}^2d\tau,   
\end{equation}
for every $0\leqslant t\leqslant T,$ and $T>0.$ To finish the proof, we can change the calculations above with $v(T-\cdot)$ instead of $v(\cdot),$ $f(T-\cdot)$ instead of $f(\cdot)$ and $Q^{*}=-\partial_{t}-K(t)^{*},$ (or equivalently $Q=\partial_{t}-K(t)$ ) instead of $Q^{*}=-\partial_{t}+K(t)^{*}$ (or equivalently $Q=\partial_{t}-K(t)$) using that $\textnormal{Re}(K(T-t)^*)=\textnormal{Re}(K(T-t))$ to deduce that
\begin{align*}
&\Vert v(T-t) \Vert^2_{L^2(M)}\\
&\leqslant C\Vert u_0\Vert^2_{L^2(M)}+C'(T)\int\limits_{0}^{T} \Vert (-\partial_{t}+K(T-t)^{*})v(T-\tau) \Vert^2_{L^2(M)}d\tau \\
&=   C\Vert u_0\Vert^2_{L^2(M)}+C'(T)\int\limits_{0}^T \Vert (-\partial_{t}-K(t)^{*})v(s) \Vert^2_{L^2(M)}ds.
\end{align*}So, we conclude the proof.
\end{proof}

\begin{theorem}
Let $K(t)=K(t,x,D)\in S^m_{\rho,\delta}(M\times \mathcal{I}),$ $0\leqslant \delta<\rho\leqslant  1,$  be a  pseudo-differential operator of order $m>0,$ and let us assume that $\textnormal{Re}(K(t))$ is $L$-elliptic, for every $t\in[0,T]$ with $T>0.$ Let   $f\in L^2(M)$.  Then there exists a unique solution $v\in C^1([0,T], L^2(M) )  \bigcap C([0,T],\mathcal{H}^{m,L}(M))$ of the problem \eqref{PVI}. Moreover, $v$ satisfies the energy estimate
\begin{equation}
\Vert v(t)\Vert^2_{L^2(M)}\leqslant  \left(C\Vert u_0 \Vert^2_{L^2(M)}+C'\Vert f \Vert^2_{L^2([0,T],L^2(M))} \right),
\end{equation}for every $0\leqslant t\leqslant T.$
\end{theorem}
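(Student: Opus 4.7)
\emph{Uniqueness} is immediate from Theorem \ref{energyestimate}. If $v_1, v_2$ are two solutions of (PVI) in the stated regularity class, then $w := v_1 - v_2$ belongs to $C^1([0,T],L^2(M)) \cap C([0,T],\mathcal{H}^{m,L}(M))$ and satisfies $(\partial_t - K(t))w = 0$ with $w(0) = 0$. The first energy estimate of Theorem \ref{energyestimate} gives
$$\|w(t)\|_{L^2(M)}^2 \le C\cdot 0 + C'(T)\int_0^T \|0\|_{L^2(M)}^2\, d\tau = 0,$$
so $v_1 \equiv v_2$.

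\emph{Existence by duality.} The plan is to produce $v$ as a weak solution via the Hahn--Banach/Riesz recipe and then upgrade regularity. First note that since $K(t) \in \mathrm{Op}_L(S^m_{\rho,\delta})$, the adjoint $K(t)^*$ belongs to $\mathrm{Op}_{L^*}(\widetilde S^m_{\rho,\delta})$ by the adjoint theorem, and $\mathrm{Re}(K(t)^*) = \mathrm{Re}(K(t))$ is still $L$-elliptic. Hence the second estimate \eqref{Q*} of Theorem \ref{energyestimate} applies to the backward adjoint operator $Q^* = -\partial_t - K(t)^*$ on test functions $\varphi \in C^1([0,T],L^2(M)) \cap C([0,T],\mathcal{H}^{m,L}(M))$ with $\varphi(T) = 0$, yielding
$$\sup_{t\in [0,T]}\|\varphi(t)\|_{L^2(M)}^2 \le C'(T)\int_0^T \|Q^*\varphi(\tau)\|_{L^2(M)}^2\, d\tau.$$
Define on the subspace of such $\varphi$ the linear functional
$$\ell(\varphi) := \int_0^T (f(t),\varphi(t))_{L^2(M)}\, dt + (u_0,\varphi(0))_{L^2(M)}.$$
By Cauchy--Schwarz and the above inequality,
$$|\ell(\varphi)| \le \bigl(T^{1/2}\|f\|_{L^2([0,T],L^2)} + \|u_0\|_{L^2}\bigr)\, C'(T)^{1/2}\, \|Q^*\varphi\|_{L^2([0,T],L^2)}.$$
Thus $Q^*\varphi \mapsto \ell(\varphi)$ is well-defined and bounded on the image of $Q^*$ inside $L^2([0,T],L^2(M))$. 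Extending by Hahn--Banach and invoking Riesz representation produces $v \in L^2([0,T],L^2(M))$ with
$$\int_0^T (v(t),Q^*\varphi(t))_{L^2}\, dt = \int_0^T (f,\varphi)_{L^2}\, dt + (u_0,\varphi(0))_{L^2}$$
for all admissible $\varphi$. Testing against $\varphi \in C_c^\infty((0,T); C_L^\infty(\overline\Omega))$ shows $(\partial_t - K(t))v = f$ in $\mathcal{D}'((0,T)\times M)$; testing against $\varphi$ with $\varphi(0)\ne 0$ recovers $v(0) = u_0$.

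\emph{Regularity upgrade.} This is the step I expect to be the main obstacle. To pass from a mere $L^2$ weak solution to $v \in C^1([0,T],L^2) \cap C([0,T],\mathcal{H}^{m,L})$, the cleanest approach is to run a Galerkin-type argument on the spaces $V_N := \mathrm{span}\{u_\xi : \langle\xi\rangle \le N\}$ using the biorthogonality with $\{v_\xi\}$: solve the finite-dimensional ODE $\partial_t v_N = P_N K(t) v_N + P_N f$ with $v_N(0) = P_N u_0$, where $P_N$ is the projection via the $L$-Fourier expansion. Apply the G\r{a}rding inequality (Theorem \ref{GardinTheorem}) exactly as in the proof of Theorem \ref{energyestimate} to obtain the uniform bound
$$\sup_{t}\|v_N(t)\|_{L^2}^2 + \int_0^T \|v_N(\tau)\|_{\mathcal{H}^{m/2,L}}^2\, d\tau \le C\bigl(\|u_0\|_{L^2}^2 + \|f\|_{L^2(L^2)}^2\bigr),$$
and pass to a weak-$*$ limit; uniqueness then forces this limit to coincide with $v$. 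The equation $\partial_t v = K(t)v + f$ combined with the $L^2$-boundedness of $\mathrm{Op}_L(S^0_{\rho,\delta})$ from Theorem \ref{L2} (after multiplying by a zeroth-order reduction) gives $v \in C([0,T],\mathcal{H}^{m,L})$ and hence $\partial_t v \in C([0,T],L^2)$, i.e., $v \in C^1([0,T],L^2)$.

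\emph{Energy estimate.} Having established $v$ in the required class, the claimed estimate
$$\|v(t)\|_{L^2(M)}^2 \le C\|u_0\|_{L^2(M)}^2 + C'\|f\|_{L^2([0,T],L^2(M))}^2$$
is exactly the Gronwall conclusion of Theorem \ref{energyestimate} applied to $v$, since $(\partial_t - K(t))v = f$.
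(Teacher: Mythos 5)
Your approach to existence is genuinely different from the paper's published proof. The paper disposes of existence in a single clause, citing ``the classical Picard iteration theorem,'' without addressing the fact that $K(t)\in\textnormal{Op}_L(S^m_{\rho,\delta})$ with $m>0$ is an unbounded operator on $L^2(M)$, so a naive Picard iterate $v^{(n+1)}(t)=u_0+\int_0^t(K(\tau)v^{(n)}(\tau)+f(\tau))\,d\tau$ loses $m$ derivatives at each step and cannot converge from data in $L^2$ alone. Your duality route — the backward energy estimate \eqref{Q*} on test functions vanishing at $t=T$, then Hahn--Banach and Riesz representation to produce a weak $L^2_{t,x}$ solution — is the standard method for linear evolution equations and is, if anything, closer to correct than what is printed; your uniqueness argument coincides exactly with the paper's.

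That said, there is a real gap in the regularity upgrade, and one that cannot be waved away by citing Theorem~\ref{L2}. Two issues. (i) The Galerkin projections $P_N$ you propose, built from the biorthogonal system $\{u_\xi\}$, $\{v_\xi\}$, do \emph{not} satisfy $(P_N K v_N, v_N)_{L^2}=(K v_N, v_N)_{L^2}$, because in general $(u_\xi,u_\eta)_{L^2}\neq\delta_{\xi\eta}$ (only $(u_\xi,v_\eta)_{L^2}=\delta_{\xi\eta}$ is guaranteed); so the G\r{a}rding inequality does not pass to the truncated system without further argument, e.g.\ a Yosida or mollifier approximation of $K(t)$ rather than a biorthogonal Fourier truncation. (ii) Even granting the Galerkin bound, it delivers only $v\in L^\infty([0,T],L^2(M))\cap L^2([0,T],\mathcal{H}^{m/2,L}(M))$, not $C^1([0,T],L^2(M))\cap C([0,T],\mathcal{H}^{m,L}(M))$: in particular $v\in C([0,T],\mathcal{H}^{m,L}(M))$ forces $u_0=v(0)\in\mathcal{H}^{m,L}(M)$, which is not assumed. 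The stated conclusion requires either $u_0\in\mathcal{H}^{m,L}(M)$ together with more time-regularity of $f$, or a parabolic-smoothing estimate for $t>0$ with continuity at $t=0$ held only in $L^2(M)$. This mismatch is already present in the theorem's statement, so the gap is partly inherited; but the ``regularity upgrade'' paragraph as written does not close it.
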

\begin{proof}
The energy estimate \eqref{GI-03} and the classical Picard iteration theorem implies the existence result. Now, in order to show the uniqueness of $v,$ let us assume that $u\in  C^1([0,T], L^2(M) )  \bigcap C([0,T],\mathcal{H}^{m,L}(M))$ is also a solution of the problem
\begin{equation*} \begin{cases}\frac{\partial u}{\partial t}=K(t,x,D)u+f ,& \text{ }u\in \mathscr{D}'((0,T)\times M),
\\u(0)=u_0 .& \text{ } \end{cases}
\end{equation*} Then $\omega:=v-u\in C^1([0,T], L^2(M) )  \bigcap C([0,T],\mathcal{H}^{m,L}(M))$ solves the problem
\begin{equation*} 
\begin{cases}
\frac{\partial \omega}{\partial t}=K(t,x,D)\omega,& \text{ }\omega\in \mathscr{D}'((0,T)\times M),
\\\omega(0)=0 .& \text{ } 
\end{cases}
\end{equation*}
From Theorem \ref{energyestimate} it follows that $\Vert \omega(t)\Vert_{L^2(M)}=0,$ for all $0\leqslant t\leqslant T.$ Hence, from the continuity in $t$ of the functions we have that $v(t,x)=u(t,x)$ for all $t\in [0,T]$ and a.e. $x\in M.$
\end{proof}

\bibliographystyle{amsplain}

\begin{thebibliography}{99}






\bibitem{ASinger} M. F. Atiyah and I. M. Singer. The index of elliptic operators. I.{\it Ann. of Math. (2)} 87, 484--530, 1968.


\bibitem{bari}
N.~K. Bari.
 Biorthogonal systems and bases in {H}ilbert space.
 { Moskov. Gos. Univ. U\v cenye Zapiski Matematika},
  148(4):69--107, 1951.
  
  \bibitem{Beals} R. Beals. Characterization of pseudodifferential operators and applications. Duke Math. J.,
44(1), 45-57, 1977.

\bibitem{Boutet:Acta}
L.~Boutet~de Monvel.
 Boundary problems for pseudo-differential operators.
 { Acta Math.}, 126(1-2):11--51, 1971.


 
 \bibitem{Buzanonikola}  E. Buzano and F. Nicola.  Complex powers of hypoelliptic pseudodifferential operators. J. Funct.
Anal., 245(2) 353-378, 2007.

\bibitem{CardVishTokRuzI} D. Cardona, V. Kumar, M. Ruzhansky, and N. Tokmagambetov. $L^p$-$L^q$ boundedness of pseudo-differential operators on smooth manifolds and its applications to nonlinear equations.  arXiv:2005.04936. 


\bibitem{CardonaRuzhansky2020} D. Cardona, and  M. Ruzhansky. Subelliptic pseudo-differential operators and Fourier integral operators on compact Lie groups. arXiv:2008.09651.



\bibitem{Carleman}
T.~Carleman.
 Propriet{\'e}s asymptotiques des fonctions fondamentales des
  membranes vibrantes.
 In { C. R. sieme Congr. Math. Scand. (Stockholm, 1934)}, pages
  33--44. Hakan Ohlsson, Lund, 1935.

\bibitem{CSS} S. Coriasco, E. Schrohe and J. Seiler. Bounded imaginary powers of differential operators on
manifolds with conical singularities. Math. Z., 244(2), 235-269, 2003.



\bibitem{DRT2017} J.  Delgado, M. Ruzhansky and  N. Tokmagambetov. Schatten classes, nuclearity and nonharmonic analysis on compact manifolds with boundary. J. Math. Pures Appl. (9) 107 (2017), no. 6, 758–783.



 
 \bibitem{Hormander1985III} L. H\"ormander. { The Analysis of the linear partial differential operators} Vol. III. Springer-Verlag, (1985)

\bibitem{KohnNirenberg1965} J. J. Kohn and L. Nirenberg.   An algebra of pseudo‐differential operators. Commun. Pure and Appl. Math., 18, 269-305, 1965.

\bibitem{KumanoTsutsumi73} H. Kumano-go and C. Tsutsumi. Complex powers of hypoelliptic pseudo-differentia! operators with applications. Osaka J. Math., 10(i), 147-174, 1973. 

\bibitem{Langland}  R. P. Langlands.
Some holomorphic semi-groups
Proc. Natl. Acad. Sci. USA, 46 (1960), 361-363.


\bibitem{Loya2003} P. Loya. Asymptotic properties of the heat kernel on conic manifolds. Israel J. Math., 136 285–306, 2003.

\bibitem{Loya2003a} P. Loya. Complex powers of differential operators on manifolds with conical singularities. J. Anal. Math., 89, 31-56, 2003.


\bibitem{PeterLax}  K. O. Friedrichs.
 Lax, P. D.  Boundary value problems for first order operators. Commun. Pure and Appl. Math., 18, 355-388, 1965.
 
 \bibitem{MeladzeShubin} G. A. Meladze and M. A. Shubin. A functional calculus of pseudodifferential operators on unimodular Lie groups. J Math Sci 47, 2607-2638 (1989).




\bibitem{RT2016} M.  Ruzhansky and N. Tokmagambetov. Nonharmonic analysis of boundary value problems. Int. Math. Res. Not. IMRN 2016, no. 12, 3548--3615. 

\bibitem{RT2017} M. Ruzhansky,  and N.  Tokmagambetov. Nonharmonic analysis of boundary value problems without WZ condition. Math. Model. Nat. Phenom. 12 (2017), no. 1, 115--140.

\bibitem{RT}
M.~Ruzhansky and V.~Turunen.
 { Pseudo-differential operators and symmetries. Background
  analysis and advanced topics}, volume~2 of { Pseudo-Differential
  Operators. Theory and Applications}.
 Birkh{\"a}user Verlag, Basel, 2010.


  \bibitem{Ruz-Tur-Sharp} M. Ruzhansky, V. Turunen. 
Sharp Gårding inequality on compact Lie groups
J. Funct. Anal., 260 (10) (2011),  2881-2901

\bibitem{Ruzhansky-Turunen:IMRN}
M.~Ruzhansky and V.~Turunen.
 Global quantization of pseudo-differential operators on compact {L}ie
  groups, {$\rm SU(2)$}, 3-sphere, and homogeneous spaces.
 { Int. Math. Res. Not. IMRN}, (11):2439--2496, 2013.

\bibitem{Ruzhansky-Turunen-Wirth:JFAA}
M.~Ruzhansky, V.~Turunen and J.~Wirth.
 H{\"o}rmander class of pseudo-differential operators on compact {L}ie
  groups and global hypoellipticity.
 { J. Fourier Anal. Appl.}, 20(3):476--499, 2014.
 

 
 \bibitem{RuzwirthLp} M.~Ruzhansky, J.~Wirth. On multipliers on compact Lie groups. Funct. Anal. Appl., 47 (2013), pp. 72-75
 
 \bibitem{Ruzwirth} M.~Ruzhansky, J.~Wirth. Global functional calculus for operators on compact Lie groups. J. Funct. Anal. 267(1), 144-172, 2014.

\bibitem{Schwartz}
L.~Schwartz.
 Espaces de fonctions diff{\'e}rentiables {\`a} valeurs vectorielles.
 { J. Analyse Math.}, 4:88--148, 1954/55.

\bibitem{Sedletskii-UMN}
A.~M. Sedletski{\u\i}.
 Biorthogonal expansions of functions in exponential series on
  intervals of the real axis.
 { Uspekhi Mat. Nauk}, 37(5(227)):51--95, 248, 1982.

\bibitem{Sedletskii-2}
A.~M. Sedletskii.
 Nonharmonic analysis.
 { J. Math. Sci. (N. Y.)}, 116(5):3551--3619, 2003.
 Functional analysis.

\bibitem{Sedletskii-1}
A.~M. Sedletskii.
 { Nonharmonic analysis}, volume~96 of { Itogi Nauki i Tekhniki.
  Ser. Sovrem. Mat. Pril. Temat. Obz.}, pages 106--211.
 2006.

\bibitem{Sch86} E.  Schrohe. Complex powers of elliptic pseudodifferential operators. Integral Equations Operator
Theory, 9(3), 337-354, 1986.

\bibitem{Sch88} E. Schrohe. Complex powers on noncompact manifolds and manifolds with singularities. Math.
Ann., 281(3), 393-409, 1988.

\bibitem{see:ex}
R.~T. Seeley.
 Integro-differential operators on vector bundles.
 { Trans. Amer. Math. Soc.}, 117:167--204, 1965.


\bibitem{seeley} R. T. Seeley. Complex powers of an elliptic operator. 1967 Singular Integrals (Proc. Sympos. Pure Math., Chicago, Ill., 1966) pp. 288–307 Amer. Math. Soc., Providence, R.I.

\bibitem{see:exp}
R.~T. Seeley.
 Eigenfunction expansions of analytic functions.
 { Proc. Amer. Math. Soc.}, 21:734--738, 1969.

\bibitem{Shubin76}  M. A. Shubin. Pseudodifferential almost-periodic operators and von Neumann algebras.
Trudy Mosk. Mat. Obshch., 35, 103-164, 1976.

\bibitem{Shubin78} M. A. Shubin.  Pseudodifferential Operators and Spectral Theory. Springer, Berlin, Heidelberg (2001). Translated from the Russian by Stig I. Andersson. 



 
 \bibitem{Taylor} M. E. Taylor
Pseudodifferential Operators
Princeton Math. Ser., vol. 34, Princeton University Press, Princeton, NJ (1981).

\bibitem{titc}
E.~C. Titchmarsh.
 The {Z}eros of {C}ertain {I}ntegral {F}unctions.
 { Proc. London Math. Soc.}, 25:283--302, 1926.






\end{thebibliography}

\end{document}